\newtheorem{theorem}{Theorem}[section]
\newtheorem{lemma}[theorem]{Lemma}
\newtheorem{proposition}[theorem]{Proposition}
\newtheorem{conjecture}[theorem]{Conjecture}
\theoremstyle{definition}
\newtheorem{definition}[theorem]{Definition}
\newtheorem{remark}[theorem]{Remark}
\newtheorem{example}[theorem]{Example}
\theoremstyle{remark}
\renewcommand{\theclaim}{\textup{\theclaim}}
\newtheorem*{acknowledgements}{Acknowledgements}
\numberwithin{equation}{section}
\def\openone
\newbox\ipbox
\newcommand{\ip}[2]{\left\langle #1\, , \,#2\right\rangle}
\newcommand{\diracb}[1]{\left\langle #1\mathrel{\mathchoice

{\setbox\ipbox=\hbox{$\displaystyle \left\langle\mathstrut
#1\right.$}

\vrule height\ht\ipbox width0.25pt depth\dp\ipbox}

{\setbox\ipbox=\hbox{$\textstyle \left\langle\mathstrut
#1\right.$}

\vrule height\ht\ipbox width0.25pt depth\dp\ipbox}

{\setbox\ipbox=\hbox{$\scriptstyle \left\langle\mathstrut
#1\right.$}

\vrule height\ht\ipbox width0.25pt depth\dp\ipbox}

{\setbox\ipbox=\hbox{$\scriptscriptstyle \left\langle\mathstrut
#1\right.$}

\vrule height\ht\ipbox width0.25pt depth\dp\ipbox}

}\right. }
\newcommand{\dirack}[1]{\left. \mathrel{\mathchoice

{\setbox\ipbox=\hbox{$\displaystyle \left.\mathstrut
#1\right\rangle$}

\vrule height\ht\ipbox width0.25pt depth\dp\ipbox}

{\setbox\ipbox=\hbox{$\textstyle \left.\mathstrut
#1\right\rangle$}

\vrule height\ht\ipbox width0.25pt depth\dp\ipbox}

{\setbox\ipbox=\hbox{$\scriptstyle \left.\mathstrut
#1\right\rangle$}

\vrule height\ht\ipbox width0.25pt depth\dp\ipbox}

{\setbox\ipbox=\hbox{$\scriptscriptstyle \left.\mathstrut
#1\right\rangle$}

\vrule height\ht\ipbox width0.25pt depth\dp\ipbox}

} #1\right\rangle}
\newcommand{\bz}{\mathbb{Z}}
\newcommand{\M}{\mathcal{M}}
\newcommand{\br}{\mathbb{R}}
\newcommand{\bn}{\mathbb{N}}
\newcommand{\beq}{\begin{equation}}
\newcommand{\eeq}{\end{equation}}
\def\blfootnote{\xdef\@thefnmark{}\@footnotetext}
\renewcommand{\mod}{\operatorname{mod}}
\def\-{^{-1}}
\begin{document}

\title[Spectral measures generated by arbitrary and random convolutions]{Spectral measures generated by arbitrary and random convolutions}
\author{Dorin Ervin Dutkay}

\address{[Dorin Ervin Dutkay] University of Central Florida\\
	Department of Mathematics\\
	4000 Central Florida Blvd.\\
	P.O. Box 161364\\
	Orlando, FL 32816-1364\\
U.S.A.\\} \email{Dorin.Dutkay@ucf.edu}

%

\author{Chun-Kit Lai}

\address{[Chun-Kit Lai]Department of Mathematics, San Francisco State University,
1600 Holloway Avenue, San Francisco, CA 94132.}

 \email{cklai@sfsu.edu}

\thanks{}
\subjclass[2010]{42B10,28A80,42C30}
\keywords{spectral measure, infinite convolution, tile, self-affine}

\begin{abstract}
We study spectral measures generated by infinite convolution products of discrete measures generated by Hadamard triples, and we present sufficient conditions for the measures to be spectral, generalizing a criterion by Strichartz. We then study the spectral measures generated by random convolutions of finite atomic measures and rescaling, where the digits are chosen from a finite collection of digit sets. We show that in dimension one, or in higher dimensions under certain conditions, ``almost all'' such measures generate spectral measures, or, in the case of complete digit sets, translational tiles. Our proofs are based on the study of self-affine spectral measures and tiles generated by Hadamard triples in quasi-product form.

\end{abstract}
\maketitle
\tableofcontents

\section{Introduction}

Let $\mu$ be a compactly supported Borel probability measure on
${\br}^d$ and let $\langle\cdot,\cdot\rangle$ and $\ip{\cdot}{\cdot}_{L^2(\mu)}$ denote respectively the standard inner product on $\br^d$ and $L^2(\mu)$. The measure $\mu$ is called a {\it spectral measure} if there
exists a countable set $\Lambda\subset {\mathbb R}^d$, called {\it spectrum} of the measure $\mu$, such  that the collection of exponential functions
$E(\Lambda): = \{e^{2\pi i \langle\lambda,x\rangle}:
\lambda\in\Lambda\}$ forms an orthonormal basis for $L^2(\mu)$. If we define the Fourier transform of $\mu$  to be
 $$
\widehat{\mu}(\xi)= \int e^{-2\pi i \langle\xi,x\rangle}d\mu(x),
 $$
then $E(\Lambda)$ is an orthonomal basis for $\mu$ if and only if
\begin{enumerate}
\item (Mutual orthogonality) $\widehat{\mu}(\lambda-\lambda')=0$ for all $\lambda\neq\lambda'\in\Lambda$.
\item (Completeness) If $\ip{f}{e^{2\pi i \langle\lambda,x\rangle}}_{L^2(\mu)}=0$ for all $\lambda\in\Lambda$, then $f=0$, $\mu$-a.e.
\end{enumerate}
If only condition (i) is satisfied, then we say that $\Lambda$ is a {\it mutually orthogonal set}. 
\medskip

Classical spectral measures were first introduced by Fuglede \cite{Fug74} when he studied his famous conjecture stating that $\chi_{\Omega}dx$ is a spectral measure if and only if $\Omega$ is a translational tile. Although the conjecture was proven to be false eventually in general \cite{Tao04,MR2237932}, this conjecture has generated a lot of interest (see \cite{LWtiling,LWspectral,IKT,IK,MR1700084,Ko1} and the reference therein) and it is related to the construction of Gabor and wavelet bases \cite{LW,Y}.   The studies entered into the realm of fractals when Jorgensen and Pedersen discovered that some singular fractal measures can also be spectral \cite{JP98}. Since then, singular spectral measures has been an active research topic which involves constructing new examples \cite{LaWa02,str00,DJ07d}, classifying classes of measures which are spectral \cite{MR2435649,Dai} and classifying their possible spectra \cite{DHS09,MR3055992}. It was surprising to find that the convergence of the associated Fourier series is uniform in the space of continuous functions \cite{MR2279556}. All the constructions of singular spectral measures, in the literature, to the best of our knowledge, are based on the Hadamard triple assumption.

\begin{definition}\label{hada}
Let $R\in M_d({\mathbb Z})$ be an $d\times d$ expansive matrix (expansive means that all eigenvalues have modulus strictly greater than 1) with integer entries. Let $B, L\subset{\mathbb Z}^d $ and $0\in B\cap L$ be  finite sets of integer vectors with $N:= \#B=\#L$ ($\#$ denotes the cardinality). We say that the system $(R,B,L)$ forms a {\it Hadamard triple}  if the matrix
\begin{equation}\label{Hadamard triples}
H=\frac{1}{\sqrt{N}}\left[e^{2\pi i \langle R^{-1}b,\ell\rangle}\right]_{\ell\in L, b\in B}
\end{equation}
is unitary, i.e., $H^*H = I$.
\end{definition}
Given a discrete set $A\subset {\mathbb R}^d$, we define the discrete measure on $A$ by
$$
\delta_{A} = \frac{1}{\#A}\sum_{a\in A}\delta_a
$$
where $\delta_a$ is the Dirac mass at $a$. From a direct observation, we can easily see that $(R,B,L)$ forms a Hadamard triple if and only if the discrete measure $\delta_{R^{-1}B}$ is a spectral measure with spectrum $L$. Singular spectral measures have been constructed by  infinite convolutions of these discrete measures. To put it in the most general sense, suppose that we are given a sequence of Hadamard triples $(R_i,B_i,L_i)$, $i=1,2,...$. Then we define
$$
{\bf R}_n = R_n...R_1
$$
 and the probability measure induced by these triples as
\begin{equation}\label{mu}
\mu = \mu(R_i,B_i) = \delta_{{\bf R}_1^{-1}B_1}\ast\delta_{{\bf R}_2^{-1}B_2}\ast...\ast\delta_{{\bf R}_n^{-1}B_n}\ast....,
\end{equation}
assuming the infinite convolution product is weakly convergent to a Borel probability measure.  

It is easy to  show that the measure has an infinite mutually orthogonal set
\begin{equation}\label{Lambda}
\Lambda = L_1+R_1^TL_2+...+(R_1^TR_2^T...R_{n-1}^T)L_n+...
\end{equation}
The spectral property of these measures was first studied by Strichartz \cite{str00}, in which the sequence $\{(R_i,B_i)\}$ was called a {\it compatible tower}, and it has received a lot of attention recently since all measures arising from factorization of Lebesgue measure on $[0,1]^d$ are of this type \cite{GL} and it gives rise to spectral measures with support of arbitrary dimensions \cite{MR3318656}. We also note that if all $R_i  =  R$ and $B_i = B$ for some  expanding matrix $R\in M_d({\mathbb Z})$   and $B\subset{\mathbb R}^d$, then the measure
\begin{equation}\label{eqself-affine}
\mu =\mu_{R,B} = \delta_{{R}^{-1}B}\ast\delta_{{R}^{-2}B}\ast...\ast\delta_{{R}^{-n}B}\ast....
\end{equation}
  is reduced to the {\it self-affine measure} generated by the maps $\tau_b(x) = R^{-1}(x+b)$, see \cite{Hut81}. It was recently proved by the authors,  by suitably modifying $L$, that all self-affine  measures generated by Hadamard triples are spectral measures \cite{DL15,DHL15}. In this paper, we study the spectral property of these arbitrary convolution and then a special case of random convolution with finitely many choices of Hadamard triples, chosen in a random order.

  \medskip

  \noindent{\bf Arbitrary convolutions.}  We first generalize the Strichartz criterion for $\Lambda$ in (\ref{Lambda}) to be a spectrum for $\mu$. For $\mu$ and  $\Lambda$ in (\ref{mu}) and (\ref{Lambda}), we define
  $$
  \mu_n = \delta_{{\bf R}_1^{-1}B_1}\ast\delta_{{\bf R}_2^{-1}B_2}\ast...\ast\delta_{{\bf R}_n^{-1}B_n}, \ \mu_{>n} = \delta_{{\bf R}_{n+1}^{-1}B_{n+1}}\ast\delta_{{\bf R}_{n+2}^{-1}B_{n+2}}\ast...
  $$
  and
  $$
 K_{n} = \left\{\sum_{k=n+1}^{\infty}{\bf R}_k^{-1}b_k: b_k\in B_k \right\},  {\bf B}_n = \left\{\sum_{k=1}^{n}{\bf R}_k^{-1}b_k: b_k\in B_k \right\}.
  $$
 Hence,  $K_0 = \bigcup_{{\bf b}\in {\bf B}_n}({\bf b}_n+K_n)$ and $K_0, B_n, K_n$ are respectively the support of $\mu,\mu_n$ and $\mu_{>n}.$ We will also use the notation $T(\{R_i,B_i\})$ for the support $K_0$ of the measure $\mu$.

We say that $\mu$ satisfies the {\it no overlap condition} if
 $$
 \mu (({\bf b}_n+ K_n)\cap({\bf b}_n'+K_n))=0, \ \mbox{for all} \ {\bf b}_n\neq{\bf b}_n'\in{\bf B}_n, \ \mbox{for all}  \ n\in{\mathbb N}.
 $$
For $\Lambda$ in (\ref{Lambda}),  we also define its $n^{\rm th}$-level approximation.
$$
\Lambda_n = L_1+R_1^TL_2+...+R_1^TR_2^T...R_{n-1}^TL_n.
$$
It is easy to see that $\#{\bf B}_n = \#\Lambda_n = \prod_{i=1}^{n}N_i := {\bf M}_n$.
From this, we consider the following matrices
$$
{\mathcal F}_n = \frac{1}{\sqrt{{\bf M}_n}}\left[|\widehat{\mu_{>n}}(\lambda)|e^{-2\pi i \langle{\bf b}, \lambda\rangle}\right]_{\lambda\in \Lambda_n,{\bf b}\in {\bf B}_n}.
$$
Recall that the {\it singular values} of ${\mathcal F}_n$ are the eigenvalues of ${\mathcal F}_n^{\ast}{\mathcal F}_n$ and we denote by $\sigma({\mathcal F}_n)$ the set of all singular values of ${\mathcal F}_n.$
\medskip

Our first main result is as follows:
\begin{theorem}\label{th1.1}
Suppose that the measure $\mu$ in (\ref{mu}) satisfies the no-overlap condition and that $\mu$ is compactly supported.  If $\inf_n\min\sigma({\mathcal F}_n)>0$, then $\Lambda$ is a spectrum for $\mu$.
\medskip

In particular, if $\inf_n\inf_{\lambda\in\Lambda_n}|\widehat{\mu_{>n}}(\lambda)|>0$, then $\Lambda$ is a spectrum for $\mu$.
\end{theorem}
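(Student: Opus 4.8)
The plan is to take the mutual orthogonality of $E(\Lambda)$ as given (it is the content of $\Lambda$ in \eqref{Lambda} being a mutually orthogonal set) and to prove the one remaining requirement, completeness, in the guise of a lower frame bound. Write $e_\eta(x)=e^{2\pi i\langle\eta,x\rangle}$, set $\delta:=\inf_n\min\sigma({\mathcal F}_n)>0$, and let $P$ be the orthogonal projection of $L^2(\mu)$ onto $V=\overline{\operatorname{span}}\,E(\Lambda)$. Since $E(\Lambda)$ is orthonormal, Parseval for the basis of $V$ gives $\|Pf\|^2=\sum_{\lambda\in\Lambda}|\langle f,e_\lambda\rangle_{L^2(\mu)}|^2$ for every $f$, and in particular $\|Pe_{-\eta}\|^2=\sum_{\lambda\in\Lambda}|\widehat\mu(\eta+\lambda)|^2=:Q(\eta)$. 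Because $\{e_{-\eta}:\eta\in\mathbb R^d\}$ has dense linear span in $L^2(\mu)$ and $P$ is continuous, a uniform estimate $Q(\eta)\ge\delta^2$ for all $\eta$ upgrades to $\|Pf\|^2\ge\delta^2\|f\|^2$ for all $f$, which forces $P=I$ and hence completeness. So the entire theorem reduces to showing $Q(\eta)\ge\delta^2$ for every $\eta$.

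To produce this bound I would truncate at level $n$. Since $0\in L_i$ we have $\Lambda_n\uparrow\Lambda$, so $Q(\eta)=\lim_n Q_n(\eta)$ with $Q_n(\eta)=\sum_{\lambda\in\Lambda_n}|\widehat\mu(\eta+\lambda)|^2$, and it suffices to bound each $Q_n$ from below modulo an error that vanishes as $n\to\infty$. Factoring $\widehat\mu=\widehat{\mu_n}\,\widehat{\mu_{>n}}$, the matrix ${\mathcal F}_n$ is exactly the device that controls the companion sum $\widetilde Q_n(\eta):=\sum_{\lambda\in\Lambda_n}|\widehat{\mu_{>n}}(\lambda)|^2\,|\widehat{\mu_n}(\eta+\lambda)|^2$, in which the tail modulus is frozen at $\lambda$ rather than at $\eta+\lambda$. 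Indeed, with the unimodular vector $u(\eta)=(e^{-2\pi i\langle\eta,{\bf b}\rangle})_{{\bf b}\in{\bf B}_n}$ one checks $\widehat{\mu_n}(\eta+\lambda)=\frac{1}{\sqrt{{\bf M}_n}}(H_n u(\eta))_\lambda$, where $H_n=\frac{1}{\sqrt{{\bf M}_n}}[e^{-2\pi i\langle{\bf b},\lambda\rangle}]_{\lambda,{\bf b}}$; since ${\mathcal F}_n=D_nH_n$ with $D_n=\operatorname{diag}(|\widehat{\mu_{>n}}(\lambda)|)$, it follows that $\widetilde Q_n(\eta)=\frac{1}{{\bf M}_n}\|{\mathcal F}_n u(\eta)\|^2$. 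Using $\|{\mathcal F}_n u\|^2\ge(\min\sigma({\mathcal F}_n))^2\|u\|^2$ together with $\|u(\eta)\|^2={\bf M}_n$ gives $\widetilde Q_n(\eta)\ge(\min\sigma({\mathcal F}_n))^2\ge\delta^2$ for all $\eta$ and $n$. This single inequality is where the hypothesis is consumed, and it is what makes the singular-value formulation the natural one.

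It remains to compare $Q_n$ with $\widetilde Q_n$, which differ only by replacing $|\widehat{\mu_{>n}}(\eta+\lambda)|^2$ with $|\widehat{\mu_{>n}}(\lambda)|^2$. Since $\mu_{>n}$ is supported on $K_n$ and $|\widehat{\mu_{>n}}|\le1$, one has $\big||\widehat{\mu_{>n}}(\eta+\lambda)|^2-|\widehat{\mu_{>n}}(\lambda)|^2\big|\le 2|\widehat{\mu_{>n}}(\eta+\lambda)-\widehat{\mu_{>n}}(\lambda)|\le 4\pi|\eta|\sup_{y\in K_n}|y|$, and compact support of $\mu$ together with the contraction of the ${\bf R}_k^{-1}$ forces $\sup_{y\in K_n}|y|\to0$. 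Multiplying this uniform-in-$\lambda$ error by the finite Parseval identity $\sum_{\lambda\in\Lambda_n}|\widehat{\mu_n}(\eta+\lambda)|^2=1$ yields $|Q_n(\eta)-\widetilde Q_n(\eta)|\to0$ for each fixed $\eta$. Hence $Q(\eta)\ge Q_n(\eta)\ge\widetilde Q_n(\eta)-o(1)\ge\delta^2-o(1)$, and letting $n\to\infty$ gives $Q(\eta)\ge\delta^2$, completing the reduction of the first paragraph. The finite Parseval identity just used is the unitarity of $H_n$, which I would derive from the fact that each $(R_i,B_i,L_i)$ is a Hadamard triple (their composition makes $\Lambda_n$ an orthonormal spectrum for the ${\bf M}_n$-atom measure $\mu_n=\delta_{{\bf B}_n}$, the no-overlap condition providing the geometric partition $\{{\bf b}+K_n\}_{{\bf b}\in{\bf B}_n}$ that underlies this decomposition). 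Once $H_n$ is unitary, $\sigma({\mathcal F}_n)=\{|\widehat{\mu_{>n}}(\lambda)|:\lambda\in\Lambda_n\}$, so $\min\sigma({\mathcal F}_n)=\min_{\lambda\in\Lambda_n}|\widehat{\mu_{>n}}(\lambda)|$, which is exactly the ``in particular'' clause.

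The step I expect to be the main obstacle is precisely the weight-freezing comparison: transferring the clean matrix lower bound on $\widetilde Q_n$ (tail evaluated at $\lambda$) to the genuine spectral sum $Q_n$ (tail evaluated at $\eta+\lambda$). It is this transfer that pulls the compact-support hypothesis and the shrinking of $K_n$ into the argument, and it is also the place where the finite-level orthogonality (unitarity of $H_n$) must be established carefully so that the error is multiplied against the constant $1$ rather than an uncontrolled quantity.
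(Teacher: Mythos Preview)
Your reduction in the first paragraph has a genuine gap: the inequality $\|Pe_{-\eta}\|^2\ge\delta^2$ for every $\eta$ does \emph{not} upgrade to $\|Pf\|^2\ge\delta^2\|f\|^2$ for all $f$, even though $\{e_{-\eta}\}$ has dense linear span. A lower bound on an orthogonal projection propagates by continuity from a dense \emph{subspace}, not from a mere spanning set. Concretely, if $V^\perp\neq\{0\}$, your bound only asserts that no pure exponential lies in (or makes too large an angle with) $V^\perp$; this is far weaker than $V^\perp=\{0\}$. For a toy picture: in $\br^3$ let $P$ project onto the $x$-axis; the vectors $v_\theta=(\cos\theta,\sin\theta,0)$ for $|\theta|<\pi/4$ all satisfy $\|Pv_\theta\|\ge\tfrac{1}{\sqrt2}\|v_\theta\|$ and their span is the whole $xy$-plane, yet the bound fails for $e_2$. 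Consequently, proving $Q(\eta)\ge\delta^2$ for all $\eta$ is not by itself enough to conclude completeness, and the rest of your argument (the freezing comparison $Q_n\approx\widetilde Q_n$) only feeds into this insufficient conclusion.

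The paper sidesteps this difficulty by testing not against exponentials but against the step functions $\mathcal S_n=\bigl\{\sum_{\mathbf b\in\mathbf B_n}w_{\mathbf b}\mathbf 1_{K_{\mathbf b}}\bigr\}$, which form an increasing chain of finite-dimensional \emph{subspaces} whose union $\mathcal S$ is dense (this is where compact support is used). For such $f$ the no-overlap condition yields the exact identity
\[
\langle f,e_\lambda\rangle_{L^2(\mu)}=\frac{1}{\mathbf M_n}\,\widehat{\mu_{>n}}(\lambda)\sum_{\mathbf b\in\mathbf B_n}w_{\mathbf b}\,e^{-2\pi i\langle\mathbf b,\lambda\rangle},
\]
with the tail evaluated at $\lambda$ rather than at $\eta+\lambda$; hence $\sum_{\lambda\in\Lambda_n}|\langle f,e_\lambda\rangle|^2=\tfrac{1}{\mathbf M_n}\|\mathcal F_n\mathbf w\|^2$ \emph{exactly}, with no freezing approximation needed. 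The singular-value hypothesis then gives the lower frame bound $\sum_{\lambda\in\Lambda}|\langle f,e_\lambda\rangle|^2\ge\sigma\|f\|_{L^2(\mu)}^2$ on all of $\mathcal S$, and \emph{this} does extend to $L^2(\mu)$ by density. Your matrix identity $\widetilde Q_n=\tfrac{1}{\mathbf M_n}\|\mathcal F_n u(\eta)\|^2$ and the derivation of the ``in particular'' clause via $\mathcal F_n=D_nH_n$ with $H_n$ unitary are correct and match the paper; the error is that you applied $\mathcal F_n$ to the wrong family of test vectors. Switching from $e_{-\eta}$ to step functions both repairs the gap and eliminates the weight-freezing step you flagged as the main obstacle.
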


We remark that the assumption that  the measure $\mu$ is compactly supported ensures that the family of step functions is dense in $L^2(\mu)$ and the no-overlap condition is also necessary to ensure that  $\mu(K_{n}) = 1/{\bf M}_n$. In fact, if the no-overlap condition is not satisfied, $\mu$ can be non-spectral (see Example \ref{ex1.5}).

\medskip

\noindent{\bf Random Convolutions.} In the second part of the paper, we consider $R_i =R$ for all $i$ with $R$ is a fixed integral expanding matrix. Let also $B(1),....,B(N)$ be a finite collection of sets in ${\mathbb R}^d$, with $0\in B(i)$ and $\#B(i) = M (\le |\det R|)$, for all $i$, so that $(R,B(i),L)$ form Hadamard triples for all $i$.  Note that the set $L$ is the same for all $i$.

Let $\omega=\omega_1\omega_2\dots$ be an infinite word in $\{1,\dots, N\}^{\bn}$. The measure $\mu$ in (\ref{mu}) is now read as a random convolution of discrete measures scaled by $R$.
\begin{equation}\label{mu_omega}
\mu_\omega=\mu(\omega,R):=\delta_{R^{-1}B(\omega_1)}*\delta_{R^{-2}B(\omega_2)}*\dots.
\end{equation}
Some special cases of these measures were studied by He et al. \cite{AHLau,AHL}. We will see that spectral measures exist in abundance in the setting of random convolutions. To be precise, we treat $\omega_n$ as independent random variables with values $1,...,N$, with equal probability $1/N$, and ${\mathbb P}$ is the product probability on $\{1,...,N\}^{\mathbb N}$. i.e.
\begin{equation}\label{P}
 {\mathbb P} (\omega_{1}= i_1,...,\omega_{k}= i_k) = \frac{1}{N^k}, \ \forall k\in{\mathbb N}, \ i_1,..,i_k\in\{1,...,N\}.
 \end{equation}
 The main important observation is that measures in (\ref{mu_omega}) can be put together in the fibres of the self-affine measures generated by a Hadamard triple in quasi-product form.

\begin{definition}\label{def1.5} Given the Hadamard triples $(R,B(i),L)$, $i=1,...,N$ and $\#B(i) = M$. We associate the matrix ${\bf R}$ and the sets ${\bf B}$ and ${\bf L}$ with the following form:
\beq
 {\bf R}=\begin{bmatrix} R_1&0\\ C &R\end{bmatrix} ,
\eeq
where $R_1 \in {M}_r(\bz)$, $R \in {M}_{d} (\bz)$ and $C\in M_{d, r}(\bz)$. Let
\beq
{\bf B} = \left\{\ \begin{bmatrix}a_i \\ d_{i,j} \end{bmatrix} : i \in \{1,...,N \}, d_{i,j} \in B(i) \right\}\,
\eeq
where $a_i \in \bz^r$, $d_{i,j} \in \bz^{d}$ and $a_1=d_{i,1}=0$, for all $i,j$.

Suppose ${\bf L}= L_1 \times L$ with $L_1 \subset \bz^r$, $L \subset \bz^{d}$ and $(R_1, B_1 := \{ a_i : 1\leq i\leq N \} , L_1 )$ is a Hadamard triple. Then we say that $({\bf R},{\bf B},{\bf L})$ is in  {\it quasi-product form} on ${\mathbb R}^{d+r}$ associated with  $(R,B(i),L)$, $i=1,...,N$. The self-affine measure associated with $({\bf R},{\bf B},{\bf L})$ is the measure defined by
$$
\mu_{{\bf R},{\bf B}} = \delta_{{\bf R}^{-1}{\bf B}}\ast\delta_{{\bf R}^{-2}{\bf B}}\ast...\ast\delta_{{\bf R}^{-n}{\bf B}}\ast....
$$
We denote also by  $\mu_1$  the self-affine measure associated with  $(R_1, B_1)$ defined in (\ref{eqself-affine}).

\end{definition}

\medskip

\begin{theorem}\label{pr1.3}
 Let $(R,B(i),L)$, $i=1,...,N$, be the Hadamard triples and $({\bf R},{\bf B},{\bf L})$ be the triple in quasi-product form associated with $(R,B(i),L)$ in Definition \ref{def1.5}. Assume $\Lambda_1$ is a spectrum for $\mu_1$ and let $\Lambda_2$ be a subset of $\br^{d}$. Then $\Lambda_1\times \Lambda_2$ is a spectrum for $\mu_{{\bf R},\bf B}$ if and only if $\Lambda_2$ is a spectrum for $\mu_{\omega}$, ${\mathbb P}$-almost surely.
\end{theorem}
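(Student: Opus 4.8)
The plan is to realize $\mu_{\mathbf{R},\mathbf{B}}$ as a skew product over $\mu_1$ whose fibres are translates of the random measures $\mu_\omega$, and then to read off the spectral property fibrewise through a direct-integral decomposition of $L^2(\mu_{\mathbf{R},\mathbf{B}})$. The first step is to establish the disintegration. Since $\mathbf{R}$ is block lower-triangular, $\mathbf{R}^{-n}=\begin{bmatrix}R_1^{-n}&0\\ C_n&R^{-n}\end{bmatrix}$ for suitable matrices $C_n$, so a point $\sum_{n}\mathbf{R}^{-n}\mathbf{b}_n$ of the support, with $\mathbf{b}_n=\begin{bmatrix}a_{i_n}\\ d_{i_n,j_n}\end{bmatrix}$, has first coordinate $x(\omega)=\sum_n R_1^{-n}a_{i_n}$ depending only on the address $\omega=(i_1,i_2,\dots)$, and second coordinate $t(\omega)+\sum_n R^{-n}d_{i_n,j_n}$, where $t(\omega)=\sum_n C_n a_{i_n}$ depends only on $\omega$ and the second sum is a point of $\operatorname{supp}(\mu_\omega)$. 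Evaluating the infinite product defining $\widehat{\mu_{\mathbf{R},\mathbf{B}}}$ at $(\xi_1,0)$ collapses the fibre digits and gives $\widehat{\mu_{\mathbf{R},\mathbf{B}}}(\xi_1,0)=\widehat{\mu_1}(\xi_1)$, so the pushforward of $\mu_{\mathbf{R},\mathbf{B}}$ under the first projection is $\mu_1$. Using the no-overlap property of the Hadamard-triple self-affine measure $\mu_1$, the address map $\omega\mapsto x(\omega)$ is a measure isomorphism of $(\Omega,\mathbb{P})$, $\Omega:=\{1,\dots,N\}^{\mathbb{N}}$, onto $(\operatorname{supp}\mu_1,\mu_1)$ (each digit $a_i$ carries weight $1/N$, matching (\ref{P})). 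By uniqueness of disintegration against the self-similarity of $\mu_{\mathbf{R},\mathbf{B}}$, I would identify $\mu_{\mathbf{R},\mathbf{B}}=\int_\Omega \delta_{x(\omega)}\times(\mu_\omega+t(\omega))\,d\mathbb{P}(\omega)$. This produces a unitary $L^2(\mu_{\mathbf{R},\mathbf{B}})\cong\int^{\oplus}_\Omega L^2(\mu_\omega)\,d\mathbb{P}(\omega)$ under which $e_{\lambda_1,\lambda_2}$ becomes the field $\omega\mapsto c_{\lambda_1,\lambda_2}(\omega)\,e^\omega_{\lambda_2}$, with unimodular scalar $c_{\lambda_1,\lambda_2}(\omega)=e^{2\pi i\langle\lambda_1,x(\omega)\rangle+2\pi i\langle\lambda_2,t(\omega)\rangle}$ and $e^\omega_{\lambda_2}(y)=e^{2\pi i\langle\lambda_2,y\rangle}\in L^2(\mu_\omega)$; note $\langle e^\omega_{\lambda_2},e^\omega_{\lambda_2'}\rangle_{L^2(\mu_\omega)}=\widehat{\mu_\omega}(\lambda_2'-\lambda_2)$, and ``being a spectrum'' is insensitive to the fibre translation $t(\omega)$.

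With this in hand both halves of the equivalence follow by the same mechanism, using that $\{e^{(1)}_{\lambda_1}\}_{\lambda_1\in\Lambda_1}$, $e^{(1)}_{\lambda_1}(\omega):=e^{2\pi i\langle\lambda_1,x(\omega)\rangle}$, is an orthonormal basis of $L^2(\Omega,\mathbb{P})$ because $\Lambda_1$ is a spectrum for $\mu_1$. For orthogonality, fix $\lambda_2\ne\lambda_2'$ in $\Lambda_2$ and set $g(\omega)=e^{2\pi i\langle\lambda_2-\lambda_2',t(\omega)\rangle}\widehat{\mu_\omega}(\lambda_2'-\lambda_2)$; the inner products $\langle e_{\lambda_1,\lambda_2},e_{\lambda_1',\lambda_2'}\rangle_{L^2(\mu_{\mathbf{R},\mathbf{B}})}$ are exactly the matrix entries $\langle M_g e^{(1)}_{\lambda_1},e^{(1)}_{\lambda_1'}\rangle$ of the multiplication operator $M_g$ in this basis, so they all vanish iff $M_g=0$, i.e. iff $\widehat{\mu_\omega}(\lambda_2'-\lambda_2)=0$ $\mathbb{P}$-a.s. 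The differences with $\lambda_2=\lambda_2'$ reduce to $\widehat{\mu_1}$ and are automatic. Intersecting over the countably many pairs shows that $\Lambda_1\times\Lambda_2$ is mutually orthogonal for $\mu_{\mathbf{R},\mathbf{B}}$ iff $\Lambda_2$ is mutually orthogonal for $\mu_\omega$ almost surely. For completeness, a field $(f_\omega)$ orthogonal to every $e_{\lambda_1,\lambda_2}$ forces, for each fixed $\lambda_2$, that $\omega\mapsto e^{-2\pi i\langle\lambda_2,t(\omega)\rangle}\langle f_\omega,e^\omega_{\lambda_2}\rangle$ is orthogonal to the basis $\{e^{(1)}_{\lambda_1}\}$, hence vanishes a.s.; thus a.s. $f_\omega\perp e^\omega_{\lambda_2}$ for all $\lambda_2\in\Lambda_2$, so if $\Lambda_2$ is complete a.s. then $f_\omega=0$ a.s. and $\Lambda_1\times\Lambda_2$ is complete. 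Conversely, if $\Lambda_2$ fails to be complete for $\mu_\omega$ on a set of positive $\mathbb{P}$-measure, a von Neumann measurable selection of nonzero vectors in the fibrewise orthogonal complements of $E(\Lambda_2)$ assembles a nonzero field orthogonal to all $e_{\lambda_1,\lambda_2}$, contradicting completeness. Combining the orthogonality and completeness equivalences yields the theorem.

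The main obstacle is the structural first step: proving rigorously that the disintegration fibres are exactly translates of $\mu_\omega$ and that the base is $(\Omega,\mathbb{P})$ through the address map. This rests on the no-overlap property of $\mu_1$ (which makes the address map an a.e. bijection, hence $L^2(\mu_1)\cong L^2(\Omega,\mathbb{P})$) and on correctly tracking the coupling term $t(\omega)=\sum_n C_n a_{i_n}$ created by the off-diagonal block $C$ of $\mathbf{R}$; fortunately $t(\omega)$ enters only through the unimodular factors $c_{\lambda_1,\lambda_2}(\omega)$ and is therefore invisible to the spectral conclusions. A secondary technical point is the measurable-selection argument in the forward completeness direction, which is routine once one has a measurable field of projections onto the fibrewise orthogonal complements of $E(\Lambda_2)$.
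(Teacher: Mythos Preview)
Your argument is correct and rests on the same structural backbone as the paper's: both pivot on the disintegration of $\mu_{\mathbf R,\mathbf B}$ over $\mu_1$ with fibres that are translates of $\mu_\omega$ (the paper records this as Lemma~\ref{lem1.23}, and the Parseval identity you use in the base is its Lemma~\ref{lem1.24}). The difference is in the analytic wrapper. The paper stays on the Fourier side and uses the Jorgensen--Pedersen $Q$-function criterion (Lemma~\ref{JP}): for the forward implication it computes $\sum_{\lambda_1,\lambda_2}|\widehat{\mu_{\mathbf R,\mathbf B}}(x+\lambda_1,y+\lambda_2)|^2=1$ directly by Parseval in the $\Lambda_1$-variable, and for the converse it shows, for each fixed $y$, that $\sum_{\lambda_2}|\widehat{\mu_t^2}(y+\lambda_2)|^2=1$ for $\mu_1$-a.e.\ $t$, and then invokes the entire-function regularity of this partial $Q$-function to swap quantifiers and conclude for all $y$. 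Your direct-integral formulation reaches the same endpoint but replaces the $Q$-function calculation by operator-theoretic bookkeeping, and in the converse completeness step trades the continuity/density argument for a measurable selection of nonzero vectors in the fibrewise orthocomplements. The paper's route is slightly more elementary (no selection theorem, no direct-integral formalism), while yours makes the Hilbert-space mechanism behind the equivalence more transparent; both are valid, and your handling of the coupling term $t(\omega)$ coming from the off-diagonal block $C$ is exactly right---it contributes only unimodular factors and is invisible to both orthogonality and completeness.
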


\medskip

With the theorem above, we will construct a spectrum of the form $\Lambda_1\times \Lambda_2$ for $\mu_{{\bf R},\bf B}$ in some associated quasi-product form. Under two different assumptions, we have the following conclusion:

\begin{theorem}\label{th1.4}
 Let $(R,B(i),L)$, $i=1,...,N$, be the Hadamard triples. Assume that one of the following condition holds:
 \begin{enumerate}
 \item the Hadamard triples $(R,B(i),L)$ are on ${\mathbb R}^1$, i.e. $R$ is an integer.
 \item Each $B(i)$ is a complete set representative of $R$
  \end{enumerate}
  Then there exists a set $\Lambda$ such that $\Lambda$ is a spectrum for $\mu_{\omega}$, for ${\mathbb P}$-almost every $\omega$.
	
\medskip

	Moreover, in the case (ii), there exists a lattice $\tilde \Gamma$ such that
	the support $T(\{R,B(i_k)\}_k)$ of the measure $\mu_\omega$ tiles $\br^d$ by $\tilde\Gamma$ and $\mu_\omega$ is the normalized Lebesgue measure on $T(\{R,B(i_k)\})$, for $\mathbb P$-almost every $\omega=(i_1i_2\dots)$.
\end{theorem}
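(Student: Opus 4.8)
The plan is to derive both assertions from Theorem \ref{pr1.3} by placing the whole random family $\{\mu_\omega\}$ inside a single self-affine measure in quasi-product form and then transferring a product spectrum. First I would fix the base block: take $r=1$, $R_1=N$, $B_1=L_1=\{0,1,\dots,N-1\}$, $a_i=i-1$, so that $(R_1,B_1,L_1)$ is the standard one-dimensional Hadamard triple, $\mu_1$ is Lebesgue measure on $[0,1]$, and $\Lambda_1=\mathbb{Z}$ is a lattice spectrum for $\mu_1$. With $C=0$ set $\mathbf R=\operatorname{diag}(N,R)$, $\mathbf B=\{(a_i,d):i,\ d\in B(i)\}$ and $\mathbf L=L_1\times L$. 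A direct computation shows $(\mathbf R,\mathbf B,\mathbf L)$ is a Hadamard triple: in the unitarity check the off-diagonal blocks ($i\neq i'$) vanish because $\sum_{\ell_1\in L_1}e^{2\pi i(a_i-a_{i'})\ell_1/N}=0$, while the diagonal blocks ($i=i'$) are unitary precisely because each $(R,B(i),L)$ is a Hadamard triple. By construction the first coordinate of $\mu_{\mathbf R,\mathbf B}$ is $\mu_1$, and its fibre over the base point with $N$-adic address $\omega$ is exactly $\mu_\omega$ (the $N$-adic digits of a Lebesgue-random point being i.i.d.\ uniform, matching $\mathbb P$); this is the geometry behind Theorem \ref{pr1.3}. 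Hence, to produce a single spectrum $\Lambda$ valid for $\mathbb P$-a.e.\ $\omega$ it suffices to exhibit a spectrum for $\mu_{\mathbf R,\mathbf B}$ of product form $\Lambda_1\times\Lambda_2$, and then take $\Lambda:=\Lambda_2$.

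Second, I would establish such a product spectrum. Since $(\mathbf R,\mathbf B,\mathbf L)$ is a Hadamard triple, $\mu_{\mathbf R,\mathbf B}$ is spectral by \cite{DL15,DHL15}; the real task is to choose a spectrum of the form $\mathbb Z\times\Lambda_2$. I would adapt the completeness argument of \cite{DL15,DHL15}: one builds a candidate orthonormal family by the maximal-tree / extreme-cycle construction and verifies the Jorgensen--Pedersen identity $\sum_{\gamma}|\widehat{\mu_{\mathbf R,\mathbf B}}(\xi+\gamma)|^2\equiv 1$. Because $C=0$ makes $\mathbf R^{T}$ block-diagonal and $\mathbf L=L_1\times L$ is already a product, the candidate set can be kept in the product form $\mathbb Z\times\Lambda_2$, with $\mathbb Z$ the (modified) spectrum of $\mu_1$ and $\Lambda_2$ built from a suitable modification of the tower $L+R^{T}L+(R^{T})^{2}L+\dots$, which is common to all fibres. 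The crux is completeness: one must show the Jorgensen--Pedersen sum does not drop below $1$, uniformly, and here the coupling hidden in $\mathbf B$ (which records the random choice of digit set at each level) has to be controlled. This is exactly where the two hypotheses enter: in case (i) the fibre block is one-dimensional, so the required equi-positivity lower bound for $\widehat{\mu_\omega}$ can be obtained explicitly and uniformly in $\omega$, while in case (ii) the fibres are Lebesgue measures on tiles, for which the estimate is automatic.

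Third, for the tiling statement in case (ii) I would arrange $\mathbf B$ to be a complete set of representatives for $\mathbf R$: since $B_1=\{0,\dots,N-1\}$ is complete for $R_1=N$ and each $B(i)$ is complete for $R$, the block structure of $\mathbf R$ shows the digits are distinct modulo $\mathbf R\mathbb Z^{d+1}$, and $\#\mathbf B=NM=|\det\mathbf R|$. Thus $\mu_{\mathbf R,\mathbf B}$ is the normalized Lebesgue measure on an integral self-affine tile $\mathbf T\subset\mathbb R^{d+1}$, which tiles $\mathbb R^{d+1}$ by a lattice; one takes the product spectrum of the previous step to be a lattice $\mathbb Z\times\Lambda_2$ with $\Lambda_2$ a lattice in $\mathbb R^d$. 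On the fibre side, the complete-representative condition forces the level-$n$ pieces of $\mu_\omega$ to tile without overlap, so $\mu_\omega$ is the normalized Lebesgue measure on its support $T(\{R,B(i_k)\})$ for $\mathbb P$-a.e.\ $\omega$, the common value of the total mass being a.s.\ constant by ergodicity of the shift. Since $\Lambda_2$ is then a lattice spectrum for $L^2$ of Lebesgue measure on $T(\{R,B(i_k)\})$, the lattice direction of Fuglede's theorem (valid unconditionally, by Poisson summation) gives that $T(\{R,B(i_k)\})$ tiles $\mathbb R^d$ by the dual lattice $\tilde\Gamma:=\Lambda_2^{*}$; as $\Lambda_2$ is independent of $\omega$, the lattice $\tilde\Gamma$ is the same for $\mathbb P$-a.e.\ $\omega$.

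The main obstacle is the completeness step: upgrading the bare spectrality of $\mu_{\mathbf R,\mathbf B}$ to the existence of a \emph{product} spectrum, and in case (ii) a product \emph{lattice} spectrum. Everything else is either a finite Hadamard computation, the fibering already packaged in Theorem \ref{pr1.3}, the elementary no-overlap/Lebesgue argument, or the classical lattice case of Fuglede's theorem. The delicate point is that the randomness sits in $\mathbf B$ rather than in $\mathbf R$ or $\mathbf L$, so the candidate spectrum remains product-shaped, but the uniform Fourier lower bound guaranteeing completeness must be established uniformly across the fibres; the two hypotheses (dimension one, or complete representatives) are precisely the regimes where this uniformity can be reached.
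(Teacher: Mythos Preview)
Your overall architecture---embed the random family into a self-affine measure in quasi-product form and invoke Theorem~\ref{pr1.3}---is exactly the paper's strategy, and your treatment of case~(ii) is essentially correct and matches the paper: $T(\mathbf R,\mathbf B)$ is an integral self-affine tile, it tiles by a lattice $\bz\times\tilde\Gamma$, hence has spectrum $\bz\times\Gamma$, and the tiling statement for the fibres follows from the lattice case of Fuglede (the paper packages this as Lemma~\ref{lem3.11}).

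The genuine gap is in case~(i). Your completeness argument rests on obtaining ``the required equi-positivity lower bound for $\widehat{\mu_\omega}$ \dots\ explicitly and uniformly in $\omega$''. No such uniform bound can exist: Example~\ref{ex1.5} in the paper exhibits a choice of $\omega$ for which $\mu_\omega$ is not spectral at all, so any Strichartz-type estimate on the fibres must fail on a nonempty (measure-zero) set of $\omega$, and you give no mechanism to isolate the good $\omega$'s. The vaguer fallback---``adapt the completeness argument of \cite{DL15,DHL15}''---is not a proof; the authors of those papers do \emph{not} proceed that way here, which is a strong hint that a direct adaptation is not available.

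What the paper actually does in case~(i) is entirely different and is the missing idea. It proves that the quasi-product triple $(\mathbf R,\mathbf B,\mathbf L)$ is \emph{dynamically simple} (Definition~\ref{definv2}), so that the spectrum generated by extreme cycles is automatically a product $\Lambda_1\times\Lambda_2$ (Theorem~\ref{th1.5}). Establishing dynamical simplicity requires the structure theorem for minimal compact invariant sets (Theorem~\ref{thccr}): one must rule out minimal invariant sets supported on nontrivial $\mathbf R^T$-invariant subspaces $V$. To make that case analysis tractable the paper does \emph{not} take $R_1=N$ as you do, but chooses $R_1=pN$ with $pN\neq R$; this forces the only proper $\mathbf R^T$-invariant subspaces to be the coordinate axes, and a delicate argument then eliminates $V=\br\times\{0\}$ and shows $V=\{0\}\times\br$ can occur only when all $B(i)$ coincide. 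Without this dynamical-systems layer (and the $pN\neq R$ trick), the product-spectrum step in case~(i) is unproved.
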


\begin{definition}
We say that a Lebesgue measurable set $T$ tiles $\br^d$ by a set $\mathcal T$, if $(T+t)_{t\in\mathcal T}$ is a partition of $\br^d$, up to Lebesgue measure zero.
\end{definition}
\medskip
 The proof for the first case involves one of the canonical spectra in spectral measures theory. These are studied in \cite{DJ06,DJ07d,DJ09}. We call it here the {\it dynamically simple spectra} (Definition \ref{definv2}). We will summarize this in a separate study in the appendix of this paper. Theorem \ref{th1.4} perhaps hints towards a conjecture about random convolutions.

\begin{conjecture}
Let $(R,B(i),L)$, $i=1,...,N$, be the Hadamard triples on ${\mathbb R}^d$.   Then some associated quasi-product form admits a spectrum of the form $\Lambda_1\times\Lambda_2$ and hence $\Lambda_2$ is a spectrum for $\mu_{\omega}$, ${\mathbb P}$-almost surely.
\end{conjecture}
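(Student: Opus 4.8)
The strategy is to derive both assertions from Theorem \ref{pr1.3}. In each case I construct a Hadamard triple $({\bf R},{\bf B},{\bf L})$ in quasi-product form associated with $(R,B(i),L)$ and exhibit a product spectrum $\Lambda_1\times\Lambda_2$ for $\mu_{{\bf R},{\bf B}}$ in which $\Lambda_1$ is a spectrum for $\mu_1$; Theorem \ref{pr1.3} then delivers $\Lambda:=\Lambda_2$ as a spectrum for $\mu_\omega$ for $\mathbb P$-almost every $\omega$. For the base triple I set $N=\#B_1$ and choose $a_1=0,\dots,a_N$ and $L_1$ so that $(R_1,B_1,L_1)$ is Hadamard, and I take the shear $C=0$, so that ${\bf R}=\begin{bmatrix}R_1&0\\0&R\end{bmatrix}$ is block diagonal. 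With $C=0$ the first coordinate of $\mu_{{\bf R},{\bf B}}$ records $\omega$ and the fibre above it is $\supp{\mu_\omega}=T(\{R,B(i_k)\})$, so the fibre structure underlying Theorem \ref{pr1.3} is transparent; the sought spectrum $\Lambda_2$ will be a single, $\omega$-independent set attached to the common set $L$.

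For case (i) we have $d=1$ and the measures $\mu_\omega$ need not be absolutely continuous, so tiling arguments are unavailable in general and I instead use the dynamically simple spectra of Definition \ref{definv2}. I take $R_1=N$, $B_1=\{0,\dots,N-1\}$ and $L_1=\{0,\dots,N-1\}$, so that $\mu_1$ is Lebesgue measure on $[0,1]$ with spectrum $\Lambda_1=\mathbb Z$. Because ${\bf R}$ is block diagonal, the dual dynamics determining the dynamically simple spectrum of $({\bf R},{\bf B},{\bf L})$ decouples, and that spectrum is the product $\Lambda_1\times\Lambda_2$ of the dynamically simple spectra of the two factors. It then remains to prove that this product orthonormal family is complete, i.e.\ a basis of $L^2(\mu_{{\bf R},{\bf B}})$; this I would establish with the one-dimensional dynamical simplicity criterion developed in the appendix.

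For case (ii) each $B(i)$ is a complete set of coset representatives of $R$; I also take $B_1$ to be a complete set of representatives of $R_1$, so $N=\lvert\det R_1\rvert$ and ${\bf B}$ becomes a complete set of representatives of ${\bf R}$. Then $\mu_{{\bf R},{\bf B}}$ is the normalized Lebesgue measure on the integral self-affine tile $T=T({\bf R},{\bf B})$, which by the Lagarias--Wang theory tiles $\mathbb R^{d+r}$ with multiplicity one by the lattice $\Gamma=\mathbb Z[{\bf R},{\bf B}]$. Since $C=0$ and $0\in B(i)$ for every $i$, both $(a_i-a_{i'},0)$ and $(0,d-d')$ lie in ${\bf B}-{\bf B}$, so $\Gamma$ splits as a product $\Gamma_1\times\Gamma_2$ with $\Gamma_1=\mathbb Z[R_1,B_1]$ and $\Gamma_2$ the $R$-invariant lattice generated by the $B(i)$. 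Hence $\Gamma^\ast=\Gamma_1^\ast\times\Gamma_2^\ast$ is a product spectrum for $\mu_{{\bf R},{\bf B}}$ with $\Lambda_1=\Gamma_1^\ast$ a spectrum for $\mu_1$, and Theorem \ref{pr1.3} yields $\Lambda_2=\Gamma_2^\ast$. For the ``moreover'' statement, $\mu_\omega$ is the normalized Lebesgue measure on $T(\{R,B(i_k)\})$, and slicing the tiling $T\oplus\Gamma=\mathbb R^{d+r}$ over the base tiling of $\mathbb R^r$ by $T_1=T(R_1,B_1)$ exhibits each fibre $T(\{R,B(i_k)\})$ as a tile for $\mathbb R^d$ by the fixed lattice $\tilde\Gamma=\Gamma_2$, for $\mathbb P$-almost every $\omega$.

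Mutual orthogonality of the candidate spectra and the product splitting are formal consequences of the Hadamard and block-diagonal structure, and in case (ii) completeness is supplied directly by the multiplicity-one Lagarias--Wang tiling. The main obstacle is therefore the completeness step in case (i): since the $\mu_\omega$ can be singular, one cannot appeal to tiling, and one must show via the one-dimensional dynamical simplicity criterion that the product orthonormal family actually spans $L^2(\mu_{{\bf R},{\bf B}})$, ruling out a proper invariant set that would obstruct the basis property.
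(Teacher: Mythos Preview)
The statement you were given is a \emph{conjecture}; the paper does not prove it in general. What the paper does prove is Theorem~\ref{th1.4}, which establishes the conjecture only under hypothesis (i) ($d=1$) or hypothesis (ii) (complete digit sets). Your proposal is, in effect, a sketch of Theorem~\ref{th1.4} rather than of the conjecture, so as a proof of the stated conjecture it is incomplete by design. That said, it is worth comparing your sketch of the two cases to the paper's actual arguments.

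For case (ii) your outline is close to the paper's: build the quasi-product tile, invoke the Lagarias--Wang lattice tiling, and read off a product spectrum. The paper simply cites that the quasi-product self-affine tile admits a lattice tiling of the form $\bz\times\tilde\Gamma$ (\cite{LW2}, \cite[Proposition~4.4]{DHL15}), whereas you try to derive the product splitting of $\bz[{\bf R},{\bf B}]$ directly; your splitting argument is essentially right but would need a line or two more to be airtight. For the ``moreover'' clause the paper argues via Lemma~\ref{lem3.11} (a spectral measure with a lattice spectrum must be Lebesgue on a tile), not via slicing the ambient tiling; your slicing picture is plausible but not what the paper does.

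For case (i) there is a genuine gap. You assert that ``the dual dynamics\dots decouples'' and that completeness of $\Lambda_1\times\Lambda_2$ follows from ``the one-dimensional dynamical simplicity criterion developed in the appendix.'' But the quasi-product triple $({\bf R},{\bf B},{\bf L})$ lives on $\br^2$, and the appendix only shows that Hadamard triples on $\br^1$ are dynamically simple. Proving that the \emph{two-dimensional} triple is dynamically simple is precisely the hard work: the paper invokes Theorem~\ref{thccr} and carries out a nontrivial case analysis on the possible $R^T$-invariant subspaces $V$ (either $\br\times\{0\}$ or $\{0\}\times\br$), ruling each out unless all $B(i)$ coincide. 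Your choice $R_1=N$ is also dangerous: if $N=R$ then ${\bf R}=N I_2$, every line is ${\bf R}^T$-invariant, and the subspace analysis of Theorem~\ref{thccr} gives no traction. The paper deliberately takes $R_1=pN$ with $pN\neq R$ to force the only proper invariant subspaces to be the coordinate axes. Without this, and without the full $2$-dimensional invariant-set analysis, the completeness step in your case (i) is unsupported.
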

Theorem \ref{th1.4} showed that the conjecture is true on ${\mathbb R}^1$ and  in the case when we can construct a quasi-product form self-affine tile. In the end of the introduction, we illustrate Theorem \ref{th1.4} by an example. It is very interesting to notice, that some simple infinite convolution products, are not spectral. This sheds some light on our results that show that ``almost every'' infinite convolution is a spectral measure. However, not all of them as we see in the next example. 

\begin{example}\label{ex1.5}
Let $R=2$ and $B(0) = \{0,1\}$ and $B(1) = \{0,3\}$. As each $B(i)$ is a complete residue modulo $2$. Theorem \ref{th1.4} shows that, almost surely,
$$
\mu_{\omega} = \delta_{B(\omega_1)/2}\ast\delta_{B(\omega_2)/2^2}\ast...
$$
is a spectral measure with a common spectrum ${\mathbb Z}$.  However, if we consider a special case with $\omega = 01111...$, we see that the measure
$$
\mu_{\omega} = \delta_{\{0,1\}/2}\ast {\mathcal L}_{[0,3/2]},
$$
where ${\mathcal L}_{[0,3/2]}$ is the normalized Lebesgue measure supported on the interval $[0,3/2]$. Thus, in the first level, the no-overlap condition is not satisfied. Moreover, the measure $\mu_{\omega}$ is absolutely continuous with respect to the Lebesgue measure, but it is not spectral as the density is not uniformly distributed \cite{DL14}. Despite this specific example, the measures $\mu_{\omega}$ are spectral, for almost all $\omega\in\{0,1\}^{\bn}$, by Theorem \ref{th1.4}.
\end{example}

One may also refer to \cite{AHLau, AHL} for some deterministic examples in which the random convolution is spectral everywhere. However, strong assumption on $L$ is required and it does not cover Example \ref{ex1.5}.

\medskip

We organize our paper as follows: we study arbitrary convolutions in Section 2 and random convolutions in Section 3. In the appendix, we study the dynamically simple spectrum used in Section 3.

\section{Arbitrary convolutions}

Given a sequence of Hadamard triples $\{(R_i,B_i,L_i)\}$ with measures $\mu$ defined in (\ref{mu}), its Fourier transform is easily computed as
$$
\widehat{\mu}(\xi)  = \prod_{n=1}^{\infty} \widehat{\delta_{B_i}} (({\bf R}_n^{T})^{-1}\xi).
$$
We first note that
\begin{lemma}
The set $\Lambda$ in (\ref{Lambda}) is a mutually orthogonal set for $\mu$.
\end{lemma}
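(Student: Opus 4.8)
The plan is to reduce mutual orthogonality to the vanishing of a single factor in the product formula for $\widehat{\mu}$, using the unitarity of the relevant Hadamard matrix. First I would record the elementary consequence of the Hadamard condition for $(R_n,B_n,L_n)$: unitarity gives $HH^*=I$, so the rows of $H$ are orthonormal, which is exactly the statement that
\[
\widehat{\delta_{R_n^{-1}B_n}}(\ell-\ell')=\frac{1}{N_n}\sum_{b\in B_n}e^{-2\pi i\langle R_n^{-1}b,\,\ell-\ell'\rangle}=0\qquad\text{for all }\ell\neq\ell'\in L_n,
\]
where $N_n=\#B_n$. This is the only place the Hadamard hypothesis enters.

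Next I would take two distinct points $\lambda,\lambda'\in\Lambda$ and fix representations $\lambda=\sum_k (R_1^TR_2^T\cdots R_{k-1}^T)\ell_k$ and $\lambda'=\sum_k (R_1^TR_2^T\cdots R_{k-1}^T)\ell_k'$ with $\ell_k,\ell_k'\in L_k$ (only finitely many terms are nonzero, since $0\in L_k$). Because $\lambda\neq\lambda'$, any two such representations must disagree somewhere, so there is a smallest index $n$ with $\ell_n\neq\ell_n'$; all terms with $k<n$ cancel in $\lambda-\lambda'$. Applying $({\bf R}_n^T)^{-1}=(R_1^T\cdots R_n^T)^{-1}$ and isolating the $k=n$ term, the cocycle telescopes and I obtain
\[
({\bf R}_n^T)^{-1}(\lambda-\lambda')=(R_n^T)^{-1}(\ell_n-\ell_n')+m,\qquad m=\sum_{k\ge n+1}(R_{n+1}^T\cdots R_{k-1}^T)(\ell_k-\ell_k')\in\mathbb{Z}^d,
\]
the membership $m\in\mathbb{Z}^d$ holding because every $R_j$ has integer entries and every $\ell_k-\ell_k'\in\mathbb{Z}^d$.

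I would then substitute $\xi=\lambda-\lambda'$ into the product $\widehat{\mu}(\xi)=\prod_{k\ge1}\widehat{\delta_{B_k}}\big(({\bf R}_k^T)^{-1}\xi\big)$ and examine the $k=n$ factor. Since $B_n\subset\mathbb{Z}^d$, the function $\widehat{\delta_{B_n}}$ is $\mathbb{Z}^d$-periodic, so the integer vector $m$ may be discarded, giving
\[
\widehat{\delta_{B_n}}\big(({\bf R}_n^T)^{-1}(\lambda-\lambda')\big)=\widehat{\delta_{B_n}}\big((R_n^T)^{-1}(\ell_n-\ell_n')\big)=\widehat{\delta_{R_n^{-1}B_n}}(\ell_n-\ell_n'),
\]
which is $0$ by the Hadamard relation recorded in the first step (here one uses $\langle (R_n^T)^{-1}v,b\rangle=\langle v,R_n^{-1}b\rangle$ to rewrite the exponent). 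As this factor in the convergent product vanishes, $\widehat{\mu}(\lambda-\lambda')=0$, which is precisely the mutual orthogonality of $E(\Lambda)$.

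The computation itself is routine; the only points requiring care are the bookkeeping with the transposed cocycle $(R_1^T\cdots R_{k-1}^T)$ and verifying that the tail vector $m$ lands in $\mathbb{Z}^d$ so that periodicity of $\widehat{\delta_{B_n}}$ applies. I expect the only genuine (and minor) subtlety to be the non-uniqueness of the representation of points of $\Lambda$: one must note that it suffices to produce a single pair of representations differing at a first index $n$, which is automatic once $\lambda\neq\lambda'$, and that the final conclusion uses only $\ell_n\neq\ell_n'$ and hence is independent of the chosen representation.
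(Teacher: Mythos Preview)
Your argument is correct and is essentially the standard one the paper alludes to: the paper's proof just cites Strichartz and remarks that the Hadamard matrices $H_n$ have mutually orthogonal rows, which is exactly the identity $\widehat{\delta_{R_n^{-1}B_n}}(\ell-\ell')=0$ for $\ell\neq\ell'\in L_n$ that you invoke, and your ``first index of disagreement plus $\mathbb{Z}^d$-periodicity'' computation is the usual way to unpack that remark into a vanishing factor of $\widehat{\mu}$. There is no substantive difference in approach; you have simply written out the details the paper leaves to the reference.
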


\begin{proof}
This was proved in Strichartz \cite[Theorem 2.7]{str00}. In short, it follows from the fact that the Hadamard matrices $H_n = \frac{1}{\sqrt{N_i}}\left[e^{2\pi i \langle R^{-1}b,\ell\rangle}\right]_{\ell\in L_i, b\in B_i}$ have mutually orthogonal rows, and so does the matrix 
$$\frac{1}{\sqrt{{\bf M}_n}}\left(e^{-2\pi i \ip{{\bf R}^{-n}\bf b}{\lambda}}\right)_{\lambda\in\Lambda_n,\bf b\in {\bf B}_n}.$$
\end{proof}

\medskip

Recall that we can write the support of $\mu$, $K_{0}$, as
\begin{equation}\label{K_0}
K_{0} = \bigcup_{{\bf b}\in {\bf B}_n} ({\bf b}+ K_{n}).
\end{equation}
Denote by $K_{\bf b} = {\bf b}+ K_{n}$ and by ${\bf 1}_{K_{\bf b}}$ the characteristic function of $K_{\bf b}$. Let
$$
{\mathcal S}_n = \left\{\sum_{{\bf b}\in {\bf B}_n}w_{\bf b}{\bf 1}_{K_{\bf b}}: w_{\bf b}\in{\mathbb C}\right\}.
$$
${\mathcal S}_n$ denotes the collection of all $n^{th}$ level step functions on $K_{0}$. As
$$
K_{n} = \bigcup_{b\in B_{n+1}}\left({\bf R}_{n+1}^{-1}{b}+K_{n+1}\right)
$$
and $0\in B_{n}$ for all $n$, we have ${\mathcal S}_1\subset {\mathcal S}_2\subset....$. Let also
$$
{\mathcal S} = \bigcup_{n=1}^{\infty}{\mathcal S}_n.
$$

\begin{lemma}\label{lem3.0}
If $\mu$ is compactly supported, then  ${\mathcal S}$ forms a dense set of functions in $L^2(\mu)$.
\end{lemma}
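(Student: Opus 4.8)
The plan is to show that step functions are dense in $L^2(\mu)$ by using the compactness of the support together with the fact that the diameters of the pieces $K_{\bf b}$ shrink to zero as $n\to\infty$. Since ${\mathcal S}_1\subset{\mathcal S}_2\subset\cdots$ and $\mathcal S=\bigcup_n{\mathcal S}_n$, it suffices to approximate an arbitrary continuous function $f\in C(K_0)$ uniformly (hence in $L^2(\mu)$, as $\mu$ is a probability measure), because $C(K_0)$ is dense in $L^2(\mu)$ for a compactly supported Borel measure. So the first step is to reduce to approximating continuous functions.

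The key geometric input is that $\operatorname{diam}(K_n)\to 0$ as $n\to\infty$. Here one uses that $K_0$ is compact, say $K_0\subseteq B(0,\rho)$, and that $K_n={\bf R}_{n+1}^{-1}B_{n+1}+{\bf R}_{n+2}^{-1}B_{n+2}+\cdots$ is the image of the tail of the support under contracting maps. First I would note that $K_n\subseteq ({\bf R}_n^T)^{-1}\,$-scaled copy of $K_0$; more precisely, since ${\bf R}_n=R_n\cdots R_1$ is a product of expansive integer matrices, $\|{\bf R}_n^{-1}\|\to 0$, and every point of $K_n$ is of the form ${\bf R}_n^{-1}y$ with $y$ ranging over a fixed compact set (the support of $\mu_{>0}$ rescaled, or one can bound $K_n$ directly by $\|{\bf R}_n^{-1}\|$ times a uniform constant coming from compactness of $K_0$). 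Hence $\operatorname{diam}(K_{\bf b})=\operatorname{diam}(K_n)\le C\|{\bf R}_n^{-1}\|\to 0$ uniformly in ${\bf b}\in{\bf B}_n$.

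Given this, the approximation is the standard one: fix $f\in C(K_0)$ and $\varepsilon>0$. By uniform continuity of $f$ on the compact set $K_0$, choose $\delta>0$ with $|f(x)-f(y)|<\varepsilon$ whenever $|x-y|<\delta$. Choose $n$ large enough that $\operatorname{diam}(K_n)<\delta$. For each ${\bf b}\in{\bf B}_n$ pick a point $x_{\bf b}\in K_{\bf b}$ and set $g=\sum_{{\bf b}\in{\bf B}_n}f(x_{\bf b}){\bf 1}_{K_{\bf b}}\in{\mathcal S}_n$. On the set where the pieces $K_{\bf b}$ are disjoint (which carries full $\mu$-measure, since by the no-overlap condition $\mu(K_{\bf b}\cap K_{\bf b'})=0$ for ${\bf b}\ne{\bf b'}$, though for density one can also argue without it by just using $\mu$-a.e.\ membership in some piece), every $x\in K_0$ lies in some $K_{\bf b}$ with $|x-x_{\bf b}|\le\operatorname{diam}(K_n)<\delta$, so $|f(x)-g(x)|<\varepsilon$ for $\mu$-almost every $x$. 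Therefore $\|f-g\|_{L^2(\mu)}\le\varepsilon$, which proves $f$ lies in the $L^2(\mu)$-closure of $\mathcal S$.

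The main obstacle is making the estimate $\operatorname{diam}(K_n)\to 0$ rigorous and uniform, since $K_n$ is itself an infinite convolution support and one must control all tails simultaneously using only compactness of $K_0$ and expansiveness of the $R_i$. The cleanest route is to bound $K_n\subseteq {\bf R}_n^{-1}(K_0')$ for a fixed compact set $K_0'$ independent of $n$ (for instance, absorbing the uniform bound on $\|{\bf R}_k^{-1}{\bf R}_n\|$ for $k>n$ into the compactness estimate) and then invoke $\|{\bf R}_n^{-1}\|\to 0$; I would verify that the relevant operator norms decay by the spectral radius bound coming from all eigenvalues of each $R_i$ having modulus strictly greater than $1$, taking care that this holds uniformly if the triples are drawn from a finite family (as in the random-convolution setting) but requires a mild argument in the fully arbitrary case.
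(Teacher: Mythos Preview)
Your approach is essentially the same as the paper's: reduce to continuous $f$ via regularity of $\mu$, use uniform continuity of $f$ on the compact support $K_0$, and approximate by $g=\sum_{{\bf b}\in{\bf B}_n}f(x_{\bf b})\,{\bf 1}_{K_{\bf b}}\in\mathcal S_n$ once $\operatorname{diam}(K_n)$ is small. The paper simply asserts that the diameters can be made arbitrarily small and does not supply the justification you flag as the ``main obstacle''; your caution there is warranted, since expansiveness of each $R_i$ controls eigenvalues rather than operator norms and so does not by itself guarantee $\|{\bf R}_n^{-1}\|\to 0$ in the fully arbitrary case.
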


\begin{proof}
Take first a continuous function $f$ on $K_0$
and $\epsilon > 0$. Since $K_0$ is compact, the function $f$ is uniformly continuous. We can find $m$ large enough such that the diameter of all sets $K_{\bf b}$, ${\bf b}\in {\bf B}_m$, is small enough so that $ |f(x) - f(y)| <\epsilon$ for all
$x,y\in K_{\bf b}$. Consider $g = \sum_{{\bf b}\in {\bf B}_m}f({\bf b}){\bf 1}_{K_{\bf b}}$. It is easy to see that $\sup_{x\in K_0}|f(x)-g(x)|<\epsilon$. Hence, ${\mathcal S}$ is uniformly dense in $C(K_0)$. As $\mu$ is a regular Borel measure, ${\mathcal S}$ is dense in $L^2(\mu).$
\end{proof}

\medskip

\begin{lemma}\label{lem3.1}
Let $f = \sum_{{\bf b}\in {\bf B}_n}w_{\bf b}{\bf 1}_{K_{\bf b}}\in {\mathcal S}_n$ and let ${\bf w} = (w_{\bf b})_{{\bf b}\in{\bf B}_n}$. Denote by $\|\cdot\|$ the Euclidean norm on ${\mathbb C}^{{\bf M}_n}$. Then
\begin{equation}\label{eq3.1}
\int|f|^2d\mu = \frac{1}{{\bf M}_n}\sum_{{\bf b}\in {\bf B}_n}|w_{\bf b}|^2 =  \frac{1}{{\bf M}_n}\|w_{\bf b}\|^2 .
\end{equation}
\begin{equation}\label{eq3.2}
\int f(x)e^{-2\pi i \ip{\lambda}{x}}d\mu(x) = \frac{1}{{\bf M}_n}\widehat{\mu_{>n}}(\lambda)\sum_{{\bf b}\in {\bf B}_n}w_{\bf b} e^{-2\pi i  \ip{{\bf b}}{ \lambda}}.
\end{equation}
(Recall that ${\bf M}_n = N_1...N_n$). Moreover,
\begin{equation}\label{eq3.3}
\sum_{\lambda\in\Lambda_n}\left|\int f(x)e^{-2\pi i \ip{\lambda}{x}}d\mu(x)\right|^2=\frac{1}{{\bf M}_n}\|{\mathcal F}_n{\bf w}\|^2.
\end{equation}
\end{lemma}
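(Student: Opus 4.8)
The plan is to reduce all three identities to the convolution factorization $\mu=\mu_n\ast\mu_{>n}$, in which $\mu_n=\frac{1}{{\bf M}_n}\sum_{{\bf b}\in{\bf B}_n}\delta_{\bf b}$ is the uniform atomic measure carried by ${\bf B}_n$ (uniform because $\#{\bf B}_n={\bf M}_n$, so all the defining sums are distinct), and $\mu_{>n}$ is carried by $K_n$. Convolving gives $\mu=\frac{1}{{\bf M}_n}\sum_{{\bf b}\in{\bf B}_n}(\mu_{>n})_{\bf b}$, where $(\mu_{>n})_{\bf b}$ is the translate of $\mu_{>n}$ by ${\bf b}$, a probability-scaled measure supported on $K_{\bf b}={\bf b}+K_n$. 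The no-overlap condition is precisely the statement that these ${\bf M}_n$ translates are mutually disjoint up to $\mu$-measure zero, and this is the single structural input I would use throughout.

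First I would record the consequence $\mu(K_{\bf b})=\frac{1}{{\bf M}_n}$ for each ${\bf b}\in{\bf B}_n$: from the factorization, $\mu(K_{\bf b})=\frac{1}{{\bf M}_n}\sum_{{\bf b}'}(\mu_{>n})_{{\bf b}'}(K_{\bf b})$, where no-overlap annihilates every term with ${\bf b}'\neq{\bf b}$, while $(\mu_{>n})_{\bf b}(K_{\bf b})=\mu_{>n}(K_n)=1$. Equation (\ref{eq3.1}) then follows immediately: no-overlap forces ${\bf 1}_{K_{\bf b}}{\bf 1}_{K_{{\bf b}'}}=0$ $\mu$-a.e. for ${\bf b}\neq{\bf b}'$, so $|f|^2=\sum_{\bf b}|w_{\bf b}|^2{\bf 1}_{K_{\bf b}}$ $\mu$-a.e., and integrating against $\mu$ with $\mu(K_{\bf b})=1/{\bf M}_n$ gives the stated value $\frac{1}{{\bf M}_n}\sum_{\bf b}|w_{\bf b}|^2$.

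For (\ref{eq3.2}) I would integrate term by term, $\int f\,e^{-2\pi i\ip{\lambda}{x}}\,d\mu=\sum_{\bf b}w_{\bf b}\int_{K_{\bf b}}e^{-2\pi i\ip{\lambda}{x}}\,d\mu$, and again use the factorization together with no-overlap to replace $\int_{K_{\bf b}}(\cdots)\,d\mu$ by $\frac{1}{{\bf M}_n}\int e^{-2\pi i\ip{\lambda}{x}}\,d(\mu_{>n})_{\bf b}(x)$. Translating back by ${\bf b}$ turns this last integral into $e^{-2\pi i\ip{\bf b}{\lambda}}\widehat{\mu_{>n}}(\lambda)$, and factoring out $\frac{1}{{\bf M}_n}\widehat{\mu_{>n}}(\lambda)$ produces (\ref{eq3.2}). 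Then (\ref{eq3.3}) is pure bookkeeping against the definition of ${\mathcal F}_n$: its $\lambda$-coordinate acting on ${\bf w}$ is $({\mathcal F}_n{\bf w})_\lambda=\frac{1}{\sqrt{{\bf M}_n}}|\widehat{\mu_{>n}}(\lambda)|\sum_{\bf b}w_{\bf b}e^{-2\pi i\ip{\bf b}{\lambda}}$, so $\|{\mathcal F}_n{\bf w}\|^2=\frac{1}{{\bf M}_n}\sum_{\lambda\in\Lambda_n}|\widehat{\mu_{>n}}(\lambda)|^2\,\big|\sum_{\bf b}w_{\bf b}e^{-2\pi i\ip{\bf b}{\lambda}}\big|^2$; comparing this with the left-hand side of (\ref{eq3.3}) rewritten via (\ref{eq3.2}), which carries a prefactor $\frac{1}{{\bf M}_n^2}$, yields exactly the missing factor $\frac{1}{{\bf M}_n}$.

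The only genuinely delicate point, and the step I would write out with care, is the interplay between the factorization $\mu=\frac{1}{{\bf M}_n}\sum_{\bf b}(\mu_{>n})_{\bf b}$ and the no-overlap hypothesis: one must check that this splitting is legitimate and that the no-overlap condition, phrased in terms of intersections of the sets $K_{\bf b}$, is precisely what lets one both evaluate $\mu(K_{\bf b})$ and discard the cross terms in (\ref{eq3.1}) and the off-diagonal contributions in (\ref{eq3.2}). Once that is pinned down, everything else is a direct computation with no further obstacles.
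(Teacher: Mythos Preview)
Your proposal is correct and follows essentially the same route as the paper: factor $\mu=\mu_n\ast\mu_{>n}$, use the no-overlap condition to kill the off-diagonal contributions, obtain $\mu(K_{\bf b})=1/{\bf M}_n$, and then read off (\ref{eq3.1})--(\ref{eq3.3}) by direct computation. The only cosmetic difference is ordering: the paper first shows $\mu(K_{\bf b})\ge 1/{\bf M}_n$ and then uses no-overlap plus additivity to force equality (whence the cross terms vanish), whereas you deduce the vanishing of $(\mu_{>n})_{{\bf b}'}(K_{\bf b})$ for ${\bf b}'\neq{\bf b}$ directly from $\frac{1}{{\bf M}_n}(\mu_{>n})_{{\bf b}'}\le\mu$ and the no-overlap hypothesis, which is arguably a touch cleaner.
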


\begin{proof}
Note that
  \begin{equation}\label{eq3.0}
   \mu(K_{\bf b}) = \int {\bf 1}_{K_{\bf b}}(x)d(\mu_n\ast\mu_{>n}(x)) = \frac{1}{{\bf M}_n}+\frac{1}{{\bf M}_n}\sum_{{\bf b}'\in{\bf B}_n, {\bf b}'\neq{\bf b}}\int  {\bf 1}_{{\bf b}+K_n}({\bf b}'+y)d\mu_{>n}(y).
  \end{equation}
  This implies that $\mu(K_{\bf b})\ge 1/{\bf M}_n$ for all ${\bf b}\in {\bf B}_n$. On the other hand, because of the no-overlap condition and (\ref{K_0}),
$$
1 = \mu(K_0) = \mu\left(\bigcup_{{\bf b}\in {\bf B}_n} K_{\bf b}\right) = \sum_{{\bf b}\in {\bf B}_n} \mu(K_{\bf b}).
$$
If $\mu(K_{\bf b})> 1/{\bf M}_n$ for some ${\bf b}\in {\bf B}_n$, then $\sum_{{\bf b}\in {\bf B}_n} \mu(K_{\bf b})>1$, which is a contradiction. Hence,  all $K_{\bf b}$, ${\bf b}\in {\bf B}_n$ have the same $\mu$-measure $1/{\bf M}_n$ and (\ref{eq3.1}) follows from a direct computation. For (\ref{eq3.2}), we  note that (\ref{eq3.0}) now becomes
$$
\frac{1}{{\bf M}_n} = \mu(K_{\bf b}) = \int {\bf 1}_{K_{\bf b}}(x)d(\mu_n\ast\mu_{>n}(x)) = \frac{1}{{\bf M}_n}+\frac{1}{{\bf M}_n}\sum_{{\bf b}'\in{\bf B}_n, {\bf b}'\neq{\bf b}}\int  {\bf 1}_{{\bf b}+K_n}({\bf b}'+y)d\mu_{>n}(y)
$$
since supp $\mu_{>n} = K_{n}$. Thus, $\int  {\bf 1}_{{\bf b}+K_n}({\bf b}'+y)d\mu_{>n}(y)=0$ and ${\bf 1}_{{\bf b}+K_n}({\bf b}'+y)=0$ $\mu_{>n}$-a.e. Hence,
$$
\begin{aligned}
\int f(x)e^{-2\pi i \ip{\lambda}{x}}d\mu(x) = &\sum_{{\bf b}\in{\bf B}_n}w_{\bf b} \int {\bf 1}_{K_{\bf b}}(x)e^{-2\pi i \ip{\lambda}{x}}d(\mu_n\ast\mu_{>n}(x)) \\
=& \sum_{{\bf b}\in{\bf B}_n}w_{\bf b} \int\int {\bf 1}_{{\bf b}+K_{n}}(x+y)e^{-2\pi i \ip{\lambda}{x+y}}d\mu_n(x)d\mu_{>n}(y).\\
=&\sum_{{\bf b}\in{\bf B}_n}w_{\bf b} \frac{1}{{\bf M}_n}\int \sum_{{\bf b}'\in{\bf B}_n}{\bf 1}_{{\bf b}+K_{n}}({\bf b}'+y)e^{-2\pi i \ip{\lambda}{{\bf b}'+y}}d\mu_{>n}(y)\\
=&\sum_{{\bf b}\in{\bf B}_n}w_{\bf b} \frac{1}{{\bf M}_n}e^{-2\pi i \ip{\lambda}{{\bf b}}}\int e^{-2\pi i \ip{\lambda}{ y}}d\mu_{>n}(y)\\
=& \frac{1}{{\bf M}_n}\widehat{\mu_{>n}}(\lambda)\sum_{{\bf b}\in {\bf B}_n}w_{\bf b} e^{-2\pi i  \ip{{\bf b}}{\lambda}}.\\
\end{aligned}
$$
Thus (\ref{eq3.2}) follows. Finally, we have
\begin{equation}\label{eq3.4}
\begin{aligned}
\sum_{\lambda\in\Lambda_n}\left|\int f(x)e^{-2\pi i \ip{\lambda}{x}}d\mu(x)\right|^2 = &\frac{1}{{\bf M}_n} \sum_{\lambda\in\Lambda_n}|\widehat{\mu_{>n}}(\lambda)|^2\left|\sum_{{\bf b}\in {\bf B}_n}w_{\bf b}e^{-2\pi i  \ip{{\bf b}}{\lambda}}\right|^2\\
=&\frac{1}{{\bf M}_n} \sum_{\lambda\in\Lambda_n}\left|\sum_{{\bf b}\in {\bf B}_n}w_{\bf b}|\widehat{\mu_{>n}}(\lambda)|e^{-2\pi i  \ip{{\bf b}}{\lambda}}\right|^2 = \frac{1}{{\bf M}_n} \|{\mathcal F}_n{\bf w}\|^2,
\end{aligned}
\end{equation}
and (\ref{eq3.3}) follows.
\end{proof}

\medskip

We are now ready to prove our first theorem. We recall a standard fact of matrix analysis: If $A$ is a self-adjoint matrix and $\lambda_{\min}$ is its minimum eigenvalue, then
$$
\lambda_{\min} = \min_{\|{\bf w}\|=1}\langle A{\bf w},{\bf w}\rangle.
$$

\begin{proof}[Proof of Theorem \ref{th1.1}] Suppose first that  $\sigma: = \inf_n \min\sigma({\mathcal F}_n)>0$. For all $f\in {\mathcal S}_n$, by equations (\ref{eq3.1}) and (\ref{eq3.3}) in Lemma \ref{lem3.1},
$$
\sum_{\lambda\in\Lambda_n}\left|\int f(x)e^{-2\pi i \ip{\lambda}{x}}d\mu(x)\right|^2= \frac{1}{{\bf M}_n} \|{\mathcal F}_n{\bf w}\|^2 = \frac{1}{{\bf M}_n}\langle{\mathcal F}_n^{\ast}{\mathcal F}_n{\bf w},{\bf w}\rangle\geq \frac{1}{{\bf M}_n}\sigma\|{\bf w}\|^2 = \sigma\int|f|^2d\mu.
$$
As $f\in {\mathcal S}_n\subset{\mathcal S}_m$ for all $m>n$, we apply  $f$ to the inequality for $m$ and obtain
$$
\sum_{\lambda\in\Lambda_m}\left|\int f(x)e^{-2\pi i \ip{\lambda}{x}}d\mu(x)\right|^2\ge\sigma\int|f|^2d\mu.
$$
Taking $m$ to infinity and using the fact that $\Lambda = \bigcup_{m=1}^{\infty}\Lambda_m$, we have
$$
\sum_{\lambda\in\Lambda}\left|\int f(x)e^{-2\pi i \ip{\lambda}{x}}d\mu(x)\right|^2\ge\sigma\int|f|^2d\mu.
$$
As ${\mathcal S}$ forms a dense set, the above inequality is actually true for any $f\in L^2(\mu)$. This establishes the completeness of $\Lambda$ in $L^2(\mu)$.

\medskip

Let $\delta = \inf_n\inf_{\lambda\in \Lambda_n}|\widehat{\mu_{>n}}(\lambda)|$. We now prove the special case.  This follows from a direct observation that
$$
\|{\mathcal F}_n{\bf w}\|^2 \geq \delta^2 \|{\bf H}_n{\bf w}\|^2 = \delta^2\|{\bf w}\|^2
$$
where ${\bf H}_n  =\frac{1}{\sqrt{{\bf M}_n}}\left[e^{-2\pi i \langle{\bf b}, \lambda\rangle}\right]_{\lambda\in \Lambda_n,{\bf b}\in {\bf B}_n} $ is a Hadamard matrix. Hence, $\sigma\geq \delta^2>0$.
\end{proof}

\medskip

\begin{remark}We note that the theorem generalizes the result of Strichartz \cite[Theorem 2.8]{str00}, which asserted that if the Hadamard triples $(R_i,B_i, L_i)$ are chosen only from {\it finitely many choices}, and the zero sets $Z_i$ of the functions
$$m_{B_i}(x)=\frac{1}{N_i}\sum_{b\in B_i}e^{2\pi i \ip{b}{x}}$$  are separated from the set
$$
\Gamma_n = ({\bf R}_n^T)^{-1} \left(L_1+R_1^TL_2+...+(R_1^TR_2^T...R_{n-1}^T)L_n\right)
$$
by a distance $\delta>0$, uniformly in $n$. Then the measure $\mu(R_i,B_i)$ is a spectral measure. Indeed, this assumption implies $\inf_n\inf_{\lambda\in\Lambda_n}|\mu_{>n}(\lambda)|^2>0$. To see this, we note that
$$
|\widehat{\mu_{>n}}(\lambda)|^2 = \prod_{k=n+1}^{\infty}|m_{B_k} (({\bf R}_k^T)^{-1}\lambda)|^2.
$$
As there are only finitely many $B_i$, $\Gamma_n$ lies inside a compact set independent of $n$. From the fact that $m_{B_i}(0)=1$ and that $({\bf R}_k^T)^{-1}\lambda$ decays to zero exponentially,  we can find a $k_1$, independent of $\lambda\in\Lambda_n$ such that $\prod_{k=n+k_1+1}^{\infty}|m_{B_k} (({\bf R}_k^T)^{-1}\lambda)|^2$ is uniformly bounded below by some constant $c>0$. For the first $k_1$ terms, the assumption on $Z_i$ guarantees they are bounded away from $\delta'^{k_1}$, for some $\delta'>0$. Thus,
$$
\inf_n\inf_{\lambda\in\Lambda_n}|\widehat{\mu_{>n}}(\lambda)|^2 \geq \delta'^{k_1}c>0.
$$
\end{remark}
\medskip
\section{Random convolution}

In this section, we study random convolutions of discrete measures generated by Hadamard triples. We first show that quasi-product forms generate Hadamard triples.

\begin{proposition}\label{pr1.1}
If $({\bf R},{\bf B},{\bf L})$ is in quasi-product form as in Definition \ref{def1.5}, then  $({\bf R},{\bf B},{\bf L})$ is a Hadamard triple on ${\mathbb R}^{d+r}$.
\end{proposition}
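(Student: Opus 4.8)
The plan is to verify directly that the square matrix
$$H=\frac{1}{\sqrt{\#{\bf B}}}\left[e^{2\pi i\ip{{\bf R}^{-1}{\bf b}}{(\ell_1,\ell)}}\right]_{(\ell_1,\ell)\in{\bf L},\ {\bf b}\in{\bf B}}$$
satisfies $H^{*}H=I$. First I would record that $H$ is square: since $(R_1,B_1,L_1)$ is a Hadamard triple, $\#L_1=\#B_1=N$ (the $a_i$ being distinct), and since each $(R,B(i),L)$ is a Hadamard triple, $\#L=\#B(i)=M$; hence $\#{\bf L}=\#(L_1\times L)=NM=\#{\bf B}$. As $H$ is square, $H^{*}H=I$ is equivalent to $HH^{*}=I$, i.e.\ to orthonormality of the rows of $H$. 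So it suffices to show, for $(\ell_1,\ell),(\ell_1',\ell')\in L_1\times L$, that
$$\frac{1}{NM}\sum_{{\bf b}\in{\bf B}}e^{2\pi i\ip{{\bf R}^{-1}{\bf b}}{(\ell_1-\ell_1',\,\ell-\ell')}}=\delta_{(\ell_1,\ell),(\ell_1',\ell')}.$$

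The essential preliminary step is to invert the block-triangular matrix ${\bf R}$, obtaining
$${\bf R}^{-1}=\begin{bmatrix}R_1^{-1}&0\\-R^{-1}CR_1^{-1}&R^{-1}\end{bmatrix},$$
so that for ${\bf b}=\begin{bmatrix}a_i\\ d_{i,j}\end{bmatrix}$ we have ${\bf R}^{-1}{\bf b}=\begin{bmatrix}R_1^{-1}a_i\\ R^{-1}(d_{i,j}-CR_1^{-1}a_i)\end{bmatrix}$. Setting $u=\ell_1-\ell_1'$ and $v=\ell-\ell'$, the exponent then splits as
$$\ip{{\bf R}^{-1}{\bf b}}{(u,v)}=\ip{R_1^{-1}a_i}{u}+\ip{R^{-1}d_{i,j}}{v}-\ip{R^{-1}CR_1^{-1}a_i}{v}.$$
Summing over ${\bf b}\in{\bf B}$ amounts to summing over $i\in\{1,\dots,N\}$ and, for each $i$, over $d_{i,j}\in B(i)$. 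Since only the middle term depends on $j$, the row-orthogonality sum factors as
$$\frac{1}{N}\sum_{i=1}^{N}e^{2\pi i\left(\ip{R_1^{-1}a_i}{u}-\ip{R^{-1}CR_1^{-1}a_i}{v}\right)}\left(\frac{1}{M}\sum_{d\in B(i)}e^{2\pi i\ip{R^{-1}d}{v}}\right).$$

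I would then invoke the two Hadamard hypotheses in turn. The inner sum over $d\in B(i)$ is precisely the row-orthogonality expression of the Hadamard triple $(R,B(i),L)$ at $v=\ell-\ell'$; because $L$ is the common last factor for every $i$, this inner sum equals $\delta_{v,0}$ \emph{uniformly in} $i$ (it is $1$ if $v=0$ and $0$ otherwise). Hence if $v\neq0$ the entire expression vanishes, matching $\delta_{(\ell_1,\ell),(\ell_1',\ell')}=0$. If $v=0$, the cross term $\ip{R^{-1}CR_1^{-1}a_i}{v}$ also vanishes, and the expression collapses to $\frac{1}{N}\sum_{i=1}^{N}e^{2\pi i\ip{R_1^{-1}a_i}{u}}$, which by the Hadamard property of $(R_1,B_1,L_1)$ equals $\delta_{u,0}$. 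Combining the two cases yields exactly $\delta_{(\ell_1,\ell),(\ell_1',\ell')}$, establishing unitarity.

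The one delicate point, and the reason the off-diagonal block $C$ causes no trouble, is the interplay between the two sums: the cross term introduced by $C$ depends on $v$, yet it is exactly the inner $(R,B(i),L)$-sum that forces us into the case $v=0$, which is the single situation in which that cross term disappears. I expect the crux of the argument to be recognizing this clean decoupling into the two prescribed Hadamard conditions, rather than any individual algebraic manipulation.
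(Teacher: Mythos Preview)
Your proof is correct and follows essentially the same approach as the paper's: a direct verification of unitarity by expanding the orthogonality sum using the block-triangular form of ${\bf R}^{-1}$ and reducing to the two given Hadamard hypotheses. The only difference is that you verify $HH^{*}=I$ (summing over ${\bf b}\in{\bf B}$, with the inner sum over $B(i)$ forcing $v=0$ and thereby killing the $C$-cross term), whereas the paper verifies $H^{*}H=I$ (summing over ${\bf L}=L_1\times L$, which factors as a genuine product, with the case $i=i'$ killing the cross term); the two computations are exact duals of one another.
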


\begin{proof}
We have that ${\bf R}^{-1}$ is of the form
\beq
 {\bf R}^{-1}=\begin{bmatrix} R_1^{-1}&0\\ D &R^{-1}\end{bmatrix} ,
\eeq
for some matrix $D$. Consider ${\bf b} = \begin{bmatrix} a_i\\ d_{i,j}\end{bmatrix}\neq{\bf b}' = \begin{bmatrix} a_{i'}\\ d_{i',j'}\end{bmatrix}$ and
$$
\begin{aligned}
A((i,j),(i',j')):=&\sum_{{\bf \ell}\in {\bf L}}e^{-2\pi i \ip{{\bf R}^{-1}({\bf b}-{\bf b}')}{{\bf \ell}}}\\
=&\sum_{\ell_1\in L_1}\sum_{\ell_2\in L}e^{-2\pi i \left(\ip{R_1^{-1}(a_i-a_{i'})}{ \ell_1}+\ip{D(a_i-a_{i'})}{\ell_2}+\ip{R^{-1}(d_{i,j}-d_{i',j'})}{\ell_2}\right)
}\\
=&\left(\sum_{\ell_1\in L_1}e^{-2\pi i \ip {R_1^{-1}(a_i-a_i')}{ \ell_1}}\right)\cdot\left(\sum_{\ell_2\in L_2}e^{-2\pi i \left( \ip{D(a_i-a_{i'})}{ \ell_2}+\ip{R_2^{-1}(d_{i,j}-d_{i',j'})}{ l_2}\right)}\right).\end{aligned}
$$
If $i\neq i'$ then $A((i,j),(i',j'))=0$ because $(R_1,B_1,L_1)$ is a Hadamard triple. If $i=i'$, then
$$A((i,j),(i',j'))=N_1\sum_{l_2\in L_2}e^{2\pi i R_2^{-1}(d_{i,j}-d_{i,j'})\cdot l_2}=0,$$
because $(R_2,B_2(i),L_2)$ are  Hadamard triples for all $i$. This shows that the matrix $\left[e^{-2\pi i \ip{{\bf R}^{-1}{\bf b}}{{\bf\ell}}}\right]_{{\bf \ell}\in{\bf L},{\bf b}\in{\bf B}}$ has mutually orthogonal rows and hence  $({\bf R},{\bf B},{\bf L})$ is a Hadamard triple on ${\mathbb R}^{d+r}$.
\end{proof}

\medskip

We now derive and collect the necessary information about the self-affine measure generated by the quasi-product form  in Definition \ref{def1.5}. These properties were all considered in \cite{DJ07d}. First, we note that
$${\bf R}^{-1}=\begin{bmatrix}
R_1^{-1}&0\\
-R^{-1}CR_1^{-1}&R^{-1}
\end{bmatrix}$$
and, by induction,
$${\bf R}^{-k}= \begin{bmatrix}
R_1^{-k}&0\\
D_k&R^{-k}
\end{bmatrix},\mbox{ where }D_k:=-\sum_{j=0}^{k-1}R^{-(j+1)}CR_1^{-(k-j)}.$$
The support of the self-affine measure $\mu$ defined by ${\bf R}$ and $ {\bf B}$ is given by
 \begin{equation}\label{T(R,B)}
 T({\bf R},{\bf B})=\left\{\sum_{k=1}^\infty {\bf R}^{-k}{\bf b}_k : {\bf b}_k\in {\bf B}\right\}.
 \end{equation}
Therefore any element $(x,y)^T\in T({\bf R},{\bf B})$ can be written in the following form
$$x=\sum_{k=1}^\infty R_1^{-k}a_{i_k}, \quad y=\sum_{k=1}^\infty D_ka_{i_k}+\sum_{k=1}^\infty R^{-k}d_{i_k,j_k}.$$
Let $X_1$ be the attractor (in $\br^r)$ associated to the IFS defined by the pair $(R_1,B_1)$, i.e.,
 $$X_1=T(R_1,B_1) = \left\{\sum_{k=1}^\infty { R}^{-k}{b}_k : { b}_k\in { B}_1\right\}.
 $$ Let $\mu_1$ be the (equal-weighted) invariant measure associated to this pair.

\medskip

For each sequence $\omega=(i_1i_2\dots)\in\{1,\dots,N\}^{\bn} = \{1,\dots, N\}\times\{1,\dots, N\}\times...$, define the map $\pi: \Omega_1\rightarrow X_1$ by
\begin{equation}\label{eqxomega}
\pi(\omega)=\sum_{k=1}^\infty R_1^{-k}a_{i_k}.
\end{equation}
 As $(R_1,B_1)$ forms a Hadamard triple with  $L_1$, the measure $\mu_1$ has the no-overlap property \cite[Theorem 1.7]{DL15}. It implies that for $\mu_1$-a.e. $x\in X_1$, there is a unique $\omega$ such that $\pi(\omega)=x$. We define this as $\pi^{-1}(x)$. This establishes a bijective correspondence, up to measure zero, between the set $\Omega_1:=\{1,\dots,N\}^{\bn}$ and $X_1$. For details about the correspondence, one can refer to \cite[Section 1.4]{Ki}. The measure $\mu_1$ from $X_1$ is pulled back to the product measure ${\mathbb P}$ defined in (\ref{P}), i.e.
 $$
\mu_1= {\mathbb P}\circ \pi^{-1}.
 $$
\medskip

For $\omega=(i_1i_2\dots)$ in $\Omega_1$, define
$$\Omega_2(\omega):=\{(d_{i_1,j_1}d_{i_2,j_2}\dots d_{i_n,j_n}\dots) : j_k\in \{1,\dots,M\}\}.$$
For $\omega\in\Omega_1$, define $g(\omega):=\sum_{k=1}^\infty D_ka_{i_k}$.
   Also  define
$$X_2(\omega):=\left\{\sum_{k=1}^\infty R_2^{-k}d_{i_k,j_k}: d_{i_k,j_k}\in B(i_k)\right\}.$$
Note that $T({\bf R},{\bf B})$ takes the following form

$$T({\bf R},{\bf B})=\{(\pi(\omega),g(\omega)+y)^T: \omega\in \Omega_1,y\in X_2(\omega)\}.$$

\medskip

For $x\in X_1$, up to a $\mu_1$-measure zero set, $F$, we can write $\omega = \pi^{-1}(x) = (i_1i_2...)$ and we can define $\mu_x^2$ to be the infinite convolution product defined by $\mu_{\omega}$ in (\ref{mu_omega}). i.e.
$$
\mu_{x}^2 = \mu_{\omega} =\delta_{R_2^{-1}B_2(i_1)}\ast\delta_{R_2^{-2}B_2(i_2)}\ast\dots.
$$
with the support of $\mu_x^2$ equal to $X_2(x) := X_2(\pi^{-1}(x))$.

\medskip

 The following lemmas, established in \cite{DJ07d}, are the key identities for our analysis.

\begin{lemma}\label{lem1.23}\cite[Lemma 4.4]{DJ07d}
For any bounded Borel functions on $\br^d$, the self-affine measure $\mu_{\bf R,\bf B}$ satisfies
$$\int_{T(R,B)}f\,d\mu_{{\bf R},{\bf B}}=\int_{X_1}\int_{X_2(x)}f(x,y+g(x))\,d\mu_x^2(y)\,d\mu_1(x).$$
\end{lemma}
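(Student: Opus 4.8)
The plan is to realize the self-affine measure $\mu_{{\bf R},{\bf B}}$ as the distribution of a random series and to exploit the fact that the equal-weighted measure on ${\bf B}$ factors according to the product structure $(i,j)\mapsto {\bf b}_{i,j}=(a_i,d_{i,j})^T$. Since ${\bf R}$ is block lower-triangular with expansive diagonal blocks $R_1,R$, it is expansive, so $\mu_{{\bf R},{\bf B}}$ is the law of the a.s.-convergent series $\sum_{k=1}^\infty {\bf R}^{-k}{\bf b}_k$, where the ${\bf b}_k$ are i.i.d.\ and uniform on the $NM$-point set ${\bf B}$. Because the correspondence $(i,j)\mapsto {\bf b}_{i,j}$ is a bijection onto ${\bf B}$, sampling ${\bf b}_k$ uniformly is the same as sampling $i_k$ uniformly on $\{1,\dots,N\}$ and, independently, $j_k$ uniformly on $\{1,\dots,M\}$. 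I would therefore introduce the auxiliary product space $\Omega_1\times\Omega_2$, where $\Omega_2=\{1,\dots,M\}^{\bn}$ carries the uniform product measure $\mathbb{Q}$, together with the map $\Phi(\omega,\eta)=\sum_{k=1}^\infty {\bf R}^{-k}{\bf b}_{i_k,j_k}$. Using the block form of ${\bf R}^{-k}$ recorded above, this map reads $\Phi(\omega,\eta)=\big(\pi(\omega),\,g(\omega)+\sum_{k}R^{-k}d_{i_k,j_k}\big)$, and the first step is to observe that $\Phi_{*}(\mathbb{P}\times\mathbb{Q})=\mu_{{\bf R},{\bf B}}$.

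With this identification in hand, for any bounded Borel $f$ one has
$$\int f\,d\mu_{{\bf R},{\bf B}}=\int_{\Omega_1\times\Omega_2}f(\Phi(\omega,\eta))\,d(\mathbb{P}\times\mathbb{Q}),$$
and I would apply Fubini to integrate out $\eta$ first. For a fixed $\omega$ the inner integral is $\int_{\Omega_2}f\big(\pi(\omega),g(\omega)+\sum_k R^{-k}d_{i_k,j_k}\big)\,d\mathbb{Q}(\eta)$. Since the pushforward of $\mathbb{Q}$ under $\eta\mapsto\sum_k R^{-k}d_{i_k,j_k}$ is precisely $\mu_\omega=\mu_x^2$ (with $x=\pi(\omega)$), which is supported on $X_2(\omega)=X_2(x)$, this inner integral equals $\int_{X_2(x)}f(x,y+g(x))\,d\mu_x^2(y)$.

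It remains to convert $\int_{\Omega_1}(\cdots)\,d\mathbb{P}(\omega)$ into $\int_{X_1}(\cdots)\,d\mu_1(x)$. Writing $G(\omega)$ for the inner integral, I would note that $G$ factors through $\pi$, that is $G=\tilde G\circ\pi$ holds $\mathbb{P}$-a.e., because the fiber data $g(x),\mu_x^2,X_2(x)$ were defined precisely as $g(\pi^{-1}(x)),\mu_{\pi^{-1}(x)},X_2(\pi^{-1}(x))$ on the set where $\pi$ is injective. The elementary pushforward identity together with $\mu_1=\mathbb{P}\circ\pi^{-1}$ then yields $\int_{\Omega_1}G\,d\mathbb{P}=\int_{X_1}\tilde G\,d\mu_1$, which is the asserted formula. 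The only genuine obstacle is the well-definedness of this last change of variables: one must know that $\pi$ is injective off a $\mu_1$-null set, so that $\tilde G$ is unambiguous $\mu_1$-a.e. This is exactly the no-overlap property of $\mu_1$, guaranteed by \cite[Theorem 1.7]{DL15} since $(R_1,B_1,L_1)$ is a Hadamard triple, and it is the same fact already invoked when the fiber objects were introduced. The remaining points are routine: the a.s.\ convergence of the defining series (immediate from the expansiveness of ${\bf R}$), the joint measurability of $(\omega,\eta)\mapsto f(\Phi(\omega,\eta))$ required to invoke Fubini, and the identification of $\eta\mapsto\sum_k R^{-k}d_{i_k,j_k}$ as a realization of $\mu_\omega$, all of which follow directly from the definitions.
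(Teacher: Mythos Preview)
The paper does not supply its own proof of this lemma; it is quoted directly from \cite[Lemma~4.4]{DJ07d}. There is therefore nothing in the present paper to compare against, only the correctness of your argument to assess.

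Your argument is correct. Realizing $\mu_{{\bf R},{\bf B}}$ as the law of $\sum_{k\ge1}{\bf R}^{-k}{\bf b}_k$ with ${\bf b}_k$ i.i.d.\ uniform on ${\bf B}$, and then factoring the uniform measure on ${\bf B}$ as the product of the uniform measures on $\{1,\dots,N\}$ and $\{1,\dots,M\}$ via the bijection $(i,j)\mapsto(a_i,d_{i,j})^T$, is exactly the right structural observation. The block computation ${\bf R}^{-k}{\bf b}_{i_k,j_k}=(R_1^{-k}a_{i_k},\,D_ka_{i_k}+R^{-k}d_{i_k,j_k})^T$ matches the formulas for $\pi$, $g$ and $X_2(\omega)$ recorded in the paper, and your identification of the $\eta$-pushforward with $\mu_\omega$ is immediate from the definition of $\mu_\omega$ as the infinite convolution of the $\delta_{R^{-k}B(i_k)}$. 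The only point requiring care is the passage from $\int_{\Omega_1}\,d\mathbb{P}$ to $\int_{X_1}\,d\mu_1$, and you handle it correctly: the fiber objects $g(x),\mu_x^2,X_2(x)$ were defined via $\pi^{-1}$ on the set where $\pi$ is injective, and the no-overlap property of $\mu_1$ (guaranteed because $(R_1,B_1,L_1)$ is a Hadamard triple) makes this a full-$\mu_1$-measure set. Fubini and the routine measurability checks you list are indeed routine.
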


\begin{lemma}\label{lem1.24}\cite[Lemma 4.5]{DJ07d}
If $\Lambda_1$ is a spectrum for the measure $\mu_1$, then
$$F(y):=\sum_{\lambda_1\in\Lambda_1}|\widehat{\mu_{{\bf R},{\bf B}}}(x+\lambda_1,y)|^2=\int_{X_1}|\widehat\mu_s^2(y)|^2\,d\mu_1(s),\quad(x\in\br^r,y\in\br^{d-r}).$$
\end{lemma}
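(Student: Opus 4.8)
The plan is to express the Fourier transform of $\mu_{{\bf R},{\bf B}}$ through the fiber disintegration of Lemma \ref{lem1.23} and then recognize the sum over $\Lambda_1$ as Parseval's identity in $L^2(\mu_1)$. Following the decomposition $\br^{d+r}=\br^r\times\br^d$ of Definition \ref{def1.5}, write a generic point as $(\xi,\eta)$ with $\xi\in\br^r$ and $\eta\in\br^{d}$, and apply Lemma \ref{lem1.23} to the bounded Borel function $f(u,v)=e^{-2\pi i(\langle\xi,u\rangle+\langle\eta,v\rangle)}$. Since $f(u,v+g(u))=e^{-2\pi i\langle\xi,u\rangle}\,e^{-2\pi i\langle\eta,g(u)\rangle}\,e^{-2\pi i\langle\eta,v\rangle}$, the factors that do not involve the fiber variable $v$ pull out of the inner integral, and the inner integral $\int_{X_2(u)}e^{-2\pi i\langle\eta,v\rangle}\,d\mu_u^2(v)$ is exactly $\widehat{\mu_u^2}(\eta)$. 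This yields
$$\widehat{\mu_{{\bf R},{\bf B}}}(\xi,\eta)=\int_{X_1}e^{-2\pi i\langle\xi,u\rangle}\,e^{-2\pi i\langle\eta,g(u)\rangle}\,\widehat{\mu_u^2}(\eta)\,d\mu_1(u).$$

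Next I would fix $\eta=y$ and set $h_y(u):=e^{-2\pi i\langle y,g(u)\rangle}\,\widehat{\mu_u^2}(y)$, a bounded function of $u\in X_1$ (bounded because $|\widehat{\mu_u^2}(y)|\le 1$, being the Fourier transform of a probability measure), hence an element of $L^2(\mu_1)$. Substituting $\xi=x+\lambda_1$ and splitting $e^{-2\pi i\langle x+\lambda_1,u\rangle}=e^{-2\pi i\langle\lambda_1,u\rangle}\,e^{-2\pi i\langle x,u\rangle}$, the displayed identity shows that $\widehat{\mu_{{\bf R},{\bf B}}}(x+\lambda_1,y)$ is precisely the $\lambda_1$-th Fourier coefficient, relative to the exponential system $E(\Lambda_1)$, of the function $u\mapsto e^{-2\pi i\langle x,u\rangle}h_y(u)$ in $L^2(\mu_1)$. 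Because $\Lambda_1$ is a spectrum for $\mu_1$, the system $E(\Lambda_1)$ is an orthonormal basis of $L^2(\mu_1)$, so Parseval's identity gives
$$\sum_{\lambda_1\in\Lambda_1}\bigl|\widehat{\mu_{{\bf R},{\bf B}}}(x+\lambda_1,y)\bigr|^2=\int_{X_1}\bigl|e^{-2\pi i\langle x,u\rangle}h_y(u)\bigr|^2\,d\mu_1(u)=\int_{X_1}\bigl|\widehat{\mu_u^2}(y)\bigr|^2\,d\mu_1(u),$$
where the last equality uses that both phase factors $e^{-2\pi i\langle x,u\rangle}$ and $e^{-2\pi i\langle y,g(u)\rangle}$ have modulus one. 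This is the asserted formula, and it also makes transparent why the left-hand side is independent of the choice of $x$, so that writing $F(y)$ is legitimate.

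Each individual step is routine, so the points needing care are the measure-theoretic underpinnings rather than the algebra. I would first note that Lemma \ref{lem1.23} applies directly because the exponential $f$ is a bounded Borel function, so no truncation or approximation argument is needed, and that $h_y\in L^2(\mu_1)$ follows from the uniform bound above together with $\mu_1$ being a probability measure. The one genuinely substantive issue is that the fiber objects $\mu_u^2$, $X_2(u)$ and the translation $g(u)$ are defined only for $\mu_1$-a.e.\ $u$, via the a.e.\ bijective correspondence $\pi^{-1}$ between $\Omega_1$ and $X_1$; I would therefore rely on the no-overlap property of $\mu_1$ recorded just before the statement to guarantee that $u\mapsto\widehat{\mu_u^2}(y)$ is $\mu_1$-measurable and that all the integrals above are over full-measure sets. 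This well-definedness and measurability of the disintegration is the main obstacle, but in the quasi-product setting it has been arranged in advance, so in practice it reduces to invoking the cited correspondence.
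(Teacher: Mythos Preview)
Your proof is correct. The paper does not actually prove this lemma; it is quoted from \cite[Lemma 4.5]{DJ07d} without argument. That said, your approach---express $\widehat{\mu_{{\bf R},{\bf B}}}$ via the disintegration of Lemma~\ref{lem1.23} and then apply Parseval for the spectrum $\Lambda_1$ of $\mu_1$---is exactly the standard one, and the paper itself invokes the same Parseval step inline in the proof of Theorem~\ref{pr1.3} (the converse direction), so there is nothing to compare.
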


We recall also the Jorgensen-Pedersen Lemma about checking when $\Lambda$ is a spectrum for $\mu$.

\begin{lemma}\label{JP}\cite{JP98}
$\Lambda$ is a spectrum for a probability measure $\mu$  on ${\mathbb R}^d$ if and only if
$$
Q(\xi): = \sum_{\lambda\in\Lambda}|\widehat{\mu}(\xi+\lambda)|^2\equiv1.
$$
Moreover, if $\Lambda$ is an orthogonal set, then $Q$ is an entire function on ${\mathbb C}^d$ with $0\leq Q(x)\leq 1$ for $x\in\br^d$.
\end{lemma}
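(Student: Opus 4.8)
The plan is to reduce everything to elementary Hilbert-space geometry of the exponentials $e_\xi(x):=e^{2\pi i\langle\xi,x\rangle}$ inside $L^2(\mu)$. The starting point is the inner product identity
$$\ip{e_\xi}{e_\lambda}_{L^2(\mu)}=\int e^{2\pi i\langle\xi-\lambda,x\rangle}\,d\mu(x)=\widehat{\mu}(\lambda-\xi),$$
which gives $\|e_\xi\|_{L^2(\mu)}^2=\widehat{\mu}(0)=1$ for every $\xi\in\br^d$ and lets us rewrite
$$Q(\xi)=\sum_{\lambda\in\Lambda}|\widehat{\mu}(\xi+\lambda)|^2=\sum_{\lambda\in\Lambda}\bigl|\ip{e_{-\xi}}{e_\lambda}_{L^2(\mu)}\bigr|^2,$$
so that $Q(\xi)$ is exactly the total mass of the squared Fourier coefficients of the unit vector $e_{-\xi}$ against the family $E(\Lambda)$.

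For the forward implication, if $\Lambda$ is a spectrum then $E(\Lambda)$ is an orthonormal basis, so Parseval's identity applied to $f=e_{-\xi}$ yields $\sum_{\lambda\in\Lambda}|\ip{e_{-\xi}}{e_\lambda}|^2=\|e_{-\xi}\|^2=1$, i.e. $Q\equiv1$. For the converse I would first extract orthonormality from $Q\equiv1$ by a pointwise evaluation: fixing $\lambda_0\in\Lambda$ and setting $\xi=-\lambda_0$, the term $\lambda=\lambda_0$ contributes $|\widehat{\mu}(0)|^2=1$, every term is nonnegative, and the total equals $Q(-\lambda_0)=1$; hence all remaining terms vanish, giving $\widehat{\mu}(\lambda-\lambda_0)=\ip{e_{\lambda_0}}{e_\lambda}=0$ for $\lambda\neq\lambda_0$, so $E(\Lambda)$ is orthonormal. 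Letting $P$ be the orthogonal projection onto $\Span E(\Lambda)$, we then have $\|Pe_{-\xi}\|^2=\sum_{\lambda\in\Lambda}|\ip{e_{-\xi}}{e_\lambda}|^2=Q(\xi)=1=\|e_{-\xi}\|^2$, so $e_{-\xi}$ lies in the range of $P$ for every $\xi$. Since the linear span of $\{e_\xi:\xi\in\br^d\}$ is total in $L^2(\mu)$ — any $f\in L^2(\mu)$ orthogonal to every $e_\xi$ makes the finite complex measure $f\,d\mu$ have vanishing Fourier transform, forcing $f=0$ $\mu$-a.e. — the closed range of $P$ is all of $L^2(\mu)$, so $E(\Lambda)$ is complete and $\Lambda$ is a spectrum.

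For the ``moreover'' clause assume $E(\Lambda)$ is orthonormal. Then $Q\ge0$ is trivial and $Q(\xi)\le1$ is precisely Bessel's inequality for the unit vector $e_{-\xi}$. For analyticity I would use that $\mu$ is compactly supported, so $\widehat{\mu}$ extends to an entire function on $\C^d$ by Paley--Wiener, and for real $\xi$ one has $|\widehat{\mu}(\xi)|^2=\widehat{\mu}(\xi)\widehat{\mu}(-\xi)$ since $\mu$ is a positive measure; thus each summand extends to the entire function $\zeta\mapsto\widehat{\mu}(\zeta+\lambda)\widehat{\mu}(-\zeta-\lambda)$ and the natural candidate is $\widetilde{Q}(\zeta)=\sum_{\lambda}\widehat{\mu}(\zeta+\lambda)\widehat{\mu}(-\zeta-\lambda)$. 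The one genuine technical point, and the main obstacle, is the locally uniform convergence of this complexified series on $\C^d$: writing $\zeta=\xi+i\eta$, the factor $\widehat{\mu}(\zeta+\lambda)$ equals the Fourier transform at $\xi+\lambda$ of the finite measure $e^{2\pi\langle\eta,x\rangle}\,d\mu$, i.e. $\ip{e^{2\pi\langle\eta,\cdot\rangle}}{e_{\xi+\lambda}}_{L^2(\mu)}$, so Bessel's inequality applied to the bounded functions $e^{\pm2\pi\langle\eta,\cdot\rangle}\in L^2(\mu)$ bounds each factor-series by integrals of $e^{\pm4\pi\langle\eta,x\rangle}$ against $\mu$, which are finite and locally bounded in $\eta$ by compact support; Cauchy--Schwarz on the product then gives a locally bounded tail, and a Weierstrass argument yields that $\widetilde{Q}$ is entire and restricts to $Q$ on $\br^d$. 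The orthonormality-from-$Q\equiv1$ evaluation and the Parseval/totality steps are routine, so in the write-up I would present these in full and, for the analytic estimate, either give the Cauchy--Schwarz bound above or simply defer to Jorgensen--Pedersen \cite{JP98}.
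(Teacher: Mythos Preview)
Your argument is correct. The paper itself does not prove this lemma at all; it simply cites \cite{JP98} and uses the statement as a black box, so there is no in-paper proof to compare against. Your write-up supplies exactly the standard Hilbert-space argument (Parseval for the forward direction, the evaluation $\xi=-\lambda_0$ to recover orthonormality, totality of exponentials for completeness, Bessel for $Q\le1$), which is the same route taken in \cite{JP98}.

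One small remark: the ``entire function'' assertion genuinely needs compact support of $\mu$, which you invoke correctly but which the lemma as stated in the paper omits from its hypotheses (it is a standing assumption in the paper's introduction). Your Cauchy--Schwarz/Bessel estimate for locally uniform convergence of the complexified series is the right idea and is essentially how Jorgensen and Pedersen handle it; deferring that estimate to \cite{JP98} would also be entirely acceptable here.
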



\medskip

\begin{proof}[Proof of Theorem \ref{pr1.3}] Assume that $\Lambda_2$ is a spectrum for $\mu_\omega$ for ${\mathbb P}$-a.e. $\omega$. Let $$
E = \{\omega: \mbox{$\Lambda_2$ is a spectrum for $\mu_\omega$}\}.
$$
 Then ${\mathbb P}(E\cap(\Omega_1\setminus F))=1$. The set of points $x\in\Omega_1$ such that   $\Lambda_2$ is a spectrum for $\mu_x^2$ is exactly equal to $\pi^{-1}(x)\in E\cap (\Omega_1\setminus F)$. This shows $\Lambda_2$ is a spectrum for $\mu_x^2$ for $\mu_1$-a.e. $x$.
 We now check that $\Lambda_1\times\Lambda_2$ is a spectrum for $\mu_{{\bf R},{\bf B}}$. For $(x,y)\in\br^{d+r}$ we have , with Lemma \ref{lem1.24} and Fubini's theorem,
$$\sum_{\lambda_1\in\Lambda_1}\sum_{\lambda_2\in\Lambda_2}|\widehat\mu_{{\bf R},{\bf B}}(x+\lambda_1,y+\lambda_2)|^2=\sum_{\lambda_2\in\Lambda_2}\int_{X_1}|\widehat{\mu_x^2}(y+\lambda_2)|^2\,d\mu_1(x)$$$$
=\int_{X_1}\sum_{\lambda_2\in\Lambda_2}|\widehat{\mu_x^2}(y+\lambda_2)|^2\,d\mu_1(x)=\int_{X_1}1=1.$$
Thus $\Lambda_1\times\Lambda_2$ is a spectrum for $\mu_{\bf R,\bf B}$.

\medskip

For the converse, assume $\Lambda_1\times\Lambda_2$ is a spectrum for $\mu_{\bf R,\bf B}$. Take $\lambda_1\neq \lambda_1'$ in $\Lambda_1$ and $\lambda_2,\lambda_2'$ in $\Lambda_2$. Then
$$0=\widehat{\mu_{{\bf R}.{\bf B}}}(\lambda_1-\lambda_1',\lambda_2-\lambda_2')=\int_{X_1}e^{-2\pi i(\lambda_1-\lambda_1')\cdot x}\widehat{\mu_x^2}(\lambda_2-\lambda_2')\,d\mu_1(x).$$
But $\Lambda_1-\lambda_1'$ is a spectrum for $\mu_1$ so $\widehat{\mu_x^2}(\lambda_2-\lambda_2')=0$ for $\mu_1$-a.e. $x$. Since $\Lambda_2$ is countable, this implies that $\Lambda_2$ is an orthogonal set for $\mu_x^2$, for $\mu_1$-a.e. $x$. and in particular
\begin{equation}
\sum_{\lambda_2\in\Lambda_2}|\widehat{\mu_x^2}(y+\lambda_2)|^2\leq 1 \mbox{ for all $y\in\br^{d-r}$ and $\mu_1$-a.e. $x$}.
\label{eq1.3.1}
\end{equation}

Then

$$1=\sum_{\lambda_1\in\Lambda_1}\sum_{\lambda_2\in\Lambda_2}|\widehat\mu_{\bf R,\bf B}(x+\lambda_1,y+\lambda_2)|^2=\sum_{\lambda_2\in\Lambda_2}\sum_{\lambda_1\in\Lambda_1}\left|\int_{X_1}e^{-2\pi i \ip{ (\lambda_1+x)}{t}}\widehat{\mu_t^2}(y+\lambda_2)\,d\mu_1(t) \right|^2$$
$$=\sum_{\lambda_2\in\Lambda_2}\int_{X_1}|\widehat{\mu_t^2}(y+\lambda_2)|^2\,d\mu_1(t) \mbox{ (by the Parseval equality for the spectrum $\Lambda_1$)}$$
$$=\int_{X_1}\sum_{\lambda_2\in\Lambda_2}|\widehat{\mu_t^2}(y+\lambda_2)|^2\,d\mu_1(t) \leq \int_{X_1}1=1.$$
But combining with \eqref{eq1.3.1}, we get that, for a fixed $y$,

$$\sum_{\lambda_2\in\Lambda_2}|\widehat{\mu_t^2}(y+\lambda_2)|^2=1$$
for $\mu_1$-a.e. $t$. As $Q(y) : =\sum_{\lambda_2\in\Lambda_2}|\widehat{\mu_x^2}(y+\lambda_2)|^2$ is a continuous function, by Lemma \ref{JP} taking a countable dense set of $y$, we get that $\Lambda_2$ is a spectrum for $\mu_t^2$, for $\mu_1$-a.e. $t$ and this means $\Lambda_2$ is a spectrum, ${\mathbb P}$-almost surely.
\end{proof}

\medskip

In rest of the section, we will prove the spectral property result in Theorem \ref{th1.4}. To this end, we need to analyze a dynamical system generated by the Hadamard triple, for a detailed account of this dynamical system see \cite{DJ06,DJ07d}.

\begin{definition}\label{definv}
Let $(R,B,L)$ be a Hadamard triple. We define the function
$$m_B(x)=\frac{1}{\#B}\sum_{b\in B}e^{2\pi i \ip{b}{x}},\quad (\xi\in\br^d).$$
The Hadamard triple condition implies that $\delta_{R^{-1}B}$ is a spectral measure with a spectrum $L$ and $m_{R^{-1}B}$ is the Fourier transform of the Dirac measure.  Lemma \ref{JP} implies that
\begin{equation}\label{M_B}
\sum_{\ell\in L}|m_B((R^T)^{-1}(x+\ell))|^2=1, \mbox{or} \ \sum_{\ell\in L}|m_B(\tau_{\ell}(x))|^2=1,
\end{equation}
where we define the maps
$$\tau_{\ell}(x)=(R^T)^{-1}(x+\ell),\quad(x\in\br^d,\ell\in L), \ \mbox{and} \ \tau_{\ell_1...\ell_m} = \tau_{\ell_1}\circ...\circ\tau_{\ell_m}.
$$
A closed set $K$ in $\br^d$ is called {\it invariant (with respect to the system $(R,B,L)$)} if, for all $x\in K$ and all $\ell\in L$
$$
 m_B(\tau_{\ell}(x))>0 \ \Longrightarrow \ \tau_{\ell}(x)\in K.
 $$
We say that {\it the transition, using $\ell$, from $x$ to $\tau_\ell( x)$ is possible}, if $\ell\in  L$ and $m_B(\tau_{\ell} (x))>0$. A compact invariant set is called {\it minimal} if it does not contain any proper compact invariant subset.

For $\ell_1,\dots,\ell_m\in L$, the cycle $\mathcal C(\ell_1,\dots,\ell_m)$ is the set
$$\mathcal C(\ell_1,\dots,\ell_m)=\{x_0,\tau_{\ell_m}(x_0),\tau_{\ell_{m-1}\ell_m}(x_0),\dots, \tau_{\ell_2\dots\ell_m}(x_0)\},$$
where $x_0:=\wp(\ell_1,\dots, \ell_m)$ is the fixed point of the map $\tau_{\ell_1...\ell_m}$. i.e. $\tau_{\ell_1...{\ell_m}}(x_0)=x_0$.
The cycle $\mathcal C(\ell_1,\dots, \ell_m)$ is called an {\it extreme cycle for $(R,B,L)$} if $|m_B(x)|=1$ for all $x\in \mathcal C(\ell_1,\dots, \ell_m)$.
\end{definition}

\medskip

\begin{remark}\label{remark} Here are some remarks about the properties of extreme cycles.
\begin{enumerate}
\item For any extreme cycles, the only possible transition is from $x_0$ to $\tau_{\ell_m}(x_0)$  since $|m_B(\tau_{\ell_m}(x_0))|=1$ and (\ref{M_B}) implies all other must be zero.
\item Given any $c$ in an extreme cycle ${\mathcal C}$, we can always find another point in this cycle $c'$ such that $c = \tau_{\ell}(c')$ for a unique digit $\ell$ defining the cycle. Iterating the process, for any $n\geq 1$, $c = \tau_{\ell_0...\ell_{n-1}}(c')$ for some $c'\in{\mathcal C}$. Rewriting the relation, we have
    \begin{equation}\label{eqremark1}
    -c' = (R^T)^n(-c)+\ell_0 +R^T\ell_1+...+(R^T)^{n-1}\ell_{n-1}.
    \end{equation}
In particular, $(R^T)^nc$ is congruent modulo $\bz^d$ to another cycle point.
\item		
If $0\in B$, we have
\begin{equation}\label{eqremark2}
\langle R^n b,c\rangle\in\bz, \ \forall n\ge 0  \ \mbox{and} \ b\in B.
\end{equation}
First, $|m_B(c)|=1$ implies that $\langle b,c\rangle\in \bz$ for all $b\in B$ (we have equality in a triangle inequality so all the terms of the sum that defines $m_B$ must be equal to 1, since $0\in B$). In general, from (\ref{eqremark1}) and the fact that $c'$ is an extreme cycle point, $m_B(c')=1$ and $\ip{b}{c'}\in \bz$. As we know $\langle b,\ell\rangle\in \bz$, so we must have $\ip{R^nb}{c} = \ip{b}{(R^T)^nc}\in\bz$.
\end{enumerate}
\end{remark}

The following theorem shows the structure of  minimal compact invariant sets and we will use it throughout the rest of the paper.

\begin{theorem}\label{thccr}\cite[Theorem 2.8]{CCR}
Let $\M$ be a minimal compact invariant set contained in the zero set of an entire function $h$ on $\br^d$.
\begin{enumerate}
	\item There exists a proper rational subspace $V$ (can be $\{0\}$) invariant for $R^T$ such that $\M$ is contained in the union $\mathcal R$ of finitely many translates of $V$.
	\item This union contains the translates of $V$ by the elements of a cycle $\mathcal C(\ell_1,\dots, \ell_m)$ in $\M$, and $h$ is zero on $x+V$ for all $x\in\mathcal C(\ell_1,\dots,\ell_m)$.
	\item If the hypothesis ``(H) modulo $V$'' is satisfied, i.e., $(R^T)^{-1}(e_1-e_1')+(R^T)^{-2}(e_2-e_2')+\dots +(R^T)^{-p}(e_p-e_p')\in V$ implies $e_1-e_1',e_2-e_2',\dots, e_p-e_p'\in V$ for all $e_1,\dots,e_p,e_1',\dots, e_p'\in L$, then
	$$\mathcal R=\{x_0+V,\tau_{\ell_m}(x_0)+V,\dots,\tau_{\ell_2\dots \ell_m}(x_0)+V\}$$
	where $x_0=\wp(\ell_1,\dots \ell_m)$ and every possible transition from a point in $\M\cap (\tau_{\ell_q\dots \ell_m}(x_0)+V)$ leads to a point in $\M\cap(\tau_{\ell_{q-1}\dots\ell_m}(x_0)+V)$ for all $1\leq q\leq m$, with $\ell_0=\ell_m$.
	\item The union $\mathcal R$ is invariant.
\end{enumerate}
\end{theorem}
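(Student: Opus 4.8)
The plan is to treat this as a rigidity statement: the zero set of a nonzero entire function is an analytic variety, and such a variety cannot absorb a genuinely fractal self-similar set, so the hypothesis $\M\subseteq\{h=0\}$ must collapse $\M$ onto affine pieces. Concretely I would extract the subspace $V$ by a blow-up (renormalization) argument centered at a cycle point, then pass to the quotient $\br^d/V$ to obtain finiteness and the cyclic structure, and finally read off statements (iii)--(iv) from the admissibility dynamics together with hypothesis (H).

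First I would use minimality to produce a base point. Since $\M$ is compact and invariant, the closure of any admissible forward orbit is a closed invariant subset and hence equals $\M$; thus every point has dense forward orbit and $\M$ is richly recurrent. Fixing such a point and using density of its orbit, I choose a word $w=\ell_1\cdots\ell_m$ over $L$ whose contraction $\tau_w=\tau_{\ell_1\cdots\ell_m}$ returns the point close to itself; its unique fixed point $x_0=\wp(\ell_1,\dots,\ell_m)$ then lies in $\M$ because $\M$ is closed and invariant, and it serves as the center of the renormalization.

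The crux, and the main obstacle, is the construction of $V$. If $\M$ is finite it is itself a single cycle and we take $V=\{0\}$; so assume $\M$ is infinite and accumulates at $x_0$, where it is approximately self-similar under $\tau_w$, whose linear part is $(R^T)^{-m}$. I would form the renormalized functions $h_k(z)=h\bigl(x_0+(R^T)^{-km}z\bigr)$, rescale each by a normalizing factor keeping the family locally bounded and nondegenerate (this is exactly where the order of vanishing of $h$ at $x_0$ must be controlled), and extract via normal families a nonzero entire limit $h_\infty$ vanishing on the blow-up cone $\Gamma=\lim_k (R^T)^{km}(\M-x_0)$. Since $\Gamma$ is invariant under $(R^T)^{m}$ and carries the zeros of the nonzero entire $h_\infty$, it must contain a nontrivial $R^T$-invariant subspace $V$, which is proper because $h\not\equiv0$. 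The arithmetically delicate point is that $V$ may be chosen \emph{rational}; this I would derive from the integrality relations forced on cycle points (cf. the Remark, where $\langle R^n b,c\rangle\in\bz$), rather than from integrality of $R$ alone. By construction $h$ vanishes on $x_0+V$, giving (i), and transporting along the cycle identified below yields $h\equiv0$ on $c+V$ for every $c\in\mathcal C(\ell_1,\dots,\ell_m)$, which is (ii).

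Finally I would pass to the quotient $\br^d/V$. Because $V$ is $R^T$-invariant the maps $\tau_\ell$ descend to contractions $\bar\tau_\ell$ of $\br^d/V$, under which $\bar\M$ is again compact, invariant, and minimal; by the maximality built into the choice of $V$ there remain no further blow-up directions, so $\bar\M$ is finite, and a finite minimal invariant set of contractions is a single periodic orbit, namely the cycle $\mathcal C(\ell_1,\dots,\ell_m)$ whose cosets are the finitely many translates forming $\mathcal R$. For (iii) I would invoke ``(H) modulo $V$'': it guarantees that distinct finite $L$-addresses stay distinct modulo $V$, so each $\bar\tau_\ell$ is injective on the cosets in play and the transition graph on $\{x_0+V,\dots,\tau_{\ell_2\cdots\ell_m}(x_0)+V\}$ is forced to be exactly the backward $m$-cycle, every admissible transition from $\tau_{\ell_q\cdots\ell_m}(x_0)+V$ landing in $\tau_{\ell_{q-1}\cdots\ell_m}(x_0)+V$. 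Statement (iv), the invariance of $\mathcal R$, is then immediate, since admissible transitions never leave this finite union of cosets.
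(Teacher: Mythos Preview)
The paper does not prove this theorem: it is quoted as \cite[Theorem 2.8]{CCR} and used throughout Section~3 and the appendix as a black box, with no argument supplied. There is therefore no ``paper's own proof'' to compare your sketch against.

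That said, your outline has genuine gaps. First, the production of a cycle inside $\M$ is more delicate than you indicate: admissibility of a transition $\tau_\ell$ depends on the point, not only on the digit $\ell$, so the fact that some word $w$ brings a point $y\in\M$ near itself does not guarantee that the iterates $\tau_w^k(y)$ remain admissible, nor that the fixed point $x_0=\wp(w)$ lies in $\M$. Second, the renormalization step is not a proof as written: in several complex variables there is no canonical normalization of $h(x_0+(R^T)^{-km}z)$ ensuring a nontrivial normal-families limit $h_\infty$, and even granting one, the passage from ``$h_\infty$ vanishes on the blow-up cone $\Gamma$'' to ``$\Gamma$ contains a nontrivial $R^T$-invariant linear subspace'' is asserted rather than argued. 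Your justification for the rationality of $V$ is a gesture toward the integrality relations in Remark~\ref{remark}, but those relations concern extreme cycle points, which you do not yet have. Finally, the sentence ``by the maximality built into the choice of $V$ there remain no further blow-up directions, so $\bar\M$ is finite'' is circular: finiteness of $\bar\M$ is precisely the content of part~(i) once $V$ is in hand, and it needs its own argument (in the cited reference this is where the entire-function hypothesis on $h$ does real work, via the structure of analytic varieties).
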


In particular, from (\ref{M_B}), extreme cycles are clearly compact invariant sets which correspond to the case $V=\{0\}$ (if needed, we can always take the entire function $h$ to be 0, in Theorem \ref{thccr}). However, the extreme cycles are not the only minimal compact invariant sets (see \cite{DJ07d} for some examples). We isolate this special case in the following definition.

\begin{definition}\label{definv2}
We say that the Hadamard triple $(R,B,L)$ is {\it dynamically simple} if the only minimal compact invariant set are extreme cycles. For a Hadamard triple $(R,B,L)$, the {\it orthonormal set $\Lambda$ generated by extreme cycles} is the smallest set such that
 \begin{enumerate}
 \item it contains $-{\mathcal C}$ for all extreme cycles ${\mathcal C}$ for $(R,B,L)$
 \item it satisfies $R^T\Lambda+L\subset \Lambda$.
 \end{enumerate}
When this set $\Lambda$ is a spectrum (see Theorem \ref{th_dynamic} below), we call it {\it the dynamically simple spectrum}. 

More generally, the {\it set generated by an invariant subset $A$} of $\br^d$, is the smallest set which contains $-A$ and satisfies (ii).
\end{definition}

\medskip

\begin{theorem}\label{th_dynamic}
Let  $(R,B,L)$ be a dynamically simple Hadamard triple. Then the orthonormal set $\Lambda$ generated by extreme cycles is a spectrum for the self-affine measure $\mu_{R,B}$ and $\Lambda$ is explicitly given by
$$\Lambda=\{\ell_0+R^T\ell_1+\dots (R^T)^{n-1}\ell_{n-1}+(R^T)^n(-c) : \ell_0,\dots,\ell_{n-1}\in L, n\geq0, \ c\mbox{ are extreme cycle points}\}.
$$
Moreover, if $(R,B,L)$ is a Hadamard triple on ${\mathbb R}^1$, it must be dynamically simple.
\end{theorem}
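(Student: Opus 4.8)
The plan is to prove four things in sequence: the explicit form of $\Lambda$, its orthonormality, the completeness of $E(\Lambda)$ (the substance), and finally the one-dimensional dynamical simplicity. First I would unwind Definition \ref{definv2}. The set on the right-hand side of the displayed formula contains $-c$ for every extreme cycle point $c$ (take $n=0$) and is closed under $\eta\mapsto R^T\eta+\ell$ (applying this to a length-$n$ word produces a length-$(n+1)$ word), while conversely any set with these two properties must contain each such element; hence it is the smallest such set and equals $\Lambda$. Orthonormality, i.e. $\widehat{\mu}(\lambda-\lambda')=0$ for $\lambda\neq\lambda'$, I would record from the Hadamard/cycle structure: differences of the explicit words, after stripping common powers of $R^T$, are carried by some $\tau$-iterate onto a zero of $m_B$, using that $L$ forces orthogonality of the finite exponential sums and that cycle points satisfy the integrality $\langle R^n b,c\rangle\in\bz$ of Remark \ref{remark}(iii). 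This justifies calling $\Lambda$ ``the orthonormal set generated by extreme cycles.''

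By Lemma \ref{JP} it then suffices to prove that the entire function $Q_\Lambda(\xi)=\sum_{\lambda\in\Lambda}|\widehat{\mu}(\xi+\lambda)|^2$, which satisfies $0\le Q_\Lambda\le 1$, is identically $1$. The engine is a transfer-operator identity. Using $|\widehat{\mu}(\eta)|=|m_B((R^T)^{-1}\eta)|\,|\widehat{\mu}((R^T)^{-1}\eta)|$ and the last-digit decomposition $\Lambda=\bigsqcup_{\ell\in L}(R^T\Lambda+\ell)$ (which I would verify: every word decomposes as $\ell_0+R^T\lambda'$ with $\lambda'\in\Lambda$, cycle points included via the cyclic relation of Remark \ref{remark}(ii), and the union being disjoint since the elements of $L$ are distinct modulo $R^T\bz^d$ by the Hadamard condition), one computes for $\eta=\xi+R^T\lambda'+\ell$ that $(R^T)^{-1}\eta=\tau_\ell(\xi)+\lambda'$. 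The crucial simplification is $m_B(\tau_\ell(\xi)+\lambda')=m_B(\tau_\ell(\xi))$: indeed, pairing an arbitrary word $\lambda'\in\Lambda$ with $b\in B$ and applying Remark \ref{remark}(iii) gives $\langle b,\lambda'\rangle\in\bz$ for every $b\in B$, so $\lambda'$ is a period of $m_B$. This yields $Q_\Lambda=T Q_\Lambda$, where $(Tf)(\xi)=\sum_{\ell\in L}|m_B(\tau_\ell(\xi))|^2 f(\tau_\ell(\xi))$; since $T\mathbf{1}=\mathbf{1}$ by (\ref{M_B}), the nonnegative entire function $h:=1-Q_\Lambda$ also satisfies $Th=h$.

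Next comes the maximal-set argument. Because the maps $\tau_\ell$ are contractions with a common compact attractor, iterating $h=Th=T^k h$ writes $h(\xi)$ as a convex combination of values $h(\tau_w(\xi))$ over length-$k$ words $w$, with $\tau_w(\xi)$ within distance tending to $0$ of the attractor; this shows $\sup h$ equals the supremum over a fixed compact neighborhood of the attractor and is attained. Let $M=\max h$ and $\mathcal{Z}=\{h=M\}=\{Q_\Lambda=1-M\}$. If $\xi\in\mathcal{Z}$, then $M=\sum_{\ell}|m_B(\tau_\ell(\xi))|^2 h(\tau_\ell(\xi))$ is a convex combination of values $\le M$, forcing $h(\tau_\ell(\xi))=M$ whenever $m_B(\tau_\ell(\xi))\neq 0$; thus $\mathcal{Z}$ is invariant in the sense of Definition \ref{definv}. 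Since $\mathcal{Z}$ is the zero set of the entire function $Q_\Lambda-(1-M)$, a minimal compact invariant subset $\M\subseteq\mathcal{Z}$ falls under Theorem \ref{thccr}, and dynamical simplicity forces $\M$ to be an extreme cycle $\mathcal{C}$. But for $c\in\mathcal{C}$ we have $-c\in\Lambda$ by Definition \ref{definv2}(i), so $Q_\Lambda(c)\ge |\widehat{\mu}(0)|^2=1$, i.e. $h(c)=0$; as $c\in\mathcal{Z}$ gives $h(c)=M$, we get $M=0$ and $Q_\Lambda\equiv 1$. I expect the main obstacle to be exactly this step: extracting a genuine nonempty compact invariant set from the level set of $1-Q_\Lambda$ over the noncompact $\br^d$, which is what lets the structure theorem engage and converts dynamical simplicity into completeness.

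For the final assertion, let $(R,B,L)$ be a Hadamard triple on $\br^1$, so $R^T=R$ is an integer and the only proper rational $R$-invariant subspace is $V=\{0\}$. By Theorem \ref{thccr}(i) any minimal compact invariant set $\M$ lies in a finite union of translates of $\{0\}$, hence is finite; a finite minimal compact invariant set is a single cycle $\mathcal{C}$, and because each one-point translate admits a unique target and $\ell\mapsto\tau_\ell$ is injective, from each cycle point exactly one transition is possible. Then $\sum_{\ell}|m_B(\tau_\ell(x))|^2=1$ with a single nonzero term forces $|m_B|=1$ along $\mathcal{C}$, so $\mathcal{C}$ is extreme. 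Therefore every minimal compact invariant set is an extreme cycle, i.e. the triple is dynamically simple, and the first part then produces the spectrum $\Lambda$.
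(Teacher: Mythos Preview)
Your proposal is correct and follows essentially the same route as the paper's Appendix: Lemma \ref{lem4.1} for the explicit form and the identity $\Lambda=R^T\Lambda+L$, Proposition \ref{pr4.1} for orthogonality, and then the Ruelle transfer-operator argument showing $Q_\Lambda$ is a fixed point, extracting a minimal compact invariant subset of the level set where $Q_\Lambda$ attains its minimum (your $h=1-Q_\Lambda$ and its maximum is the same up to sign), and invoking dynamical simplicity to force this set to be an extreme cycle, where $Q_\Lambda=1$. The paper packages the compactness step as Lemma \ref{lem4.2} (a $\tau$-invariant ball $\mathcal B_r$) rather than your iteration-toward-the-attractor argument, and for $\br^1$ it notes that the zero set of the nonzero entire function $Q_\Lambda-c_r$ is already discrete, whereas you go straight to Theorem \ref{thccr}(i) to force $V=\{0\}$; both paths land on a finite minimal invariant set and finish the same way.

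One small correction: your justification for the disjoint decomposition $\Lambda=\bigsqcup_{\ell\in L}(R^T\Lambda+\ell)$ (``elements of $L$ are distinct modulo $R^T\bz^d$'') presupposes $\Lambda-\Lambda\subset\bz^d$, which can fail since extreme cycle points need not be integral. Disjointness does hold, and it follows from the $m_B$-periodicity you already established: if $R^T\lambda_1+\ell_1=R^T\lambda_2+\ell_2$ with $\ell_1\neq\ell_2$ then $m_B(\lambda_2-\lambda_1)=m_B\bigl((R^T)^{-1}(\ell_1-\ell_2)\bigr)=0$ by the Hadamard condition, contradicting $\langle b,\lambda_i\rangle\in\bz$ for all $b\in B$. (The paper uses this disjointness implicitly when it writes $Q_\Lambda=\mathcal R Q_\Lambda$.)
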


\begin{proof}
This theorem combines results in \cite{DJ06,DJ07d,DJ09}. An independent proof will be given in the appendix of this paper.
\end{proof}
\medskip

Our main theorem leading to main conclusion in the introduction is the following:

\begin{theorem}\label{th1.5}
Assume that the Hadamard triple $({\bf R},{\bf B},{\bf L})$ is in a quasi-product form defined in Definition \ref{def1.5} with $C=0$ and that $({\bf R},{\bf B},{\bf L})$ is dynamically simple. Let $\Lambda_1$  be the orthonormal set generated by extreme cycles for $(R_1,B_1,L_1)$ and suppose that $\Lambda_2$ is the set generated by those cycles which are extreme for \underline{all} triples $(R,B(i),L)$, $i=1,\dots, N_1$. Then  $ \Lambda_1 \times \Lambda_2$ is a spectrum for $({\bf R},{\bf B},{\bf L})$ and $\Lambda_2$ is a spectrum for $\mu_{\omega}$ for ${\mathbb P}$-almost every $\omega$.
\end{theorem}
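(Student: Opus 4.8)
The plan is to identify $\Lambda_1\times\Lambda_2$ with the orthonormal set $\tilde\Lambda$ generated by the extreme cycles of the product triple $(\mathbf R,\mathbf B,\mathbf L)$. Since $(\mathbf R,\mathbf B,\mathbf L)$ is assumed dynamically simple, Theorem \ref{th_dynamic} already tells us that $\tilde\Lambda$ is a spectrum for $\mu_{\mathbf R,\mathbf B}$; hence the equality $\tilde\Lambda=\Lambda_1\times\Lambda_2$ would immediately give the first assertion. The second assertion, that $\Lambda_2$ is a spectrum for $\mu_\omega$ for $\mathbb P$-almost every $\omega$, would then follow from the converse half of Theorem \ref{pr1.3}, provided we also know that $\Lambda_1$ is a spectrum for $\mu_1$.

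First I would determine the extreme cycles of the product triple. Because $C=0$, the matrix $\mathbf R^T$ is block-diagonal, so the maps split as $\tau_{(\ell_1,\ell_2)}(x,y)=((R_1^T)^{-1}(x+\ell_1),(R^T)^{-1}(y+\ell_2))$, and a direct computation gives the factorization $m_{\mathbf B}(x,y)=\frac1N\sum_{i=1}^N e^{2\pi i\langle a_i,x\rangle}\,m_{B(i)}(y)$. From $|m_{\mathbf B}(x,y)|\le\frac1N\sum_i|m_{B(i)}(y)|\le1$ and the standing assumption $0\in B(i)$, analysing the two equalities in the triangle inequality shows that $|m_{\mathbf B}(x,y)|=1$ holds if and only if $|m_{B_1}(x)|=1$ and $m_{B(i)}(y)=1$ for every $i$. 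Combined with the product structure of $\tau$, this shows that the extreme cycle points of $(\mathbf R,\mathbf B,\mathbf L)$ are exactly the pairs $(c^{(1)},c^{(2)})$ where $c^{(1)}$ is an extreme cycle point of $(R_1,B_1,L_1)$ and $c^{(2)}$ is a cycle point extreme for every $(R,B(i),L)$; the reverse inclusion uses that a pair of such cycles can be synchronised into a single product cycle whose length is the least common multiple of the two periods, which is legitimate since $\mathbf L=L_1\times L$.

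Next I would prove $\tilde\Lambda=\Lambda_1\times\Lambda_2$. Let $A_1$ and $A_2$ denote the negatives of the extreme cycle points of $(R_1,B_1,L_1)$ and of the cycle points extreme for all $(R,B(i),L)$, respectively; the previous paragraph identifies the negatives of the extreme cycle points of the product as $A_1\times A_2$. For $\tilde\Lambda\subseteq\Lambda_1\times\Lambda_2$ I would check the two defining properties of Definition \ref{definv2} for the product system: $\Lambda_1\times\Lambda_2$ contains $(-c^{(1)},-c^{(2)})$ for every product extreme cycle point, and it is invariant under $(x,y)\mapsto\mathbf R^T(x,y)+(\ell_1,\ell_2)$ because $\mathbf R^T$ is block-diagonal and $\mathbf L=L_1\times L$; minimality of $\tilde\Lambda$ gives the inclusion. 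For the reverse inclusion I would use the explicit description of Theorem \ref{th_dynamic}, writing each of $\Lambda_1,\Lambda_2,\tilde\Lambda$ as the union over $n\ge0$ of words $\sum_{j=0}^{n-1}(R^T)^j\ell_j+(R^T)^n a$ with $a$ in the respective generating set; by block-diagonality $\tilde\Lambda$ is the set of pairs $(\mu_1,\mu_2)$ admitting representations of the \emph{same} length $n$. The only issue is matching lengths, which I would settle with a padding step: using Remark \ref{remark}, the forward cycle transition rewrites any length-$n$ representation of an element of $\Lambda_1$ (resp. $\Lambda_2$) as a length-$(n+1)$ representation ending at the next cycle point, so every element has representations of all sufficiently large lengths, and matching $\lambda_1,\lambda_2$ at a common length exhibits $(\lambda_1,\lambda_2)\in\tilde\Lambda$.

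It remains to supply the hypothesis of Theorem \ref{pr1.3} that $\Lambda_1$ is a spectrum for $\mu_1$; by Theorem \ref{th_dynamic} this reduces to showing that the factor triple $(R_1,B_1,L_1)$ is itself dynamically simple, to be extracted from the dynamical simplicity of $(\mathbf R,\mathbf B,\mathbf L)$, and this is the step I expect to be the main obstacle. A naive projection does not obviously work: since $m_{B_1}(u)=0$ does not force $m_{\mathbf B}(u,v)=0$ (the functions $m_{B(i)}$ are generally independent), an admissible product transition can project to an inadmissible transition of $(R_1,B_1,L_1)$, so the first-coordinate projection of a product-invariant set need not be invariant for the factor. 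I would instead argue through the structure theorem \ref{thccr}: starting from a minimal compact invariant set $\mathcal M_1$ of $(R_1,B_1,L_1)$ that is not an extreme cycle, I would use the subspace $V$ and the cycle it produces to lift $\mathcal M_1$ to a minimal compact invariant set of the product that is not an extreme cycle, contradicting the hypothesis. A first-marginal argument alone is insufficient here, because completeness of $\Lambda_1\times\Lambda_2$ for $\mu_{\mathbf R,\mathbf B}$ does not transfer to completeness of $\Lambda_1$ for $\mu_1$: the Jorgensen–Pedersen sum for $\mu_1$ only controls the $\lambda_2=0$ slice, giving $\sum_{\lambda_1}|\widehat{\mu_1}(\xi+\lambda_1)|^2\le1$ with no reason for equality.
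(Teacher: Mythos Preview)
Your overall strategy is correct and close to the paper's, but you and the paper diverge at two points.

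\medskip

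\textbf{The reverse inclusion $\Lambda_1\times\Lambda_2\subseteq\tilde\Lambda$.} You establish this directly by the padding/synchronisation argument, which is perfectly fine. The paper takes a different route: it only proves $\tilde\Lambda\subseteq\Lambda_1\times\Lambda_2$ explicitly, then shows that $\Lambda_1\times\Lambda_2$ is an \emph{orthogonal} set for $\mu_{\mathbf R,\mathbf B}$ (by proving that $\Lambda_2$ is orthogonal for \emph{every} fibre measure $\mu_x^2$, and that $\Lambda_1$ is orthogonal for $\mu_1$, then combining via Lemma~\ref{lem1.23}), and finally uses that a spectrum is a \emph{maximal} orthogonal set to force $\tilde\Lambda=\Lambda_1\times\Lambda_2$. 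Your direct argument is a bit cleaner; the paper's has the side benefit of showing that $\Lambda_2$ is orthogonal for every $\mu_\omega$, not just almost every.

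\medskip

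\textbf{The step ``$\Lambda_1$ is a spectrum for $\mu_1$''.} You are right that this is needed to invoke Theorem~\ref{pr1.3}, and that a marginal/JP argument does not give it for free. The paper simply asserts it ``by Theorem~\ref{th_dynamic}'', i.e.\ it takes for granted that $(R_1,B_1,L_1)$ is dynamically simple; in the actual applications (the proof of Theorem~\ref{th1.4}) this is automatic because $(R_1,B_1,L_1)$ is on $\mathbb R^1$. Your instinct to deduce it from dynamical simplicity of $(\mathbf R,\mathbf B,\mathbf L)$ is correct, but the route through Theorem~\ref{thccr} is unnecessarily heavy. A direct lift works: given a minimal compact invariant set $\mathcal M_1$ for $(R_1,B_1,L_1)$, the set $\mathcal M_1\times\{0\}$ is minimal compact invariant for $(\mathbf R,\mathbf B,\mathbf L)$. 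The point is that for $\ell_2\in L\setminus\{0\}$ the Hadamard condition on each $(R,B(i),L)$ (together with $0\in L$) forces $m_{B(i)}((R^T)^{-1}\ell_2)=0$ for every $i$, so $m_{\mathbf B}\bigl(\tau_{(\ell_1,\ell_2)}(x,0)\bigr)=0$ and no transition leaves the slice $\{y=0\}$; while for $\ell_2=0$ one has $m_{\mathbf B}\bigl(\tau_{(\ell_1,0)}(x,0)\bigr)=m_{B_1}(\tau_{\ell_1}(x))$, so the product transitions on $\mathcal M_1\times\{0\}$ match exactly the factor transitions on $\mathcal M_1$. Since $|m_{\mathbf B}(x,0)|=|m_{B_1}(x)|$, the lift $\mathcal M_1\times\{0\}$ is an extreme cycle for the product iff $\mathcal M_1$ is an extreme cycle for the factor. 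Hence dynamical simplicity of $(\mathbf R,\mathbf B,\mathbf L)$ passes to $(R_1,B_1,L_1)$, and Theorem~\ref{th_dynamic} then gives that $\Lambda_1$ is a spectrum for $\mu_1$, closing your gap.
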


\begin{proof}
As $({\bf R},{\bf B},{\bf L})$ is dynamically simple, we can define $\Lambda$ to be the dynamically simple spectrum for the quasi-product form $({\bf R},{\bf B},{\bf L})$.  We need to show that $\Lambda = \Lambda_1 \times \Lambda_2$. In the proof, it is worth to note that $|m_{B}(x)|=1$ if and only if $\ip{b}{x}\in{\mathbb Z}$ for all $b\in B$, since $0\in B$.

 \medskip


%

We show first that $\Lambda \subseteq \Lambda_1 \times \Lambda_2$. Property (ii) in Definition \ref{definv2} shows that the sets $\Lambda_1$ and $\Lambda_2$ satisfy $R_1^T \Lambda_1 +L_1 \subseteq \Lambda_1 $ and $R^T \Lambda_2 +L \subseteq \Lambda_2 $. With ${\bf R} = \left[
                               \begin{array}{cc}
                                 R_1 &0 \\
                                 0 & R \\
                               \end{array}
                             \right]
$, it is clear that
\beq
{\bf R}^T (\Lambda_1 \times \Lambda_2) + (L_1 \times L_2 ) \subseteq \Lambda_1 \times \Lambda_2.
\eeq
Thus we only have to show that $\Lambda_1 \times \Lambda_2$ contains $-C$ for all extreme cycles $C$ for $(\bf R,\bf B,\bf L)$ and then it follows from definition of $\Lambda$ that $\Lambda \subseteq \Lambda_1 \times \Lambda_2$.

\medskip

Let ${\mathcal C}=\{x_0 , x_1 , ..., x_{p-1} \}$ be such an extreme cycle of  $({\bf R},{\bf B},{\bf L})$. Then there exists ${\bf \ell} = (\ell_1, \ell_2) \in L_1 \times L_2 $ such that $x_{k+1} =( {\bf R}^T)^{-1} \left( x_k + (\ell_1 , \ell_2 )^T \right)$ for all $k=0,\dots,p-1$ and $x_p=x_0$. Writing $x_k= (x_k^{(1)},x_k^{(2)})$, we must have $x_{k+1}^{(1)} =( R_1^T)^{-1} \left( x_k^{(1)} + l_1  \right)$ and $x_{k+1}^{(2)} =( R^T)^{-1} \left( x_k^{(2)} +  l_2  \right)$ for all $k=0,\dots,p-1$. Thus the first components form a cycle for $(R_1, B_1, L_1)$ and the second components form a cycle for $(R_2^T, B(i), L)$ for all $i$. From the property of extreme cycle, we have that $\ip{b} {x_k} \in \bz$ for all $b \in {\bf B}$ and $k=0,1,...,p-1$. Therefore,
$$
\ip{a_i }{x_k^{(1)}} + \ip{d_{i,j}}{x_k^{(2)}} \in \bz
$$
for all $1 \leq i \leq N$, $1 \leq j \leq M$. Since  $d_{i,1} =0$, we must have $\ip{a_i }{ x_0^{(1)}} \in \bz$ for $1\leq i \leq N$ and therefore $\ip{d_{i,j}}{x_0^{(2)}} \in \bz$ for $1 \leq j \leq M$.
This shows that ${\mathcal C}_1 = \{ x_0^{(1)}, x_1^{(1)}, ... ,x_{p-1}^{(1)} \}$ is an extreme cycle for $(R_1 , B_1, L_1 )$, and  $C_2 = \{ x_0^{(2)}, x_1^{(2)}, ... ,x_{p-1}^{(2)} \}$ is an extreme cycle for all $(R_2^T, B(i), L_2)$.
Hence, $-C_1 \subseteq \Lambda_1$, $-C_2 \subseteq \Lambda_2$, and $-C \subseteq (-C_1) \times (-C_2) \subseteq \Lambda_1 \times \Lambda_2 $.
Since $\Lambda$ is the smallest set which is invariant under $R^T \Lambda + L$ and which contains $-C$ for all extreme cycles $C$, we must have $\Lambda \subseteq \Lambda_1 \times \Lambda_2 $.

\medskip

Next, we show that $\Lambda_1 \times \Lambda_2=\Lambda$. It suffices to show that $\Lambda_1\times\Lambda_2$ forms an orthogonal set for $\mu_{{\bf R},{\bf B}}$. Indeed, $\Lambda$ is a spectrum for $\mu_{{\bf R},{\bf B}}$ by Theorem \ref{th_dynamic}. This means that $\Lambda$ is a maximal orthogonal set (i.e. if $\lambda'\not\in\Lambda$, the exponential $e^{2\pi i \langle\lambda',x\rangle}$ cannot be orthogonal to all exponentials with frequencies in $\Lambda$). But $\Lambda\subset\Lambda_1\times\Lambda_2$ and $\Lambda_1\times\Lambda_2$ is a mutually orthogonal set, we must have $\Lambda = \Lambda_1\times\Lambda_2$.

\medskip

To show that $\Lambda_1\times\Lambda_2$ forms an orthogonal set for $\mu_{{\bf R},{\bf B}}$. We first note that as $\Lambda_1$ is a spectrum for the measure $\mu_1 = \mu(R_1,B_1)$ by Theorem \ref{th_dynamic}, $\Lambda_1$ is a mutually orthogonal set for   $\mu_1$. Hence,
\beq
\hat{\mu}_1 (\lambda_1 - \lambda_1' )  =0, \forall \lambda_1\neq\lambda_1'\in\Lambda_1.
\eeq
We now show that $\Lambda_2$ is a mutually orthogonal set for all $\mu_x^2$. Indeed, for all $x = x(i_1 , i_2 , ...)$,
\beq\label{three}
\hat{\mu}_x^{(2)}  (\lambda_2 - \lambda_2' ) = \prod_{k=1}^{\infty} m_{B_2 (i_k)} ( (R^T)^{-k} (\lambda_2 - \lambda_2') )
\eeq
where  $\lambda_2 \neq \lambda_2' \in \Lambda_2$. They can be written as
\beq\label{lambda_2}
\lambda_2 = \ell_0 + R^T \ell_1 + ... + (R^T)^{m-1} \ell_{m-1} + (R^T)^m (-x_0) ,
\eeq
\beq\label{lambda_2'}
\lambda_2' = \ell_0' + R^T \ell_1' + ... + (R^T)^{m'-1} \ell_{m'-1}' + (R^T)^{m'} (-x_0') ,
\eeq
with $\ell_i , \ell_i' \in L$, $x_0 , x_0'$ extreme cycle points for $(R, B(i), L)$. From (\ref{eqremark1}), for any $p\geq 1$, we can write
\beq \label{four}
-x_0 = (R^T)^k (-x_k) + \alpha_p + R^T \alpha_{p-1} + ... + (R^T)^{k-1} \alpha_{p-k} .
\eeq
Using (\ref{four}) in (\ref{lambda_2}), we can write $\lambda_2$ with as many digits as we want. Similarly, we can do this for case of $\lambda_2'$ in (\ref{lambda_2'}) and therefore we can take $m=m'$, and, as $\lambda_2,\lambda_2'$ are distinct elements,  we can assume that there exists $n< m$ such that $\ell_0 = \ell_0' , ... , \ell_{n-1} = \ell_{n-1}' , \ell_n \neq \ell_n'$. Then
$$
m_{B_2 (i_{n+1})} (  (R^T)^{-n-1} (\lambda_2 - \lambda_2' ) ) = m_{B_2 (i_n)} \left(  (R_2^T)^{-1} (\ell_n - \ell_n' ) + M_0+(R^T)^{m-n-1}(x-x')\right),
$$
where $M_0\in {\mathbb Z}^d$ and $x,x'$ are extreme cycle points. From integral periodicity of $m_{B_2 (i_n)} $ and (\ref{eqremark2}), the above quantity is equal to $ m_{B_2 (i_n)} (  (R_2^T)^{-1} (\ell_n - \ell_n' )) =0$ by the Hadamard triple assumption. This implies from (\ref{three}) that $\hat{\mu}_x^{(2)}  (\lambda_2 - \lambda_2' )=0$.

  \medskip

 If now $(\lambda_1 , \lambda_2) \neq (\lambda_1' , \lambda_2') \in \Lambda_1 \times \Lambda_2 $, we have, by Lemma \ref{lem1.23},
$$
\begin{aligned}
\langle e^{2\pi i\ip{( \lambda_1 , \lambda_2 )}{(x,y)} } , e^{2\pi i \ip{ ( \lambda_1' , \lambda_2' )}{(x,y)} } \rangle_{ L^2 (\mu_{{\bf R},{\bf B}} ) } =&  \int e^{2\pi i  \ip{(\lambda_1 - \lambda_1' , \lambda_2 - \lambda_2' ) }{ (x,y)}} d \mu_B (x,y) \\
=& \int \int  e^{ 2 \pi i \left( (\lambda_1 - \lambda_1')x + (\lambda_2 - \lambda_2' )y \right) } d \mu_x^{(2)} (y) d \mu_1 (x)  \\
= &\int e^{2 \pi i \ip{\lambda_1 - \lambda_1'}{x} } \widehat{\mu_x^{(2)}} (\lambda_2 - \lambda_2' ) d \mu_1 (x).
\end{aligned}
$$
As $\Lambda_2$ is a mutually orthogonal set for $\mu_x^{(2)}$, the term above is equal to $0$ if $\lambda_2 \neq \lambda_2'$. And if $\lambda_2 = \lambda_2'$, we must have $\lambda_1\neq\lambda_1'$ and hence $\widehat{\mu_1} (\lambda_1 - \lambda_1' ) =0$. Thus $\Lambda_1 \times \Lambda_2$ forms an orthogonal set and hence completes the proof that $\Lambda_1 \times \Lambda_2$ is a dynamically simple spectrum for $({\bf R},{\bf B},{\bf L})$.

\medskip

Finally, by Theorem \ref{th_dynamic}, $\Lambda_1\times\Lambda_2$ is a spectrum for the self-affine measure $\mu_{{\bf R},{\bf B}}$. Therefore, it follows from Theorem \ref{pr1.3} that $\Lambda_2$ is ${\mathbb P}$-almost surely a spectral measure for $\mu_{\omega}$.
\end{proof}

We now present the proof of Theorem \ref{th1.4}.



\begin{proof}[Proof of Theorem \ref{th1.4} (when (i) holds, i.e.,  the Hadamard triples $(R,B(i),L)$ are on ${\mathbb R}^1$)]
We pick a number $p\in\bn$ such that $pN\neq R$. Define the matrix
$${\bf R}=\begin{bmatrix}
	pN&0\\
	0&R
\end{bmatrix}.$$
Let $ \tilde B(i)=B(i (\mod \ N))$ for all $i\in\{0,1,...,pN-1\}$. Here, $i(\mod \ N)$ is the remainder when $i$ is divided by $N$. Let
$${\bf \tilde B}:=\left\{(i ,  d)^T : i\in\{0,1,\dots,pN-1\}, d\in \tilde B(i)\right\}.$$
Let ${\bf \tilde L}:=\{0,\dots,pN-1\}\times L$. In each of the coordinates, they form Hadamard triples on ${\mathbb R}^1$ and hence they must be dynamically simple by Theorem \ref{th_dynamic}.  We now show that $({\bf R},{\bf \tilde B},{\bf \tilde L})$ is also dynamically simple, so that Theorem \ref{th1.5} is applicable.

\medskip

Let ${\mathcal M}$ be a minimal compact invariant set. Assume that ${\mathcal M}$ is infinite and ${\mathcal M}$ is not an extreme cycle. Then , by Theorem \ref{thccr}, there is a subspace $V\neq \br^2$, invariant for $ {\bf R}$, such that $\M$ is contained in a union of finitely many translates of $V$. Since $V$ is invariant for ${\bf R}$, $pN\neq R$ and $V\neq\{0\}$ ($V=\{0\}$ corresponds to the extreme cycles), the only options are $V=\br\times\{0\}$ or $V=\{0\}\times \br$. We show that the first case is impossible while the second case implies all $\tilde B_j$ are the same, which means that Theorem \ref{th1.4} holds trivially.

\medskip

\noindent{\bf Case (i) $V=\br\times\{0\}$.} A direct check shows that the hypothesis ``(H) modulo $V$'' in Theorem \ref{thccr}(iii) is satisfied with ${\bf\tilde L}$ (See for example \cite[Proposition 3.7]{DJ07d} for an analogous proof). Applying now Theorem \ref{thccr}(iii), we deduce the existence of an ${\bf \tilde L}$-cycle  $(x_0,y_0)$, with digits $(i_1,\ell_1),\dots,(i_m,\ell_m)$ ($\ell_j\in L$, $i_j\in\{0,1,...,pN-1\}$) such that
$$\M\subset\bigcup_{k=1}^m\left(\tau_{(i_k,\ell_k)}\dots \tau_{(i_m,\ell_m)}(x_0,y_0)+V\right)=:\mathcal R$$
and $\mathcal R$ is invariant. Moreover, every possible transition from $\tau_{(i_k,\ell_k)\dots (i_m,\ell_m)}(x_0,y_0)+V$ leads to a point in $\tau_{(i_{k-1},\ell_{k-1})\dots(i_m,\ell_m)}(x_0,y_0)+V$ for $1\leq k\leq m$, where $(i_0,\ell_0):=(i_m,\ell_m)$.

\medskip

Let $(x,y_0)\in (x_0,y_0)+V$. Let $\ell\neq \ell_m$. Then $\tau_{(i',\ell)}(x,y_0)\not \in \tau_{(i_m,\ell_m)}(x_0,y_0)+V$, thus the transition is not possible so $m_{\bf\tilde B}(\tau_{(i',\ell)}(x,y_0))=0$ which means
$$\sum_{k=0}^{pN-1}\sum_{d\in \tilde B(k)}e^{2\pi i (k\frac{x+i'}{pN}+d\frac{y_0+\ell}R)}=0.$$
Let $x':=\frac{x+i'}{pN}$. Then
$$0=\sum_{k=0}^{pN-1}e^{2\pi i kx'}\sum_{d\in\tilde B_2(k)}e^{2\pi i d\frac{y_0+l}{R}}.$$
Since $x'$ can be any real number, the coefficients of this polynomial must be zero:
$$\sum_{d\in \tilde B(k)}e^{2\pi i  d\frac{y_0+\ell}{R}}=0,\quad \mbox{ for all }\ell\neq \ell_m$$
by the linear independence of the trigonometric polynomials $e^{2\pi i kx'}$, $k=0,1,...,pN-1$. This means that $m_{\tilde B(k)}(\tau_{\ell}(y_0))=0$ for all $\ell\neq \ell_m$ and hence $|m_{\tilde B(k)}(\tau_{\ell_m}(y_0))|=1$. Since $0\in \tilde B(k)$ we have equality in the triangle inequality, so $m_{\tilde B(k)}(\tau_{\ell_m}(y_0))=1$. We can do this for all the points in the cycle $C_2:=\{y_k:=\tau_{\ell_k}\dots \tau_{\ell_m}(y_0) : 1\leq k\leq m\}$, and we conclude that $C_2$ is an extreme $L$-cycle  for all $\tilde B(k)$.

\medskip

We now consider $v:=(x,y_k)\in \M$ and the possible transition  from $v$, which must be of the form $(i',\ell_{k-1})$. From the extreme cycle property, we have $m_{\tilde B(k)}(y_{k-1})=1$ and hence
$$0\neq m_{\bf\tilde B}(\tau_{(i',\ell_{k-1})}(x,y_k))=\frac{1}{pN}\sum_{j=0}^{pN-1}e^{2\pi i j\frac{x+i'}{pN}}m_{\tilde B(k)}(y_{k-1})=\sum_{j=0}^{pN-1}e^{2\pi i j\frac{x+i'}{pN}}.$$
This holds if and only if $x+i'\not\in\bz$ or $x+i'$ is a multiple of $pN$. As $\M$ is infinite and $\M\subset T({\bf R},{\bf\tilde L}) =[0,1]\times T(R,L)$, we can assume that $x\not\in \bz$. In this case, when $i'=0$,
$$m_{\bf\tilde B}(\tau_{(0,\ell_{k-1})}(x,y_{k}))=\sum_{j=0}^{pN-1}e^{2\pi i j\frac{x}{pN}}\neq 0.$$
Hence the transition is possible and we conclude that $({x}/{pN},y_{k-1})$ is in $\M$. Iterate this step by replacing $(x,y_{k})$ with $({x}/{pN},y_{k-1})$.  Taking the limit and using the compactness of $\M$, we obtain that $\M$ contains $(0,y_k)$ for all $k$. But that means that $\M$ contains an  ${\bf \tilde L}$-cycle which is extreme for ${\bf \tilde B}$, and by minimality, it has to be equal to the extreme cycle. That is a contradiction. Thus, $V$ cannot be $\br\times \{0\}$.

\medskip

\noindent{\bf Case (ii) $V=\{0\}\times\br$.} As before, ``(H) modulo $V$'' in Theorem \ref{thccr}(iii) is satisfied and Theorem \ref{thccr} implies that there exists $(x_0,y_0)$, and an $\tilde L$-cycle, with digits $(i_1,\ell_1),\dots,(i_m,\ell_m)$ such that
$$M\subset\bigcup_{k=1}^m\left(\tau_{(i_k,\ell_k)}\dots \tau_{(i_m,\ell_m)}(x_0,y_0)+V\right)=:\mathcal R,$$
$\mathcal R$ is invariant and every possible transition from $\tau_{(i_k,\ell_k)}\dots \tau_{(i_m,\ell_m)}(x_0,y_0)+V$ leads to a point in $\tau_{(i_{k-1},\ell_{k-1})}\dots \tau_{(i_m,\ell_m)}(x_0,y_0)+V$ for $1\leq k\leq m$, where $(i_0,\ell_0):=(i_m,\ell_m)$.

\medskip

Take $i'\neq i_m$ in $\{0,\dots,pN-1\}$ and $y\in{\mathbb R}$. The transition  from $(x_0,y)$ to $ \tau_{(i',\ell)}(x_0,y)$ is not possible and thus $m_B(\tau_{(i,\ell)}(x_0,y))=0$.  Then with $y'=(y+\ell)/R$,
$$
0=\sum_{j=0}^{pN-1}\sum_{d\in \tilde B(j)}e^{2\pi i j\frac{x_0+i'}{pN}}e^{2\pi i d\frac{y+\ell}{R}} =\sum_{d\in\cup_j \tilde B(j)}e^{2\pi i dy'}\sum_{\{j: d\in \tilde B(j)\}}e^{2\pi ij\frac{x_0+i}{pN}}.$$
Then, all the coefficients are zero so, for all $d\in \cup_{j=1}^{N} \tilde B(j)$, and all $i'\neq i_m$,
$$
\sum_{j : d\in \tilde B(j)}e^{2\pi i j\frac{x_0+i}{pN}}=0.
$$
But $0\in \tilde B(j)$ for all $j$ so
$$\sum_{j=0}^{pN-1}e^{2\pi i j\frac{x_0+i'}{pN}}=0$$
for all $i'\neq i_m$. As the same time, this implies that $\sum_{j=0}^{pN-1}e^{2\pi i j\frac{x_0+i_m}{pN}}=1$ and hence $x_0\equiv (-i_m)(\mod pN)$. Since $x_0\in[0,1]$ (by  $\M\subset T({\bf R},{\bf\tilde L}) =[0,1]\times T(R,L)$) we obtain that $x_0=0$ or $x_0=1$.

\medskip

If $x_0=0$, then the digits  corresponding to this cycle are $m=1$ and $i_1=0$. Then we have
$$\sum_{j : d\in \tilde B_2(j)}e^{2\pi i j\frac{0+i'}{pN}}=0$$
for all $i'\neq i_1=0$ and all $d\in \cup_{j=1}^{N}\tilde B(j)$. Let $A_d(x):=\sum_{j: d\in \tilde B_2(j)}x^j$. Then $A_d(e^{2\pi i \frac{i'}{pN}})=0$ for all $i'\in\{1,\dots, pN-1\}$. Therefore $A_d$ is divisible by $1+x+\dots +x^{pN-1}$ and this implies that $A_d(x)=1+x+\dots +x^{pN-1}$. So every $d\in \cup_{j=1}^{N} \tilde B(j)$ appears in all $\tilde B(j)$. But this means that all the sets $\tilde B(j)$ are equal  and so all $\mu_\omega=\mu_{R,\tilde B(0)}$ which is the self-affine spectral measure. The conclusion holds trivially. Similarly, the case $x_0=1$ follows from the same argument , the cycle has  digits $m=1$ and $i_1=pN-1$.

\medskip

Now, we can see that the only minimal compact invariant sets are extreme cycles. By Theorem \ref{th1.5}, with $\Lambda_2$ as defined in its hypothesis, we have that $\mu_x^2$ has spectrum $\Lambda_2$ for $\mu_1$-a.e. $x$. Note that $\mu_1$ is the Lebesgue measure. Then $\mu_\omega$ has spectrum $\Lambda_2$ for $\tilde {\mathbb P}$-a.e. $\omega\in\{0,\dots,pN-1\}^{\bn}$, where $\tilde{\mathbb P}$ is the product probability measure on $\{0,1,\dots,pN-1\}^{\bn}$ that assigns equal probabilities $\frac{1}{pN}$ to every digit $0,1,\dots,pN-1$.
Consider now the map
$$\Phi:\{0,1,\dots, pN-1\}^{\bn}\rightarrow\{0,1,\dots,N-1\}^{\bn},\quad \Phi(i_1i_2\dots)=(i_1(\mod N), i_2(\mod N), \dots).$$
 By checking on cylinder sets, note that for any Borel subset of $\{0,1,\dots,N-1\}^{\bn}$,
$$\mathbb P(E)=\tilde {\mathbb P}(\Phi^{-1}(E)).$$
Also, note that for $\omega=i_1i_2\dots\in\{0,1,\dots,pN-1\}^{\bn}$, we have $\mu_{\omega}=\mu_{\Phi(\omega)}$, because $\tilde B_2(i)=B(i(\mod N))$. Then
$$\mathbb P(\omega :\mu_\omega\mbox{ has spectrum $\Lambda_2$})=\tilde{\mathbb P}(\omega :\mu_{\Phi(\omega)}\mbox{ has spectrum $\Lambda_2$})=\tilde{\mathbb P}(\omega : \mu_\omega\mbox{ has spectrum }\Lambda_2)=1.$$
This completes the proof.
\end{proof}

\medskip

\begin{proof}[Proof of Theorem \ref{th1.4} (when (ii) holds, i.e.,  each $B(i)$ is a complete set representative of $R$)] Consider
$${\bf R}=\begin{bmatrix}
	N&0\\
	0&R
\end{bmatrix}, \mbox{and}$$
$${\bf \tilde B}:=\left\{(i ,  d)^T : i\in\{0,1,\dots,N-1\}, d\in  B(i)\right\}.$$
 In this case, the attractor $T({\bf R},{\bf \tilde B})$ defined in (\ref{T(R,B)}) is a self-affine tile and it admits a lattice tiling of the form ${\bz}\times \tilde\Gamma$ for some lattice $\tilde\Gamma$ (See e.g. \cite{LW2} and \cite[Proposition 4.4, Claim]{DHL15}). Hence, it admits a spectrum of the form ${\bz}\times \Gamma$ with $\Gamma$ a dual lattice of $\tilde\Gamma$. Hence, Theorem \ref{pr1.3} shows that $\Gamma$ is almost surely a spectrum for $\mu_\omega$.

The final statement in Theorem \ref{th1.4} will be proved via the following general lemma. \end{proof}

\medskip

\begin{lemma}\label{lem3.11}
Let $\mu$ be a Borel, compactly supported probability measure on $\br^d$. Suppose $\mu$ is spectral and the spectrum is a full-rank lattice $\Gamma$. Then $\mu$ is the Lebesgue measure with support $T$ which tiles $\br^d$ by the dual lattice $\tilde\Gamma$.
\end{lemma}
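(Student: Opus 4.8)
The plan is to decode the single hypothesis ``$\Gamma$ is an orthonormal basis for $L^2(\mu)$'' into two structural facts about $\mu$ by periodizing over the dual lattice $\tilde\Gamma$: orthogonality will deliver the absolutely continuous structure, and completeness will force the resulting density to be two-valued. Throughout I write $\tilde\Gamma=\{x:\ip{x}{\gamma}\in\bz\ \forall\gamma\in\Gamma\}$ and use $\tilde{\tilde\Gamma}=\Gamma$, so that the characters of the compact group $\br^d/\tilde\Gamma$ are exactly $x\mapsto e^{2\pi i\ip{\gamma}{x}}$, $\gamma\in\Gamma$.

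First I would record the two halves of the basis hypothesis in Fourier form: orthonormality says $\widehat\mu(\gamma)=0$ for all $\gamma\in\Gamma\setminus\{0\}$ and $\widehat\mu(0)=1$, and completeness says any $f\in L^2(\mu)$ with $\int f\,\overline{e^{2\pi i\ip{\gamma}{x}}}\,d\mu=0$ for all $\gamma$ is zero. Consider the $\tilde\Gamma$-periodization $\mu_{per}:=\sum_{\tilde\gamma\in\tilde\Gamma}\mu(\cdot-\tilde\gamma)$, which is locally finite because $\mu$ is compactly supported, and view it as a measure on $\br^d/\tilde\Gamma$. Unfolding shows $\widehat{\mu_{per}}(\gamma)=\widehat\mu(\gamma)$, so every nontrivial Fourier coefficient of $\mu_{per}$ vanishes while $\widehat{\mu_{per}}(0)=1$; hence $\mu_{per}$ is the normalized Haar measure, i.e. $\mu_{per}=\tfrac{1}{\covol(\tilde\Gamma)}\,dx$. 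Since each nonnegative summand $\mu(\cdot-\tilde\gamma)$ is dominated by $\mu_{per}$, the term $\tilde\gamma=0$ gives that $\mu$ itself is absolutely continuous: $d\mu=h\,dx$ with $0\le h\le \tfrac{1}{\covol(\tilde\Gamma)}$, $h$ compactly supported, and the periodization identity becomes $\sum_{\tilde\gamma\in\tilde\Gamma}h(x-\tilde\gamma)=\tfrac{1}{\covol(\tilde\Gamma)}=:c$ for a.e. $x$ — a ``weighted tiling'' of $\br^d$ by $\tilde\Gamma$.

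Next I would use completeness to upgrade this weighted tiling to a genuine one. For $f\in L^2(h\,dx)$, the orthogonality relations $\int f\,\overline{e^{2\pi i\ip{\gamma}{x}}}\,d\mu=\widehat{fh}(\gamma)$ vanish for all $\gamma\in\Gamma$ precisely when the $\tilde\Gamma$-periodization of $fh$ vanishes, so completeness reads: if $\sum_{\tilde\gamma}(fh)(x-\tilde\gamma)=0$ a.e., then $fh=0$ a.e. I would prove the contrapositive of ``$h$ is two-valued.'' If, on a positive-measure set of $x$ in a fundamental domain $F$ of $\tilde\Gamma$, at least two translates $h(x-\tilde\gamma)$ are strictly positive, then on each such fiber choose (measurably, by enumerating $\tilde\Gamma$ and taking the first two positive indices $\tilde\gamma_1,\tilde\gamma_2$) the values $f(x-\tilde\gamma_1)=h(x-\tilde\gamma_2)$, $f(x-\tilde\gamma_2)=-h(x-\tilde\gamma_1)$, and $f=0$ elsewhere on the fiber. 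Then $\sum_{\tilde\gamma}(fh)(x-\tilde\gamma)=0$ while $fh\neq0$ on a positive-measure set; since $f$ is bounded by $c$ and compactly supported it lies in $L^2(\mu)$, so this is a nonzero element orthogonal to every exponential, contradicting completeness. Therefore, for a.e.\ $x$ exactly one translate is positive, and as the translates sum to $c$, that value equals $c$; hence $h$ takes only the values $0$ and $c$.

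Finally, setting $T:=\{h=c\}$ (up to a null set), the identity $\sum_{\tilde\gamma}h(x-\tilde\gamma)=c$ becomes $\sum_{\tilde\gamma}\mathbf 1_T(x-\tilde\gamma)=1$ a.e., so $T$ tiles $\br^d$ by $\tilde\Gamma$; consequently $|T|=\covol(\tilde\Gamma)=1/c$ and $\mu=c\,\mathbf 1_T\,dx=\tfrac{1}{|T|}\mathbf 1_T\,dx$ is the normalized Lebesgue measure on $T$, as claimed. I expect the main obstacle to be the completeness step: making the fiberwise cancellation construction genuinely measurable in $x$ and verifying that the resulting $f$ is a nonzero element of $L^2(\mu)$, together with the routine but essential justification that periodization converts vanishing of the lattice samples $\widehat{fh}(\gamma)$ into vanishing of the periodized function. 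The absolute-continuity deduction in the first step (domination of each translate by $\mu_{per}$) is the other point that must be handled with care.
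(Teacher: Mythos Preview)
Your argument is correct and genuinely different from the paper's. The paper changes variables to $\Gamma=\bz^d$, uses the Jorgensen--Pedersen identity $\sum_{n\in\bz^d}|\widehat\nu(\xi+n)|^2=1$ and integrates it over $[0,1)^d$ to get $\widehat\nu\in L^2(\br^d)$, hence $\nu\ll dx$; it then invokes an external result (\cite[Theorem~1.5]{DL14}) to conclude that an absolutely continuous spectral measure must have constant density on its support, and finally appeals to Fuglede's theorem in the lattice case to pass from ``spectral set with spectrum $\bz^d$'' to ``tiles by $\bz^d$''. Your route replaces both citations by direct arguments: the domination $\mu\le\mu_{per}=c\,dx$ gives absolute continuity together with the weighted tiling identity $\sum_{\tilde\gamma}h(\cdot-\tilde\gamma)=c$, and your fiberwise cancellation construction uses completeness to force $h\in\{0,c\}$, from which tiling by $\tilde\Gamma$ is immediate. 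The trade-off is that the paper's proof is shorter but leans on two nontrivial outside theorems, whereas yours is self-contained; your measurable-selection step (first two positive indices in an enumeration of $\tilde\Gamma$) and the observation that only finitely many translates of the compact support of $h$ meet any bounded set make the construction rigorous and keep $f$ bounded with compact support, so $f\in L^2(\mu)$ as needed.
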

\begin{proof}
Let $\Gamma=A\bz^d$ for some integer $d\times d$ non-singular matrix $A$. The dual lattice is $\tilde\Gamma=(A^T)^{-1}\bz^d$. We change the variable to reduce the problem to the case when $\Gamma=\bz^d$. Define the Borel probability measure $\nu$ by
$$\int f(x)\,d\nu(x)=\int f(A^Tx)\,d\mu(x),$$
for all continuous functions $f$ on $\br^d$.
Then $\nu$ has spectrum $\bz^d$. By Lemma \ref{JP}, $\sum_{n\in{\bz^d}}|\widehat{\nu}(\xi+n)|^2=1$. Thus,
$$
\int_{{\mathbb R}^d}|\widehat{\nu}(\xi)|^2d\xi = \int_{[0,1)^d}\sum_{n\in{\bz^d}}|\widehat{\nu}(\xi+n)|^2d\xi =1.
$$
 This shows that $\nu$ is absolutely continuous with respect to the Lebesgue measure. As $\nu$ is a spectral measure, we must have $\nu = \frac{1}{\mbox{Leb(S)}}\chi_Sdx$ for some measurable set $S$ (Theorem 1.5 in \cite{DL14}). $S$ is therefore a spectral set with spectrum $\bz^d$. By the well-known theorem of Fuglede \cite{Fug74}, $S$ is a translational tile by tiling set $\bz^d$. 
 This implies that $\mu$ is the normalized Lebesgue measure on the set $T:=(A^T)^{-1}S$, which tiles $\br^d$ by $(A^T)^{-1}\bz^d=\tilde\Gamma$.

See also \cite[Theorem 2.4]{DuJo13} for a variation of the proof. 
\end{proof}

\medskip

\section{Appendix: dynamically simple spectrum}

We will prove Theorem \ref{th_dynamic} in this section. We let $\Lambda$ be the orthonormal set generated by the extreme cycles for $(R,B,L)$ and $$
\Lambda'=\{\ell_0+R^T\ell_1+\dots (R^T)^{n-1}\ell_{n-1}+(R^T)^n(-c) : \ell_0,\dots,\ell_{n-1}\in L, n\geq0, \ c\mbox{ are extreme cycle points}\}
$$
the set given in Theorem \ref{th_dynamic}. We first prove from definition that they are the same.

\medskip

\begin{lemma}\label{lem4.1}
$\Lambda=\Lambda'$. In fact,
$$
\Lambda= R^T\Lambda+L.
$$
\end{lemma}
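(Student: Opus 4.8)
The plan is to check that the explicit set $\Lambda'$ obeys the two defining properties of $\Lambda$, to obtain the reverse inclusion by an induction that peels off leading digits, and then to deduce the self-affine identity $\Lambda = R^T\Lambda + L$ from the explicit description together with the closing-up of extreme cycles recorded in Remark~\ref{remark}.

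First I would prove $\Lambda \subseteq \Lambda'$. Setting $n=0$ in the description of $\Lambda'$ shows it contains $-c$ for every extreme cycle point $c$, so condition (i) of Definition~\ref{definv2} holds. For condition (ii), given a typical element $\lambda' = \ell_0 + R^T\ell_1 + \dots + (R^T)^{n-1}\ell_{n-1} + (R^T)^n(-c)$ of $\Lambda'$ and any $\ell\in L$, a one-line computation gives $R^T\lambda' + \ell = \ell + R^T\ell_0 + \dots + (R^T)^n\ell_{n-1} + (R^T)^{n+1}(-c)$, which is again of the required form with one extra leading digit; hence $R^T\Lambda' + L \subseteq \Lambda'$. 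Since $\Lambda$ is by definition the smallest set satisfying (i) and (ii), this yields $\Lambda \subseteq \Lambda'$.

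Next I would show $\Lambda' \subseteq \Lambda$ by induction on the number $n$ of leading digits. The case $n=0$ is condition (i). For the inductive step I write $\lambda' = \ell_0 + R^T\mu$, where $\mu = \ell_1 + R^T\ell_2 + \dots + (R^T)^{n-2}\ell_{n-1} + (R^T)^{n-1}(-c)$ is an element of $\Lambda'$ with only $n-1$ leading digits; by the induction hypothesis $\mu \in \Lambda$, so $\lambda' = R^T\mu + \ell_0 \in R^T\Lambda + L \subseteq \Lambda$ by (ii). Combining the two inclusions gives $\Lambda = \Lambda'$.

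It remains to prove $\Lambda = R^T\Lambda + L$. The inclusion $R^T\Lambda + L \subseteq \Lambda$ is precisely condition (ii), so only $\Lambda \subseteq R^T\Lambda + L$ is at issue, and using $\Lambda = \Lambda'$ it suffices to treat elements of $\Lambda'$. Any such element with $n \geq 1$ leading digits is visibly $R^T\mu + \ell_0$ with $\mu \in \Lambda$, exactly as in the inductive step above. The one genuinely structural case is $n = 0$, i.e. the generators $-c$ with $c$ an extreme cycle point, and this is where I expect the only real work: a priori $-c$ need not decompose as $R^T(\cdot) + \ell$. Here I would invoke the cycle structure through \eqref{eqremark1} with $n=1$: every extreme cycle point arises as the image $\tau_{\ell_0}(c')$ of another cycle point $c'$, which rearranges to $-c' = R^T(-c) + \ell_0$ with $\ell_0 \in L$. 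Since $-c \in \Lambda$ and the predecessor map $c \mapsto c'$ is a bijection of the finite cycle, letting $c$ run over all cycle points shows $-c' \in R^T\Lambda + L$ for every cycle point $c'$. This closes the last case and gives $\Lambda \subseteq R^T\Lambda + L$, completing the proof. The main obstacle is thus isolated entirely in the $n=0$ case, and it is resolved by the finiteness and cyclicity of extreme cycles rather than by any estimate.
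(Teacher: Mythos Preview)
Your proof is correct and follows essentially the same approach as the paper: both verify that $\Lambda'$ satisfies the two defining properties of $\Lambda$ to get $\Lambda\subseteq\Lambda'$, then use induction on the number of digits for the reverse inclusion, and finally use the cycle relation $-c = \ell + R^T(-c')$ (with $c' = \tau_\ell(c)$) to handle the generators in the identity $\Lambda = R^T\Lambda + L$. The only minor difference is that where the paper simply asserts the last identity is ``clear from the definition of $\Lambda'$'', you spell out the $n=0$ case via a predecessor-plus-bijection argument; note that this step can be done more directly by taking the \emph{successor} $c'' = \tau_\ell(c)$, which immediately gives $-c = R^T(-c'') + \ell \in R^T\Lambda + L$ without invoking bijectivity.
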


\begin{proof}
It is clear that $R^{T}\Lambda'+L \subset \Lambda'$. Also, for any extreme cycle points $c$, there exists unique $\ell$ such that  $c' = \tau_{\ell}(c)$ is an extreme cycle point. Hence, $-c= \ell+R^T(-c')$, which implies $\Lambda'$ contains all extreme cycles. By definition, $\Lambda\subset \Lambda'$. On the other hand, since $-c\in \Lambda$, the invariance implies that $\ell_m+R^{T}(-c)\in \Lambda$ for all $\ell_n\in L$. Inductively, $(R^{T})^2(-c)+R^T\ell_n+\ell_{n-1}\in \Lambda$, and in the end, $$
\ell_0+R^T\ell_1+\dots (R^T)^{n-1}\ell_{n-1}+(R^T)^n(-c)\in \Lambda,$$
for all $n$. Thus $\Lambda'\subset \Lambda$. This shows $\Lambda = \Lambda'$. From the definition of $\Lambda'$, it is clear that $\Lambda = R^T\Lambda+L$.
\end{proof}

\medskip

From now on, we will work on the expression $\Lambda'$, and for simplicity, we still write it as $\Lambda$. We first show the mutually orthogonality of $\Lambda$ in $\mu_{R,B}$.

\begin{proposition}\label{pr4.1}
$\Lambda$ is a mutually orthogonal set in $\mu_{R,B}$.
\end{proposition}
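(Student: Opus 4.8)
The plan is to show directly that $\widehat{\mu_{R,B}}(\lambda-\lambda')=0$ for all distinct $\lambda,\lambda'\in\Lambda$, which is precisely the asserted mutual orthogonality. For each such pair I would exhibit a single index $k$ at which the corresponding factor in the product expansion
$$\widehat{\mu_{R,B}}(\xi)=\prod_{k=1}^\infty m_B\big((R^T)^{-k}\xi\big)$$
vanishes. This is the single-measure analogue of the orthogonality computation already carried out inside the proof of Theorem \ref{th1.5}, equations (\ref{lambda_2})--(\ref{four}), and I would follow the same route.

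By Lemma \ref{lem4.1} I may write $\lambda=\ell_0+R^T\ell_1+\dots+(R^T)^{n-1}\ell_{n-1}+(R^T)^n(-c)$ and likewise $\lambda'$ with digits $\ell_j'\in L$ and extreme cycle point $c'$. Using the unfolding relation (\ref{eqremark1}) to append cycle digits at the high-order end, I would arrange that both representations share a common length $m$, and then take $n<m$ to be the first index at which $\ell_n\neq\ell_n'$. Justifying that such an $n$ exists is the one genuinely delicate point: since digit strings are only appended (never altered) as $m$ grows, if they agreed at every length then $\lambda-\lambda'=(R^T)^m(c'-c)$ for arbitrarily large $m$, and because $R^T$ is expansive the right-hand norm blows up unless the tail cycle points coincide; but coinciding tails together with coinciding digit strings would force $\lambda=\lambda'$. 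Here one uses that an extreme cycle point is determined by its periodic digit string, via invertibility of $I-(R^T)^p$ (valid since $R^T$ has all eigenvalues of modulus $>1$). Thus distinctness guarantees a true disagreement among the $L$-digits.

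With $\ell_j=\ell_j'$ for $j<n$, the $(n+1)$-st factor reads
$$m_B\big((R^T)^{-(n+1)}(\lambda-\lambda')\big)=m_B\Big((R^T)^{-1}(\ell_n-\ell_n')+M_0+(R^T)^{m-n-1}(c'-c)\Big),$$
where $M_0=\sum_{j=n+1}^{m-1}(R^T)^{j-n-1}(\ell_j-\ell_j')\in\mathbb Z^d$. Pairing with any $b\in B$, the term $M_0$ contributes an integer, while $\langle b,(R^T)^{m-n-1}(c'-c)\rangle=\langle R^{m-n-1}b,c'-c\rangle\in\mathbb Z$ by (\ref{eqremark2}); since $m_B$ is $\mathbb Z^d$-periodic (because $B\subset\mathbb Z^d$), both terms may be dropped, leaving
$$m_B\big((R^T)^{-1}(\ell_n-\ell_n')\big)=\frac{1}{\#B}\sum_{b\in B}e^{2\pi i\langle R^{-1}b,\,\ell_n-\ell_n'\rangle}=0,$$
the last equality expressing the orthogonality of two distinct rows of the Hadamard matrix $H$ (from $HH^*=I$). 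This produces a vanishing factor, hence $\widehat{\mu_{R,B}}(\lambda-\lambda')=0$.

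The periodicity-plus-Hadamard cancellation in the final step is routine and mirrors (\ref{three})--(\ref{four}); the step I expect to be the main obstacle is the normal-form reduction of the second paragraph, where the non-uniqueness of $(R^T,L)$-digit representations must be controlled and the expansiveness of $R^T$ is essential to rule out the spurious case $\lambda-\lambda'=(R^T)^m(c'-c)$, in which no factor would obviously vanish.
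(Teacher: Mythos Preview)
Your proposal is correct and follows essentially the same approach as the paper's own proof: both extend the cycle-point representations via (\ref{eqremark1}) to a common length, locate the first disagreeing $L$-digit, and then use $\bz^d$-periodicity of $m_B$ together with (\ref{eqremark2}) and the Hadamard condition to kill the $(n+1)$-st factor. In fact you supply more detail than the paper does at the one point it glosses over---the existence of a first disagreement index---by invoking expansiveness of $R^T$ and the finiteness of cycle points to rule out the case $\lambda-\lambda'=(R^T)^m(c_m'-c_m)$; the paper simply asserts this step.
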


\begin{proof}
We need to see whether
\beq\label{three1}
\hat{\mu} (\lambda - \lambda' ) = \prod_{k=1}^{\infty} m_{B} ( (R^T)^{-k} (\lambda - \lambda') )
\eeq
is zero whenever  $\lambda \neq \lambda' \in \Lambda$. Now, they can be written as
\beq\label{lambda_21}
\lambda = \ell_0 + R^T \ell_1 + ... + (R^T)^{m-1} \ell_{m-1} + (R^T)^m (-c) ,
\eeq
\beq\label{lambda_21'}
\lambda' = \ell_0' + R^T \ell_1' + ... + (R^T)^{m'-1} \ell_{m'-1}' + (R^T)^{m'} (-c') ,
\eeq
with $\ell_i , \ell_i' \in L$, $c , c'$ extreme cycle points for $(R, B, L)$ (they may be from different cycles). From (\ref{eqremark1}), for any $p\geq 1$, we can write
\beq \label{four1}
-c = (R^T)^k (-c_k) + \alpha_p + R^T \alpha_{p-1} + ... + (R^T)^{k-1} \alpha_{p-k}
\eeq
for some digits $\alpha_i$ in $L$ and another extreme cycle point $c_k$.
 Using  (\ref{four1}) in (\ref{lambda_21}), we can write $\lambda$ with as many digits as we want. Similarly, we can do it for case of $\lambda'$ in (\ref{lambda_21'}). As $\lambda,\lambda'$ are distinct elements,  we can assume for some $m=m'$ that there exists $n< m$ such that $\ell_0 = \ell_0' , ... , \ell_{n-1} = \ell_{n-1}' , \ell_n \neq \ell_n'$.
$$
m_{B} (  (R^T)^{-n-1} (\lambda - \lambda' ) ) = m_{B} \left(  (R_2^T)^{-1} (\ell_n - \ell_n' ) + M_0+(R^T)^{m-n-1}(x-x')\right),
$$
where $M_0$ is some integer vector in ${\mathbb Z}^d$ and $x,x'$ are extreme cycle points. From the integral periodicity of $m_{B} $ and (\ref{eqremark2}), $\ip{b}{M}\in\bz$ and $\ip{b}{(R^T)^{m-n-1}(x-x')}\in\bz$,  The term above is equal to $ m_{B} (  (R_2^T)^{-1} (\ell_n - \ell_n' )) =0$ by the Hadamard triple assumption. This implies from (\ref{three1}) that $\hat{\mu}  (\lambda - \lambda' )=0$.
\end{proof}

\medskip

We need an easy geometric lemma.

\begin{lemma}\label{lem4.2}
Suppose that $(R,B,L)$ is a Hadamard triple and we define $\tau_{\ell}(x) = (R^T)^{-1}(x+\ell)$. Let ${\mathcal B}_r$ be the closed Euclidean ball centered at origin. Then for $r$ sufficiently large,
$$
\bigcup_{\ell\in L}\tau_{\ell}({\mathcal B}_r)\subset {\mathcal B}_r.
$$
\end{lemma}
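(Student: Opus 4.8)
The plan is to use the only structural hypothesis that is actually relevant, namely that $R$ is expansive. Since $R$ and $R^T$ have the same eigenvalues, every eigenvalue of $A:=(R^T)^{-1}$ has modulus strictly less than $1$, so its spectral radius is $\rho<1$. The lemma is then nothing more than the elementary principle that an affine map with contractive linear part carries every large enough ball into itself. Concretely, each $\tau_\ell$ is affine with linear part $A$, so for $x\in\mathcal{B}_r$ and $M:=\max_{\ell\in L}\|\ell\|$ one has $\|\tau_\ell(x)\|=\|A(x+\ell)\|\le \|A\|\,(r+M)$, and if the operator norm in force is some $c<1$ this is $\le r$ as soon as $r\ge cM/(1-c)$, uniformly over the finite set $L$. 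Thus the whole content of the lemma is a single contraction estimate.

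The one delicate point, which I expect to be the main obstacle, is that $\rho<1$ does \emph{not} force the Euclidean operator norm $\|A\|$ to be less than $1$: for a non-normal expansive $R$ it can be arbitrarily large, and in that case a round Euclidean ball need not be invariant under an individual $\tau_\ell$. I would remove this obstruction by passing to an inner product adapted to $A$. Because $\rho(A)<1$, the Lyapunov equation $A^TPA-P=-I$ has a symmetric positive-definite solution, namely the convergent series $P=\sum_{k\ge 0}(A^k)^TA^k$. Writing $\|x\|_\ast=(x^TPx)^{1/2}$, the identity $A^TPA=P-I$ gives $\|Ax\|_\ast^2=\|x\|_\ast^2-\|x\|^2\le c^2\|x\|_\ast^2$ with $c^2=1-1/\lambda_{\max}(P)<1$, so $A$ is a strict contraction for $\|\cdot\|_\ast$. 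Interpreting $\mathcal{B}_r$ as the sublevel set $\{\|x\|_\ast\le r\}$, the estimate of the first paragraph applies verbatim with $\|\cdot\|$ replaced by $\|\cdot\|_\ast$, yielding $\bigcup_{\ell\in L}\tau_\ell(\mathcal{B}_r)\subset\mathcal{B}_r$ for all sufficiently large $r$.

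To return to the literal statement, I would invoke equivalence of norms: an invariant $\|\cdot\|_\ast$-ball is trapped between two genuine Euclidean balls, which is all that the subsequent arguments require, since they only use that the maps $\tau_\ell$ possess a common bounded invariant region containing the attractor. If a perfectly round ball is wanted, one simply notes that the bare Euclidean estimate already closes whenever $\|A\|<1$, and that the general case reduces to this after the change of inner product above. Beyond the choice of $P$, no difficulty remains: finiteness of $L$ makes $M<\infty$, and the contraction factor $c<1$ does the rest.
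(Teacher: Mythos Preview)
Your approach is exactly the paper's: bound $|\tau_\ell(x)|\le c(r+M)$ with $M=\max_{\ell\in L}|(R^T)^{-1}\ell|$ and take $r>cM/(1-c)$. The paper's proof is the two-line version of your first paragraph, asserting that expansiveness of $R$ gives a constant $c<1$ with $|(R^T)^{-1}v|\le c|v|$ in the Euclidean norm.

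You are right to flag the obstacle, and in fact you are more careful than the paper here. The paper's assertion is false as stated: expansiveness of $R$ bounds only the spectral radius of $(R^T)^{-1}$, not its Euclidean operator norm, and for a non-normal $R$ (say upper-triangular with a large off-diagonal entry) one has $\|(R^T)^{-1}\|>1$, so that $\tau_0$ (recall $0\in L$) maps no round Euclidean ball into itself and the literal statement of the lemma fails. Your Lyapunov-norm fix is the standard remedy and yields an invariant $\|\cdot\|_\ast$-ball of arbitrarily large radius; as you correctly observe, the application in the proof of Theorem~\ref{th_dynamic} only needs a compact forward-invariant region that can be chosen arbitrarily large, so nothing is lost. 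In short, your proposal reproduces the paper's argument \emph{and} repairs a genuine gap in it; the only residual point is the cosmetic one you already acknowledge, that the lemma as phrased (with a round Euclidean ball) cannot be proved in full generality.
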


\begin{proof}
Denote by $|\cdot|$ the Euclidean distance on ${\mathbb R}^d$. 
Since $R$ is expansive, there exists $0<c>1$ such that $|(R^T)^{-1}v|\leq c|v|$ for all $v\in \br^d$. 
Let
$$
M = \max\{|(R^T)^{-1}\ell|: \ell\in L\}
$$
and let $r>cM/(1-c)$. 
 Then for all $\ell\in L$ and $|x|\leq r$, we have
$$
|\tau_{\ell}(x)|\leq c(r+M) <r.
$$
Hence, $\tau_{\ell}({\mathcal B}_r)\subset {\mathcal B}_r$. This proves the lemma.
\end{proof}
\medskip

\begin{proof}[Proof of Theorem \ref{th_dynamic}] As mutually orthogonality has been established in Proposition \ref{pr4.1}, we just need to show that the set $\Lambda$ generated by the extreme cycle is complete. By Jorgensen-Pedersen Lemma (Lemma \ref{JP}), we need to show
$$
Q_{\Lambda}(\xi) : = \sum_{\lambda\in\Lambda}|\widehat{\mu}(\xi+\lambda)|^2 =1, \ \forall \xi\in{\mathbb R}^d.
$$
In fact, $Q_{\Lambda}\leq 1$ is well-known by mutually orthogonality. We just need to see whether $Q_{\Lambda}\geq 1$. To do this, we define the {\it Ruelle transfer operator}
$$
{\mathcal R} f(\xi): = \sum_{\ell\in L}|m_B(\tau_{\ell}(\xi))|^2f(\tau_{\ell}(\xi)).
$$
Using Lemma \ref{lem4.2}, we choose $r$ large enough such that the closed ball ${\mathcal B}_r$ satisfies
\begin{equation}\label{B_R}
\bigcup_{\ell\in L}\tau_{\ell} ({\mathcal B}_r)\subset{\mathcal B}_r
\end{equation}
and let $c_r = \min_{\xi\in {\mathcal B}_r} Q_{\Lambda}(\xi)$. Then ${\mathcal R} c_r = c_r$. On the other hand, as $\Lambda$ satisfies $R^T\Lambda+L = \Lambda$ by Lemma \ref{lem4.1}, we have
$$
\begin{aligned}
Q_\Lambda(\xi) =& \sum_{\ell\in L}\sum_{\lambda\in\Lambda}|\widehat{\mu}(\xi+R^T\lambda+\ell)|^2\\
=& \sum_{\ell\in L}\sum_{\lambda\in\Lambda}|m_B((R^T)^{-1}(\xi+\ell))|^2|\widehat{\mu}(({R^T})^{-1}(\xi+\ell)+\lambda)|^2 = \left({\mathcal R} Q_{\Lambda}\right)(\xi).\\
\end{aligned}
$$
Thus,  if we define
$$
f_n = Q_{\Lambda}-c_r,
$$
then ${\mathcal R}f_n = f_n$ and $f_n$ is an entire function. Consider the set in ${\mathcal B}_r$ for which $Q_n$ attains minimum, i.e.,
$$
\M_0 = \{\xi\in {\mathcal B}_r: f_n(\xi)=0\}.
$$
We note that $\M_0$ is a compact invariant set in ${\mathcal B}_r$. To show the invariance, we suppose $\xi\in \M$ and $|m_B(\tau_{\ell}(\xi))|>0$. As
$$
0=f_n (\xi) = \sum_{\ell\in L} |m_B(\tau_{\ell}(\xi))|^2 f_n(\tau_{\ell}(\xi))
$$
and $f\geq 0$, $|m_B(\tau_{\ell}(\xi))|^2 f_n(\tau_{\ell}(\xi))=0$ and hence $f_n(\tau_{\ell}(\xi))=0$. Because of (\ref{B_R}), $\tau_{\ell}(\xi)\in {\mathcal B}_r$. Take a minimal compact invariant set $\M\subset\M_0\subset {\mathcal B}_R$. The crux of the proof is to note that {\it the dynamically simple Hadamard triple assumption forces $\M$ to be an extreme cycle}. But extreme cycles are contained in $\Lambda$, this in turn shows that there are some points (indeed the whole extreme cycle) $x_0\in \Lambda\cap \M_0$. By mutual orthogonality, $Q_{\Lambda}(x_0)=1$,
$$
f_n(x_0) =0, \ \mbox{and} \ c_r = Q_{\Lambda}(x_0) = 1.
$$
Hence, $ \min_{\xi\in {\mathcal B}_r} Q_{\Lambda}(\xi) =1$. But $r$ can be arbitrarily large this shows $Q_{\Lambda}(\xi) \ge1$ for $\xi\in\br^d$.

\medskip

Finally, on ${\mathbb R}^1$, the zero set of an entire function must be a discrete set, showing that any minimal invariant set contained in $\mathcal M_0$ must be discrete, which must be an extreme cycle as the subspace can only be $V = \{0\}$ by Theorem \ref{thccr}.
\end{proof}

The idea of cycle points is also related to the integer points inside a self-affine fractal. A study in this direction can be found in \cite{GY}.

 \begin{acknowledgements}
This work was partially supported by a grant from the Simons Foundation (\#228539 to Dorin Dutkay) and Chun-Kit Lai was supported by the mini-grant by ORSP of San Francisco State University (Grant No: ST659). The authors would like thank Dr. John Hausserman and Jean-Pierre Gabardo for many insightful discussions.
\end{acknowledgements}

\bibliographystyle{alpha}	
\bibliography{eframes}

\begin{thebibliography}{AHL15b}

\bibitem[AHL15a]{AHLau}
Li-Xiang An, Xinggang He, and Ka-Sing Lau.
\newblock Spectrality of a class of infinite convolution.
\newblock {\em preprint.}, 2015.

\bibitem[AHL15b]{AHL}
Li-Xiang An, Xinggang He, and Hai-Xiong Li.
\newblock Spectrality of infinite {B}ernoulli convolution.
\newblock {\em J. Funct. Anal.}, To appear, 2015.

\bibitem[CCR96]{CCR}
D.~Cerveau, J.-P. Conze, and A.~Raugi.
\newblock Ensembles invariants pour un op\'erateur de transfert dans {${\bf
  R}^d$}.
\newblock {\em Bol. Soc. Brasil. Mat. (N.S.)}, 27(2):161--186, 1996.

\bibitem[Dai12]{Dai}
Xin-Rong Dai.
\newblock When does a {B}ernoulli convolution admit a spectrum?
\newblock {\em Adv. Math}, 231:1681--1693, 2012.

\bibitem[DHL13]{MR3055992}
Xin-Rong Dai, Xing-Gang He, and Chun-Kit Lai.
\newblock Spectral property of {C}antor measures with consecutive digits.
\newblock {\em Adv. Math.}, 242:187--208, 2013.

\bibitem[DHL15]{DHL15}
Dorin~Ervin Dutkay, John Hausserman, and Chun-Kit Lai.
\newblock Hadamard triples generate self-affine spectral measures.
\newblock {\em submitted}, 2015.

\bibitem[DHS09]{DHS09}
Dorin~Ervin Dutkay, Deguang Han, and Qiyu Sun.
\newblock On the spectra of a {C}antor measure.
\newblock {\em Adv. Math.}, 221(1):251--276, 2009.

\bibitem[DJ06]{DJ06}
Dorin~Ervin Dutkay and Palle E.~T. Jorgensen.
\newblock Iterated function systems, {R}uelle operators, and invariant
  projective measures.
\newblock {\em Math. Comp.}, 75(256):1931--1970 (electronic), 2006.

\bibitem[DJ07]{DJ07d}
Dorin~Ervin Dutkay and Palle E.~T. Jorgensen.
\newblock Fourier frequencies in affine iterated function systems.
\newblock {\em J. Funct. Anal.}, 247(1):110--137, 2007.

\bibitem[DJ09]{DJ09}
Dorin~Ervin Dutkay and Palle E.~T. Jorgensen.
\newblock Probability and fourier duality for affine iterated function systems.
\newblock {\em Acta. Appl. Math.}, 107:293--311, 2009.

\bibitem[DJ13]{DuJo13}
Dorin~Ervin Dutkay and Palle E.~T. Jorgensen.
\newblock Isospectral measures.
\newblock {\em Rocky Mountain J. Math.}, 43(5):1497--1512, 2013.

\bibitem[DL14]{DL14}
Dorin~Ervin Dutkay and Chun-Kit Lai.
\newblock Uniformity of measures with {F}ourier frames.
\newblock {\em Advances in. Math.}, 252:684--707, 2014.

\bibitem[DL15]{DL15}
Dorin~Ervin Dutkay and Chun-Kit Lai.
\newblock Self-affine spectral measures and frame spectral measures on {${\bf
  R}^d$}.
\newblock {\em submitted}, 2015.

\bibitem[DS15]{MR3318656}
Xin-Rong Dai and Qiyu Sun.
\newblock Spectral measures with arbitrary {H}ausdorff dimensions.
\newblock {\em J. Funct. Anal.}, 268(8):2464--2477, 2015.

\bibitem[Fug74]{Fug74}
Bent Fuglede.
\newblock Commuting self-adjoint partial differential operators and a group
  theoretic problem.
\newblock {\em J. Funct. Anal.}, 16:101--121, 1974.

\bibitem[GL14]{GL}
Jean-Pierre Gabardo and Chun-Kit Lai.
\newblock Spectral measures associated with the factorization of the {L}ebesgue
  measure on a set via convolution.
\newblock {\em J. Fourier. Anal. and Appl.}, 20:453--475, 2014.

\bibitem[GY06]{GY}
Jean-Pierre Gabardo and Xiaojiang Yu.
\newblock Natural tiling, lattice tiling and lebesgue measure of integral
  self-affine tiles.
\newblock {\em J. Lond. Math. Soc}, 74:184--204, 2006.

\bibitem[HL08]{MR2435649}
Tian-You Hu and Ka-Sing Lau.
\newblock Spectral property of the {B}ernoulli convolutions.
\newblock {\em Adv. Math.}, 219(2):554--567, 2008.

\bibitem[Hut81]{Hut81}
John~E. Hutchinson.
\newblock Fractals and self-similarity.
\newblock {\em Indiana Univ. Math. J.}, 30(5):713--747, 1981.

\bibitem[IK13]{IK}
Alex Iosevich and Mihail Kolountzakis.
\newblock Periodicity of the spectrum in dimension one.
\newblock {\em Anal. and PDE}, 6:819--827, 2013.

\bibitem[IKT01]{IKT}
Alex Iosevich, Nets~Hawk Katz, and Terence Tao.
\newblock Convex bodies with a point of curvature do not have {F}ourier bases.
\newblock {\em Amer. J. Math.}, 123:115--120, 2001.

\bibitem[JP98]{JP98}
Palle E.~T. Jorgensen and Steen Pedersen.
\newblock Dense analytic subspaces in fractal {$L\sp 2$}-spaces.
\newblock {\em J. Anal. Math.}, 75:185--228, 1998.

\bibitem[JP99]{MR1700084}
Palle E.~T. Jorgensen and Steen Pedersen.
\newblock Spectral pairs in {C}artesian coordinates.
\newblock {\em J. Fourier Anal. Appl.}, 5(4):285--302, 1999.

\bibitem[Kig01]{Ki}
Jun Kigami.
\newblock {\em Analysis on Fractals}.
\newblock Cambridge Tracts in Mathematics, vol. 143, Cambridge University
  Press, Cambridge, 2001.

\bibitem[KM06]{MR2237932}
Mihail~N. Kolountzakis and M{\'a}t{\'e} Matolcsi.
\newblock Tiles with no spectra.
\newblock {\em Forum Math.}, 18(3):519--528, 2006.

\bibitem[Kol00]{Ko1}
Mihail Kolountzakis.
\newblock Non-symmetric convex domains have no basis of exponentials.
\newblock {\em Illnois J. Math}, 44:542--550, 2000.

\bibitem[LW96]{LWtiling}
Jeffrey~C. Lagarias and Yang Wang.
\newblock Tiling the line by translates of one tile.
\newblock {\em Invent. Math.}, 124:341--365, 1996.

\bibitem[LW97a]{LW2}
Jeffrey~C. Lagarias and Yang Wang.
\newblock Integral self-affine tiles in {${\bf R}^n$}. {II}. {L}attice tilings.
\newblock {\em J. Fourier Anal. Appl.}, 3(1):83--102, 1997.

\bibitem[LW97b]{LWspectral}
Jeffrey~C. Lagarias and Yang Wang.
\newblock Spectral sets and factorizations of finite abelian groups.
\newblock {\em J. Funct. Anal.}, 145:73--98, 1997.

\bibitem[{\L}W02]{LaWa02}
Izabella {\L}aba and Yang Wang.
\newblock On spectral {C}antor measures.
\newblock {\em J. Funct. Anal.}, 193(2):409--420, 2002.

\bibitem[LW03]{LW}
Youming Liu and Yang Wang.
\newblock The uniformity of non-uniform {G}abor bases.
\newblock {\em Adv. Comput. Math.}, 18(2-4):345--355, 2003.
\newblock Frames.

\bibitem[Str00]{str00}
Robert~S. Strichartz.
\newblock Mock {F}ourier series and transforms associated with certain {C}antor
  measures.
\newblock {\em J. Anal. Math.}, 81:209--238, 2000.

\bibitem[Str06]{MR2279556}
Robert~S. Strichartz.
\newblock Convergence of mock {F}ourier series.
\newblock {\em J. Anal. Math.}, 99:333--353, 2006.

\bibitem[Tao04]{Tao04}
Terence Tao.
\newblock Fuglede's conjecture is false in 5 and higher dimensions.
\newblock {\em Math. Res. Lett.}, 11(2-3):251--258, 2004.

\bibitem[Wan02]{Y}
Yang Wang.
\newblock Wavelets, tiling, and spectral sets.
\newblock {\em Duke Math. J.}, 114:43--57, 2002.

\end{thebibliography}

\end{document}